\numberwithin{equation}{section}
\newcommand{\beq}{\begin{equation}}
	\newcommand{\enq}{\end{equation}}
\newtheorem{Theorem}{Theorem}[section]
\newtheorem{Lemma}[Theorem]{Lemma}
\newtheorem{Definition}[Theorem]{Definition}
\newtheorem{Remark}[subsection]{Remark}
\newcommand{\benu}{\begin{enumerate}}
	\newcommand{\beqa}{\begin{eqnarray}}
		\newcommand{\beqan}{\begin{eqnarray*}}
			\newcommand{\eay}{\end{array}}
		\newcommand{\edm}{\end{displaymath}}
	\newcommand{\eenu}{\end{enumerate}}
\newcommand{\eeq}{\end{equation}}
\newcommand{\eeqa}{\end{eqnarray}}
\newcommand{\eeqan}{\end{eqnarray*}}
\newcommand{\br}{\begin{Remark}}
\newcommand{\er}{\end{Remark}}
\newcommand{\bqa}{\begin{eqnarray}}
\newcommand{\eqa}{\end{eqnarray}}
\newcommand{\bqw}{\begin{eqnarray*}}
\newcommand{\eqw}{\end{eqnarray*}}
\newcommand{\bea}{\begin{array}{cc}}
\newcommand{\ena}{\end{array}}
\begin{document}
\begin{center}
{\large \bf
Regularity of pullback attractors for nonclassical diffusion equations with delay}\\
\vspace{0.20in}Yuming Qin$^{1,\ast}$ $\ $ Qitao Cai$^{2}$ $\ $ Ming Mei$^{3}$ $\ $ Ke Wang$^{4}$\\
\end{center}
$^{1}$ Department of  Mathematics, Institute for Nonlinear Sciences, Donghua University, Shanghai 201620, P. R. China.\\
E-mails: yuming$\_$qin@hotmail.com, yuming@dhu.edu.cn\\
$^{2}$ Department of  Mathematics,  Donghua University, Shanghai 201620, P. R. China.\\
E-mails: mrccc2017@163.com\\
$^{3}$ Department of Mathematics, Champlain College Saint-Lambert, Quebec, J4P 3P2, Canada.\\
Department of Mathematics and Statistics, McGill University, Montreal, Quebec, H3A 2K6, Canada.\\
E-mails: ming.mei@mcgill.ca\\
$^{4}$ Department of  Mathematics, Institute for Nonlinear Sciences, Donghua University, Shanghai 201620, P. R. China.\\
E-mails: kwang@dhu.edu.cn
\vspace{3mm}

\begin{abstract}
In this paper, we mainly study the regularity of pullback $\mathcal{D}$-attractors for a nonautonomous nonclassical diffusion equation with delay term $b(t,u_t)$ which contains some hereditary characteristics. Under a critical nonlinearity $f$, a time-dependent force $g(t,x)$ with exponential growth and a delayed force term $b(t,u_t)$, we prove that there exists a pullback $\mathcal{D}$-attractor $\mathcal{A}=\{A(t):t \in \mathbb{R}\}$ in $\mathbb{K}^1=H_0^1(\Omega) \times L^2((-h,0);L^2(\Omega))$ to problem \eqref{ine01} and for each $t \in \mathbb{R}$, $A(t)$ is bounded in $\mathbb{K}^2=H^2(\Omega) \cap H_0^1(\Omega) \times L^2((-h,0);L^2(\Omega))$.
\end{abstract}
\vspace{3mm} \hspace{4mm}{\bf Key words:} Nonclassical diffusion equation, regularity, pullback attractors, delay \\
\vspace{3mm}\hspace{2mm} {\bf AMS Subject Classifications}: 35K57; 35B65; 35B41; 34K99;

\section{Introduction}
\setcounter{equation}{0}
\let\thefootnote\relax\footnote{*Corresponding author. E-mails: yuming$\_$qin@hotmail.com, yuming@dhu.edu.cn}

	In  this paper, we consider the following nonclassical diffusion equation with delay:
\begin{equation}\label{ine01} 
	\begin{cases}
		\partial_t u(t,x)- \Delta \partial_t u(t,x) -\Delta u(t,x)+f(u(t,x))=b(t,u_t)(x)+g(t,x) & in \  (\tau,\infty)\times\Omega,\\
		u=0 & on \ (\tau,\infty)\times\partial\Omega,\\
		u(\tau,x)=u^0(x)&\tau \in \mathbb{R},x \in \Omega,\\
		u(\tau+\theta,x)=\phi(\theta,x) & \theta \in(-h,0),x \in \Omega,
	\end{cases}
\end{equation}
where $\Omega \subset \mathbb{R}^3$ is a bounded domain with smooth boundary $\partial \Omega $. $\tau \in \mathbb{R}$, $u^0 \in H_0^1(\Omega)$ is the initial condition at $\tau$ and $\phi \in L^2(\left[-h,0\right];L^2(\Omega))$ is also the initial condition in $\left[\tau-h,\tau\right]$, $h(>0)$ is the length of the delay effects. Moreover, for each $t \geq \tau$, we denote by $u_t$ the function defined in $\left[-h,0\right]$ by $u_t(\theta)=u(t+\theta), \,\theta \in \left[-h,0\right]$. The nonlinearity $f$ and the external forces $b$ and $g$ meet certain criteria later.

We assume the following conditions to hold (cf. \cite{hhmm1,wzl}):\\
$\left(\mathbf{H1}\right)$ The nonlinear term  $f\in C^1(\mathbb{R},\mathbb{R})$ with $f(0)=0$, satisfies the following growth and dissipation conditions:
\begin{equation}\label{1eq01}
	\left| f(u) - f(v)\right| \leq C \left| u-v \right| (1+\left|u\right|^4 + \left| v\right|^4),
\end{equation}
\begin{equation}\label{1eq02}
		\lim_{\left| u \right| \to \infty} \inf  \frac{f(u)}{u} > -\lambda_1. 
\end{equation}

According to \eqref{1eq01} and \eqref{1eq02}, it can be inferred that $f(u)$ satisfies the following inequalities: 
\begin{equation}
	f(u)u \geq-\mu u^2-C_1,
\end{equation}
\begin{equation}
	f'(u)  \geq -l,
\end{equation}
\begin{equation}\label{1eq07}
	\left| f(u) \right| \leq C_2(1+\left| u \right|^5),
\end{equation}
\begin{equation} \label{1eq03}
	\lim_{\left| u \right| \to \infty} \inf  \frac{uf(u)-kF(u)}{u^2} \geq0,
\end{equation}
\begin{equation} \label{1eq04}
	\lim_{\left| u \right| \to \infty} \inf  \frac{F(u)}{u^2} \geq 0,
\end{equation}
where $\mu$, $l$, $k$, $C_1$, $C_2$ are positive constants, $\lambda_1 >0$ is the first eigenvalue of $-\Delta$ in $\Omega$ with the homogeneous Dirichlet condition such that $\lambda_1 > \max \left\{ \mu, l \right\}$.

Let us denote by
\begin{equation*}
	F(u):=\int_{0}^{u} f(s) {\rm d}s.
\end{equation*}

We infer from equation \eqref{1eq03} and \eqref{1eq04} that for any $ \delta >0$, there exist positive constants $C_{\delta 1},C_{\delta 2}$ such that
\begin{equation} \label{1eq05}
	uf(u)-kF(u)+\delta u^2 + C_{\delta 1} \geq 0 \quad \forall u \in \mathbb{R},
\end{equation}
\begin{equation}\label{1eq06}
	F(u)+ \delta u^2 +C_{\delta 2} \geq 0 \quad \forall u \in \mathbb{R}.
\end{equation}
$\left(\mathbf{H2}\right)$ 	In terms of the time-dependent external force term without delay, we assume that $L^2_{loc}(\mathbb{R}; \mathbb{X})$ is the space of functions $g(s)$, $s\in \mathbb{R}$ with values in $\mathbb{X}$ and locally square integrable in time, that is,
\begin{equation*}
	\int_{a}^{b} \left\| g(s) \right\|^2_{\mathbb{X}} ds < \infty, \quad \forall \left[a,b\right] \subset \mathbb{R}.
\end{equation*}
$\left(\mathbf{H3}\right)$  We assume that the operator $b(\cdot,\cdot)$ is well-defined as $b: \mathbb{R} \times L^2((-h,0);L^2(\Omega)) \to L^2(\Omega)$ which is a time-dependent external force with delay, and it satisfies:
\begin{flushleft}
	(\uppercase \expandafter{\romannumeral 1}) for all $\phi \in L^2((-h,0);L^2(\Omega))$, the function $t \in \mathbb{R} \mapsto b(t,\phi) \in L^2(\Omega)$ is measurable;\\
	(\uppercase \expandafter{\romannumeral 2}) $b(t,0)=0$ for all $t \in \mathbb{R}$;\\
	(\uppercase \expandafter{\romannumeral 3}) there exists a constant $L_b >0$ such that for all $t \in \mathbb{R}$ and $\phi_1, \phi_2 \in L^2((-h,0);L^2(\Omega))$,
	\begin{equation*}
		\left \| b(t,\phi_1)-b(t,\phi_2) \right \| \leq L_b \left \| \phi_1 - \phi_2 \right \|_{L^2((-h,0);L^2(\Omega))};
	\end{equation*}
	(\uppercase \expandafter{\romannumeral 4}) there exists a constant $C_b>0$ such that for all $t \geq \tau$, and all $u,v \in L^2(\left[\tau-h,t\right];L^2(\Omega))$,
	\begin{equation*}
		\int_{\tau}^{t} \left \| b(s,u_s)-b(s,v_s) \right \|^2  {\rm d}s \leq C_b \int_{\tau -h}^{t} \left \|  u(s)-v(s) \right \|^2 {\rm d}s;
	\end{equation*}
	(\uppercase \expandafter{\romannumeral 5}) for all $t \geq \tau$, and all $u,v \in L^2(\left[\tau-h,t\right];L^2(\Omega))$, $b(t,u_t)$ also satisfies
	\begin{equation*}
		\int_{\tau}^{t} e^{\sigma s} \left \| b(s,u_s)-b(s,v_s) \right \|^2  {\rm d}s \leq C_b \int_{\tau -h}^{t} e^{\sigma s} \left \|  u(s)-v(s) \right \|^2 {\rm d}s.
	\end{equation*}
\end{flushleft}
\begin{Remark}
	From above assumptions, for $T> \tau$, the function $t \in \mathbb{R} \mapsto b(t,\phi) \in L^2(\Omega)$ is measurable and belongs to $L^{\infty}((\tau ,T);L^2(\Omega))$.
\end{Remark}

Many scholars have studied the nonclassical reaction diffusion equation, which was developed as a model to describe physical phenomena such as fluid mechanics, solid mechanics, non-Newtonian flows, and heat conduction theory (\cite{aec,cpj}). This type of equation has been widely used in physics, chemistry, biology, and other fields in recent years. Meanwhile, the long-time behavior and well-posedness of the solutions of the nonclassical reaction diffusion equation have been investigated (\cite{at,wpw,ctmm,scy1,ws,zkx1}, etc.). 

It is worth highlighting that the existence and regularity of attractors of related dynamic equations have been examined by numerous researcher under distinct settings. The authors in \cite{whq,zwg,mwx} have established the regularity of global attractors for the nonclassical diffusion equations. With reference to the pullback attractors, the existence as well as regularity for a wide class of non-autonomous, fractional, nonclassical diffusion equations with $(p,q)$-growth nonlinearities defined on unbounded domains have been obatained in \cite{wlw}. The regularity for a nonautonomous nonclassical diffusion equation with critical nonlinearity have been proved in \cite{wzl} and the existence pullback attractors for a nonautonomous nonclassical diffusion equation with delay have been proved in \cite{hhmm1}. But, the regularity of pullback attractors for a nonautonomous nonclassical diffusion equation with delay was not studied in \cite{hhmm1} and \cite{wzl}. Thus it is natural to study this problem in this paper.

The main aim in this paper is to prove the regularity of pullback $\mathcal{D}$-attractors $\mathcal{A}=\{A(t):t \in \mathbb{R}\}$ for problem \eqref{ine01}.  Particularly, we will prove that there exists a pullback $\mathcal{D}$-attractors $\mathcal{A}=\{A(t):t \in \mathbb{R}\}$ in $\mathbb{K}^1=H_0^1(\Omega) \times L^2((-h,0);L^2(\Omega))$ by weaks solutions of problem \eqref{ine01}. More specifically, $A(t)$ is bounded in $\mathbb{K}^2=H^2(\Omega) \cap H_0^1(\Omega) \times L^2((-h,0);L^2(\Omega))$ for each $t \in \mathbb{R}$. In order to obatin the regularity of pullback arrtactors for problem \eqref{ine01}, by extending the methods in \cite{wzl,cp,cm,fgmz,ze}, we split the solution of the problem into two parts: the first part satisfies exponentially decay by employing priori estimation techniques, and the second part satisfies suitable asymptotic behavior in some phase space with higher regularity by giving the operator $-\Delta$ a proper fractional power. Because of the term $-\Delta \partial_t u$, we know that if the intial datas belong to $H_0^1(\Omega)$, the solution for problem \eqref{ine01} is always in $H_0^1(\Omega)$ and has no higher regularity, which is similar to the hyperbolic equation (\cite{hhmm1,wzl}). Furthermore, the time-delay term forces us to consider the past history of the solution. To overcome these difficulties casued by the term $-\Delta \partial_t u$ and time-delay term, by generalizing the methods in \cite{hhmm1} and \cite{wzl}, we verify Theorem \ref{Th42}(see below).

The structure of this paper is the following. In Section \ref{sec2}, we set some notations and recall some definitions and abstract results on pullback $\mathcal{D}$-attractors and regularity (see \cite{hhmm1,lz,wzl}). In Section \ref{sec3}, we prove the existence of pullback $\mathcal{D}$-attractors in $H_0^1(\Omega) \times L^2((-h,0);L^2(\Omega))$ to problem \eqref{ine01}. Finally, throughout Section \ref{sec4}, we obtain the regularity of pullback $\mathcal{D}$-attractors. 

\section{Preliminaries}\label{sec2}
	Throughout this section, we  recall some fundamental concepts and results that will be the ingredients of this work.
	
	For convenience, throughout this paper, let $\left| u \right|$ be the modular (or absolute value) of $u$. Let $\left \| \cdot \right \|$ and $\langle \cdot,\cdot \rangle$ denote the norm and inner product in $L^2(\Omega)$, respectively; in the same way, let $\left \| \nabla \cdot \right \|$ and $ \langle \nabla \cdot,\nabla \cdot \rangle$ denote the norm and inner product in $H_0^1(\Omega)$, respectively. The norm in the Banach space $\mathbb{B}$  will be denoted by $\left \| \cdot \right \|_\mathbb{B}$. $C$ means any generic positive constant, which may be different from line to line and even in the same line. Let $\mathbb{X}$ be a complete metric space with metric $d$. Denote by $\mathcal{P} (\mathbb{X})$ the famliy of all non-empty subsets of $\mathbb{X}$, and suppose $\mathcal{D}$ is a non-empty class of parameterized sets $\widehat{D}=\{ D(t):t \in \mathbb{R}\} \subset \mathcal{P} (\mathbb{X})$.  For any $R>0$, $\overline{\mathbf{B}}_{\mathbb{Y}} \left(0,R\right)$ denotes the closed ball in $\mathbb{Y}$ centered at $0$ with radius $R$. In particular, denote by $\overline{\mathbf{B}}_{\mathbb{Y}} \left(R\right)=\overline{\mathbf{B}}_{\mathbb{Y}} \left(0,R\right)$ when the center is 0.

	Let $A:= -\Delta$ wiht the domain $D(A)=H^2(\Omega) \cap H^1_0(\Omega)$. We consider a family of (compactly) nested Hilbert spaces 
	\begin{equation*}
	\boldsymbol{E}^{\kappa}:=D(A^{\kappa /2}), \kappa \in \mathbb{R}, 
	\end{equation*}
	with inner products and norms given by 
	\begin{equation*}
	\langle u,v \rangle_{\kappa} = \langle A^{\kappa/2}u, A^{\kappa/2}v \rangle \  \text{and} \ \left\| u \right\|_{\kappa} =\| A^{\kappa/2}u \|.
	\end{equation*}
	In particular, $\boldsymbol{E}^0 =L^2(\Omega)$, $\boldsymbol{E}^1= H^1_0(\Omega)$ and $\boldsymbol{E}^2=H^2(\Omega) \cap H^1_0(\Omega)$.
	
	Therefore, we have the continuous embeddings $\boldsymbol{E}^{\kappa_1} \hookrightarrow \boldsymbol{E}^{\kappa_2}$ for any $\kappa_1>\kappa_2$,
	\begin{equation*}
	\boldsymbol{E}^{\kappa_1} \hookrightarrow L^{\frac{6}{3-2\kappa_1}}(\Omega), \quad \forall \kappa_1 \in \left[0,\frac{3}{2}\right),
	\end{equation*}
	and the following inequalities hold true:\\
	\textbf{Interpolation inequalities}: if $r=\nu s+(1-\nu)q$, where $r,s,q \in \mathbb{R}$, $s\geq q$ and $\nu \in \left[0,1\right]$, then there exists a constant $C>0$ such that
	\begin{equation*}
	\left\|u \right\|_r \leq C \left\| u \right\|^{\nu}_s \left\| u \right\|^{1-\nu}_q, \quad \forall u \in \boldsymbol{E}^s.
	\end{equation*}
	\textbf{Generalized Poincar\'e inequalities}:
	\begin{equation*}
		\lambda_1 \left\| u \right\|^2_{\kappa} \leq \left\| u \right\|^2_{\kappa+1}, \quad \forall u \in \boldsymbol{E}^{\kappa+1},
	\end{equation*}
	where $\lambda_1>0$ is the first eigenvalue of $A$.
	\begin{Definition}(\cite{hhmm1,lw})
	A two parameter family of mapping $U(t,\tau):\mathbb{X} \rightarrow \mathbb{X}$, $t \geq \tau$, $\tau \in \mathbb{R}$, is called to be a norm-to-weak continuous process if \\
	(1) $U(\tau,\tau)x=x$ for all $\tau \in \mathbb{R}$, $x \in \mathbb X$;\\
	(2) $U(t,s)U(s,\tau)x=U(t,\tau)x$ for all $t \geq s \geq \tau$, $\tau \in \mathbb{R}$, $x \in \mathbb{X}$;\\
	(3) $U(t,\tau)x_n \rightharpoonup U(t,\tau)x$ if $x_n \to x$ in $\mathbb{X}$.
	\end{Definition}
\begin{Definition}(\cite{hhmm1,lz})
	A family of bounded set $\widehat{B}=\{B(t):t \in \mathbb{R}\} \in \mathcal{D}$ is called pullback $\mathcal{D}$-absorbing for the process $\{U(t,\tau)\}$ if for any $t \in \mathbb{R}$ and for any $\widehat{D} \in \mathcal{D}$, there exists $\tau_0(t,\widehat{D}) \leq t$ such that
	\[U(t,\tau)D(\tau) \subset B(t) \quad \text{for all} \ \tau \leq \tau_0(t,\widehat{D}).\]
\end{Definition}
\begin{Definition}(\cite{hhmm1,lz})
	A family $\widehat{A}=\{A(t):t \in \mathbb{R}\} \subset \mathcal{P} (\mathbb{X})$ is said to be a pullback $\mathcal{D}$-attractor for $\{U(t,\tau)\}$ if\\
	(1) $A(t)$ is compact for all $t \in \mathbb{R}$;\\
	(2) $\widehat{A}$ is invariant; i.e., $U(t,\tau)A(\tau)=A(t)$, for all $t \geq \tau$;\\
	(3) $\widehat{A} $ is pullback $\mathcal{D}$-attracting; i.e.,
	\[\lim_{\tau \to -\infty} dist_{\mathbb{X}}(U(t,\tau)D(\tau),A(t))=0,\]
	for all $ \widehat{D} \in \mathcal{D}$ and all $t \in \mathbb{R}$;\\
	(4) If $\{C(t):t \in \mathbb{R}\}$ is another family of closed attracting sets, then $A(t) \subset C(t)$ for all $t \in \mathbb{R}$.
\begin{Remark}
		$dist_{\mathbb{X} }(A,B)$ denotes the Hausdorff semidistance between $A$ and $B$ which are bounded sets of $\mathbb X$, that is, $dist_{\mathbb{X}}(A,B)=\underset{x \in A}{sup} \ \underset{y \in B}{inf} \left\| x-y \right\|_{\mathbb X}$.
\end{Remark}
\end{Definition}
\begin{Definition}(\cite{wzl})
	A family  $\widehat{B}=\{B(t):t \in \mathbb{R}\} $ of bounded subsets is called:
	\begin{itemize}
	\item[i)] pullback absorbing, if for any $t\in \mathbb{R}$ and any bounded set $D \subset \mathbb{X}$, there exists $\tau_0(t,\widehat{B}) \leq t$ such that 
	\begin{equation*}
		U(t,\tau)D \subseteq B(t), \quad \forall \tau \leq \tau_0(t,\widehat{B});
	\end{equation*}
	\item[ii)] pullback $\mathcal{D}$-absorbing, if for any $t\in \mathbb{R}$, there exists $\tau_0(t,\widehat{B}) < t$ such that
	\begin{equation*}
		U(t,\tau)B(\tau) \subseteq B(t), \quad \forall \tau \leq \tau_0(t,\widehat{B});
	\end{equation*}
	\item[iii)] semi-uniformly pullback $\mathcal{D}$-absorbing, if for any $t\in \mathbb{R}$, there exists $\tau_0(t,\widehat{B}) < t$ such that
	\begin{equation*}
		U(s,\tau)B(\tau) \subseteq B(s), \quad \forall \tau \leq \tau_0(t,\widehat{B}), s\leq t.
	\end{equation*}
	\end{itemize}
\end{Definition}
\begin{Theorem}(\cite{hhmm1,lz})\label{Th27}
	Let  $\{U(t,\tau)\}$ be a norm-to-weak continuous process such that  $\{U(t,\tau)\}$ is pullback $\omega$-$\mathcal{D}$-limit compact. If there exists a family of pullback $\mathcal{D}$-absorbing sets $\widehat{B}=\{B(t):t \in \mathbb{R}\} \in \mathcal{D}$ for the preocess $\{U(t,\tau)\}$, then there exists a pullback $\mathcal{D}$-attractor $\{A(t):t \in \mathbb{R}\}$ such that 
	\begin{equation*}
		A(t)=\omega(\widehat{B},t)=\bigcap_{s \leq t} \overline{\bigcup_{\tau \leq s} U(t,\tau)B(\tau)}.
	\end{equation*} 
\end{Theorem}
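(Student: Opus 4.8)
To prove Theorem \ref{Th27}, the plan is the classical construction of the pullback $\mathcal{D}$-attractor as the pullback $\omega$-limit set of the absorbing family, adapted to the norm-to-weak setting; one then checks in turn compactness, invariance, the attraction property and minimality, the invariance step being the only place where the merely norm-to-weak continuity of the process forces a genuine compactness argument. Concretely, I would set
\[
A(t):=\omega(\widehat{B},t)=\bigcap_{s\le t}\overline{\bigcup_{\tau\le s}U(t,\tau)B(\tau)},\qquad t\in\mathbb{R},
\]
and use the characterisation, valid under the present hypotheses, that $y\in\omega(\widehat{B},t)$ if and only if there exist $\tau_n\to-\infty$ and $x_n\in B(\tau_n)$ with $U(t,\tau_n)x_n\to y$ in $\mathbb{X}$. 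First I would invoke the pullback $\omega$-$\mathcal{D}$-limit compactness — in the form that the Kuratowski measure of noncompactness of $\bigcup_{\tau\le s}U(t,\tau)B(\tau)$ tends to $0$ as $s\to-\infty$ — so that the nested closed sets $\overline{\bigcup_{\tau\le s}U(t,\tau)B(\tau)}$ shrink to a nonempty compact set in the Hausdorff metric; this gives that $A(t)$ is nonempty and compact (property (1)) and that $A(t)$ pullback-attracts $\widehat{B}$. Since $\widehat{B}\in\mathcal{D}$ is pullback $\mathcal{D}$-absorbing, for any $\widehat{D}\in\mathcal{D}$ the cocycle identity $U(t,\tau_n)x_n=U(t,s)U(s,\tau_n)x_n$ with $s\le t$ fixed and $\tau_n\le\tau_0(s,\widehat{D})$ shows $\omega(\widehat{D},t)\subseteq A(t)$, and a routine contradiction argument then yields $\operatorname{dist}_{\mathbb{X}}(U(t,\tau)D(\tau),A(t))\to0$ as $\tau\to-\infty$, which is property (3).

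Next I would establish invariance, $U(t,\tau)A(\tau)=A(t)$ for all $t\ge\tau$. For ``$\subseteq$'', take $z\in A(\tau)$ and write $z=\lim_n U(\tau,\tau_n)x_n$ with $x_n\in B(\tau_n)$, $\tau_n\to-\infty$; by the cocycle property $U(t,\tau_n)x_n=U(t,\tau)U(\tau,\tau_n)x_n$, and since the process is norm-to-weak continuous, $U(t,\tau)U(\tau,\tau_n)x_n\rightharpoonup U(t,\tau)z$. On the other hand $\{U(t,\tau_n)x_n\}$ is precompact by $\omega$-$\mathcal{D}$-limit compactness at time $t$, so a subsequence converges in norm to some $w\in\omega(\widehat{B},t)=A(t)$; uniqueness of weak limits forces $U(t,\tau)z=w\in A(t)$. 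For ``$\supseteq$'', take $y\in A(t)$ with $y=\lim_n U(t,\tau_n)x_n$; the sequence $\{U(\tau,\tau_n)x_n\}$ is precompact by $\omega$-$\mathcal{D}$-limit compactness at time $\tau$, so along a subsequence $U(\tau,\tau_n)x_n\to z\in A(\tau)$ in norm, whence $U(t,\tau)U(\tau,\tau_n)x_n\rightharpoonup U(t,\tau)z$ by norm-to-weak continuity; comparing with $U(t,\tau_n)x_n\to y$ and using uniqueness of weak limits gives $y=U(t,\tau)z\in U(t,\tau)A(\tau)$. This proves property (2); $A(t)$ is closed as an intersection of closed sets.

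For minimality (property (4)), if $\widehat{C}=\{C(t):t\in\mathbb{R}\}$ is any family of closed pullback-attracting sets, then since $\widehat{B}\in\mathcal{D}$, for $y=\lim_n U(t,\tau_n)x_n\in A(t)$ we have
\[
\operatorname{dist}_{\mathbb{X}}(U(t,\tau_n)x_n,C(t))\le\operatorname{dist}_{\mathbb{X}}(U(t,\tau_n)B(\tau_n),C(t))\xrightarrow[n\to\infty]{}0,
\]
so $\operatorname{dist}_{\mathbb{X}}(y,C(t))=0$ and, $C(t)$ being closed, $y\in C(t)$; hence $A(t)\subseteq C(t)$. The only genuinely delicate point, which I expect to be the main obstacle, is the invariance step: because $U(t,\tau)$ is only norm-to-weak continuous one may not pass norm limits through it, so at each use one must first extract a norm-convergent subsequence from the pullback $\omega$-$\mathcal{D}$-limit compactness hypothesis and then identify its norm limit with the weak limit $U(t,\tau)z$ by uniqueness of weak limits; getting this bookkeeping right, and making sure the $\omega$-limit compactness is applied in the precise form stated in \cite{hhmm1,lz}, is where the care is needed.
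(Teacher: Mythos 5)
Your proposal is essentially correct: the paper itself offers no proof of Theorem \ref{Th27} (it is quoted from \cite{hhmm1,lz}), and your argument reproduces the standard proof from those references — defining $A(t)=\omega(\widehat{B},t)$, using the measure-of-noncompactness form of pullback $\omega$-$\mathcal{D}$-limit compactness to get nonemptiness, compactness and attraction of $\widehat{B}$ (hence of every $\widehat{D}\in\mathcal{D}$ via the absorbing property), and proving invariance by first extracting norm-convergent subsequences from asymptotic compactness and then identifying these limits with the weak limits provided by norm-to-weak continuity. I see no substantive gap; the sequential characterisation of $\omega(\widehat{B},t)$, the minimality argument and the bookkeeping in the invariance step are exactly as in the cited sources.
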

\begin{Definition}(\cite{wq1})
	Let $\widehat{D}=\{D(t)\}_{t \in \mathbb{R}} $ be a family of bounded subsets in $\mathbb{X}$. A process $U(\cdot,\cdot)$ is said to be pullback $\mathcal{D}$-asymptotically compact, if for any $t \in \mathbb{R}$, any sequence $\tau_n \to -\infty$ and $x_n \in D(\tau_n)$, the sequence $\{U(t,\tau_n)x_n\}_{n \in \mathbb{N}}$ is precompact in $\mathbb X$.
\end{Definition}
\begin{Lemma}(\cite{wq2})
	$U(\cdot,\cdot)$ is pullback $\mathcal{D}$-asymptotic compact $\Longleftrightarrow$ $U(\cdot,\cdot)$ is pullback $\omega$-$\mathcal{D}$-limit compact.
\end{Lemma}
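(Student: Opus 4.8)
The plan is to prove the two implications separately, with the Kuratowski measure of noncompactness $\alpha(\cdot)$ as the common tool. Recall that $\{U(t,\tau)\}$ is pullback $\omega$-$\mathcal{D}$-limit compact if for every $t\in\mathbb{R}$, every $\widehat{D}\in\mathcal{D}$ and every $\varepsilon>0$ there is $\tau_0=\tau_0(t,\widehat{D},\varepsilon)\le t$ with $\alpha\big(\bigcup_{\tau\le\tau_0}U(t,\tau)D(\tau)\big)\le\varepsilon$. I would first record the standard properties of $\alpha$ on the complete metric space $\mathbb{X}$: (i) $\alpha(B)=0$ iff $\overline{B}$ is compact; (ii) monotonicity; (iii) $\alpha(B_1\cup B_2)=\max\{\alpha(B_1),\alpha(B_2)\}$, so $\alpha$ is unchanged under adjoining finitely many points; (iv) if $\alpha(A)>\varepsilon$ then for any finite $F\subset\mathbb{X}$ one still has $\alpha\big(A\setminus\bigcup_{p\in F}B(p,\varepsilon/2)\big)=\alpha(A)>\varepsilon$, since $A\cap\bigcup_{p\in F}B(p,\varepsilon/2)$ is covered by $|F|$ sets of diameter $\le\varepsilon$; in particular $A\setminus\bigcup_{p\in F}B(p,\varepsilon/2)\ne\emptyset$.

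For the implication pullback $\omega$-$\mathcal{D}$-limit compact $\Rightarrow$ pullback $\mathcal{D}$-asymptotically compact, fix $t\in\mathbb{R}$, a sequence $\tau_n\to-\infty$, a family $\widehat{D}\in\mathcal{D}$ and points $x_n\in D(\tau_n)$; put $y_n=U(t,\tau_n)x_n$. Given $k\in\mathbb{N}$, choose $\sigma_k\le t$ with $\alpha\big(\bigcup_{\tau\le\sigma_k}U(t,\tau)D(\tau)\big)\le 1/k$. Since $\tau_n\to-\infty$ there is $N_k$ with $\tau_n\le\sigma_k$ for $n\ge N_k$, so $\{y_n:n\ge N_k\}\subset\bigcup_{\tau\le\sigma_k}U(t,\tau)D(\tau)$ has Kuratowski measure $\le 1/k$; adjoining the finitely many points $y_1,\dots,y_{N_k-1}$ does not change this, hence $\alpha(\{y_n:n\ge1\})\le 1/k$. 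Letting $k\to\infty$ gives $\alpha(\{y_n\})=0$, so $\overline{\{y_n\}}$ is compact and $\{y_n\}$ is precompact.

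For the converse, I would argue by contradiction: if $\{U(t,\tau)\}$ is not pullback $\omega$-$\mathcal{D}$-limit compact, there exist $t\in\mathbb{R}$, $\widehat{D}\in\mathcal{D}$ and $\varepsilon_0>0$ such that $\alpha(\gamma_n)>\varepsilon_0$ for all $n$, where $\gamma_n:=\bigcup_{\tau\le-n}U(t,\tau)D(\tau)$ (for $n$ large enough that $-n\le t$), and $\gamma_1\supset\gamma_2\supset\cdots$. Using property (iv), build $\{y_k\}$ inductively: pick $y_1\in\gamma_1$; given $y_1,\dots,y_{k-1}$, since $\alpha(\gamma_k)>\varepsilon_0$ and $\{y_1,\dots,y_{k-1}\}$ is finite, the set $\gamma_k\setminus\bigcup_{i<k}B(y_i,\varepsilon_0/2)$ is nonempty, so choose $y_k$ in it. Then $\|y_i-y_j\|\ge\varepsilon_0/2$ for all $i\ne j$, so $\{y_k\}$ has no convergent subsequence. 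On the other hand each $y_k=U(t,\sigma_k)\xi_k$ with $\sigma_k\le-k$ and $\xi_k\in D(\sigma_k)$; since $\sigma_k\to-\infty$, pullback $\mathcal{D}$-asymptotic compactness forces $\{y_k\}$ to be precompact, a contradiction. Hence $\{U(t,\tau)\}$ is pullback $\omega$-$\mathcal{D}$-limit compact, completing the equivalence.

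The main obstacle is the second implication: the delicate points are the inductive extraction of an $\varepsilon_0/2$-separated sequence from the decreasing sets $\gamma_n$ (which relies on property (iv) of $\alpha$) together with the non-autonomous bookkeeping that each extracted point $y_k$ genuinely lies in a fibre $D(\sigma_k)$ with $\sigma_k\to-\infty$, so that the asymptotic compactness hypothesis applies and yields the contradiction. Everything else is a routine transcription of the autonomous fact that $\omega$-limit compactness is equivalent to asymptotic compactness, with $t\to\infty$ replaced by $\tau\to-\infty$ and the final time $t$ held fixed.
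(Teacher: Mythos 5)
Your proof is correct, and it is essentially the standard argument for this equivalence via the Kuratowski measure of noncompactness: the paper itself states the lemma only as a citation of \cite{wq2}, and the argument given there (and in the autonomous analogue for $\omega$-limit compactness) proceeds exactly as you do — monotonicity plus invariance under adjoining finitely many points for one direction, and the inductive extraction of an $\varepsilon_0/2$-separated sequence from the nested sets $\gamma_n$ for the converse. The only cosmetic point is that $\mathbb{X}$ is merely a complete metric space, so the separation should be written $d(y_i,y_j)\ge\varepsilon_0/2$ rather than with a norm.
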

 Applying the ideas in \cite{wzl} to study the pullback asymptotic regularity of processes, we denote by $\mathfrak{I}$ the space of continuous increasing functions $ \xi: \mathbb{R} \to \mathbb{R}^+$, $\mathfrak{D}$ the space of continuous decreasing functions $\gamma:\mathbb{R} \to \mathbb{R}^+$ wiht $\lim\limits_{r \to \infty} \gamma(r)<1$. Let $\widehat{\mathfrak{I}}$ denote the space of functions $\alpha: \mathbb{R} \times \mathbb{R} \to \mathbb{R}^+$, which satisfies that for any $(t,r) \in \mathbb{R} \times \mathbb{R}$, $\alpha(\cdot, r) \in \mathfrak{I}$ and $\alpha( t, \cdot) \in \mathfrak{I}$. Let $\widehat{\mathfrak{D}}$ denote the space of functions $\beta: \mathbb{R} \times \mathbb{R} \to \mathbb{R}^+$, which satisfies that for any $(t,r) \in \mathbb{R} \times \mathbb{R}$, $\beta(\cdot ,r) \in \mathfrak{I}$ and $\beta(t, \cdot) \in \mathfrak{D}$. Let $\mathcal{H}$ and $\mathcal{V}$ be Banach spaces, $\{U(t,\tau) | t\geq \tau\}$ be a family of processes defined in $\mathcal{H}$. Assume $\mathcal{V}$ is continuously embedded into $\mathcal{H}$, such that closed balls in $\mathcal{V}$ are closed in $\mathcal{H}$ as well. Let the family $\widehat{B}=\{B(t)\}_{t \in \mathbb{R}}$ be semi-uniformly pullback $\mathcal{D}$-absorbing in $\mathcal{H}$ and denote by $R_0(t)=\left\| B(t) \right\|_{\mathcal{H}}$ for each $t\in \mathbb{R}$. Then we have the following lemma.
\begin{Lemma}(\cite{wzl})\label{lem28}
	Suppose for every $x\in \mathcal{H}$, there exist two operator $S_x(t,\tau)$ on $\mathcal{H}$ and $P_x(t,\tau)$ on $\mathcal{V}$ satisfy the following assumptions:
	\begin{itemize}
		\item[i)] For any $y \in \mathcal{H}$ and $ z \in \mathcal{V}$ with $y+z=x$,
		\begin{equation*}
			U(t,\tau)x=S_x(t,\tau)y+P_x(t,\tau)z, \quad \forall t \geq \tau ,
		\end{equation*}
		and $S_x(\tau,\tau)y=y$, $P_x(\tau,\tau)z=z$, $\forall \tau \in \mathbb{R}$.
		\item[ii)] There exists $\gamma \in \mathfrak{D}$ such that
		\begin{equation*}
			\sup_{x_{t-r} \in B(t-r)} \left\| S_{x_{t-r}}(t,t-r) y_{t-r} \right\|_{\mathcal{H}} \leq \gamma(r) \left\|y_{t-r} \right\|_{\mathcal{H}}, \quad \forall t\in \mathbb{R}, r \geq 0, y_{t-r} \in \mathcal{H}.
		\end{equation*}
		\item[iii)] There exist $\beta \in \widehat{\mathfrak{D}}$ and $\alpha \in \widehat{\mathfrak{I}}$ such that
		\begin{equation*}
			\sup_{x_{t-r} \in B(t-r)} \left\| P_{xt-r}(t,t-r) z_{t-r} \right\|_{\mathcal{V}} \leq \beta(t,r) \left\| z_{t-r} \right\|_{\mathcal{V}} + \alpha(t,r), \quad \forall t \in \mathbb{R}, r\geq 0. z_{t-r} \in \mathcal{V}.
		\end{equation*}
	\end{itemize}
	Then, for any $t_0 \in \mathbb{R}$, there exist positive constants $r_{\ast}$, $Q_{r_{\ast}}$ and positive function $\rho_{r_{\ast}}(t)$, such that
	\begin{equation*}
		dist_{\mathcal{H}}(U(t,t-r)B(t-r),\overline{\mathbf{B}}_{\mathcal{V}}(\rho_{r_{\ast}}(t))) \leq Q_{r_{\ast}}(\gamma(r_{\ast}))^{\frac{r}{r_{\ast}}}R_0(t-r), \quad \forall t \leq t_0, r\geq0.
	\end{equation*}
where
\begin{equation}\label{28lem01}
	\rho_{r_{\ast}}(t)=(1+\beta(t,0)-\beta(t,r_{\ast}))\frac{\alpha(t,r_{\ast})}{1-\beta(t,r_{\ast})} \quad \text{and} \quad  Q_{r_{\ast}}=\frac{\gamma(0)}{\gamma(r_{\ast})}.
\end{equation}
\end{Lemma}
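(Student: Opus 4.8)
\emph{Proof proposal.} The plan is to fix $t_0\in\mathbb{R}$, choose a single ``unit'' time step $r_{\ast}>0$ adapted to the data, split the interval $[t-r,t]$ into $\lfloor r/r_{\ast}\rfloor$ consecutive blocks of length $r_{\ast}$ (plus a shorter remainder block when $r$ is not a multiple of $r_{\ast}$), and telescope the decomposition of hypothesis (i) block by block. On each block the $\mathcal{H}$-component is contracted by the factor $\gamma(r_{\ast})<1$ coming from (ii), while the $\mathcal{V}$-component obeys an affine recursion $\zeta\mapsto\beta\,\zeta+\alpha$ coming from (iii) which stays uniformly bounded because $\beta(t,r_{\ast})<1$; summing the resulting geometric factors will produce exactly the constant $Q_{r_{\ast}}$ and the profile $\rho_{r_{\ast}}(t)$ in \eqref{28lem01}.

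First I would fix the step. Since $\gamma\in\mathfrak{D}$ and $\beta(t_0,\cdot)\in\mathfrak{D}$ both have limit strictly less than $1$ at $+\infty$, and since $\widehat{B}$ is semi-uniformly pullback $\mathcal{D}$-absorbing, there is $\tau_{\ast}=\tau_{\ast}(t_0)<t_0$ with $U(s,\tau)B(\tau)\subseteq B(s)$ for all $\tau\le\tau_{\ast}$ and $s\le t_0$. I would pick $r_{\ast}$ large enough that $\gamma(r_{\ast})\in(0,1)$, $\beta(t_0,r_{\ast})<1$ and $r_{\ast}\ge t_0-\tau_{\ast}$ hold simultaneously. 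Since $\beta(\cdot,r_{\ast})$ and $\alpha(\cdot,r_{\ast})$ lie in $\mathfrak{I}$ (hence are nondecreasing in time) and $\gamma$ is nonincreasing, this yields, for all $t\le t_0$ and all $\sigma\in[0,r_{\ast})$, the uniform bounds $\beta(t,r_{\ast})\le\beta(t_0,r_{\ast})=:b<1$, $\beta(t,\sigma)\le\beta(t,0)$, $\alpha(t,\sigma)\le\alpha(t,r_{\ast})$ and $\gamma(\sigma)\le\gamma(0)$; I then set $Q_{r_{\ast}}=\gamma(0)/\gamma(r_{\ast})$ and take $\rho_{r_{\ast}}(t)$ as in \eqref{28lem01}.

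Now fix $t\le t_0$, $r\ge0$ and $x\in B(t-r)$; write $r=nr_{\ast}+\sigma$ with $n=\lfloor r/r_{\ast}\rfloor$ and $\sigma\in[0,r_{\ast})$, and set $\tau_j=t-r+jr_{\ast}$ for $0\le j\le n$ together with $\tau_{n+1}=t$. Put $\xi_0=x$ and $\xi_{j+1}=U(\tau_{j+1},\tau_j)\xi_j$, so $\xi_{n+1}=U(t,t-r)x$; since (for $n\ge1$) $\tau_0\le\cdots\le\tau_{n-1}\le t-r_{\ast}\le t_0-r_{\ast}\le\tau_{\ast}$, the semi-uniform absorbing property propagates $\xi_0\in B(\tau_0)$ to $\xi_j\in B(\tau_j)$ for all $0\le j\le n$ (for $n=0$ only $x\in B(t-r)$ is used). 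Starting from $y_0=x\in\mathcal{H}$ and $z_0=0\in\mathcal{V}$, I would use (i) with base point $\xi_j$ to define $y_{j+1}=S_{\xi_j}(\tau_{j+1},\tau_j)y_j\in\mathcal{H}$ and $z_{j+1}=P_{\xi_j}(\tau_{j+1},\tau_j)z_j\in\mathcal{V}$, so that $\xi_{j+1}=y_{j+1}+z_{j+1}$ is preserved along the chain. By (ii), $\|y_{j+1}\|_{\mathcal{H}}\le\gamma(r_{\ast})\|y_j\|_{\mathcal{H}}$ on the $n$ full blocks and $\|y_{n+1}\|_{\mathcal{H}}\le\gamma(\sigma)\|y_n\|_{\mathcal{H}}\le\gamma(0)\|y_n\|_{\mathcal{H}}$ on the remainder, whence, using $\|x\|_{\mathcal{H}}\le R_0(t-r)$ and $r\le(n+1)r_{\ast}$,
\begin{equation*}
	\|y_{n+1}\|_{\mathcal{H}}\le\gamma(0)(\gamma(r_{\ast}))^{n}R_0(t-r)=Q_{r_{\ast}}(\gamma(r_{\ast}))^{n+1}R_0(t-r)\le Q_{r_{\ast}}(\gamma(r_{\ast}))^{r/r_{\ast}}R_0(t-r).
\end{equation*}
By (iii) and the monotonicity bounds above, $\|z_{j+1}\|_{\mathcal{V}}\le b\|z_j\|_{\mathcal{V}}+\alpha(t,r_{\ast})$ on the full blocks, so $\|z_n\|_{\mathcal{V}}\le\alpha(t,r_{\ast})/(1-b)$, and then on the remainder $\|z_{n+1}\|_{\mathcal{V}}\le\beta(t,0)\,\alpha(t,r_{\ast})/(1-b)+\alpha(t,r_{\ast})=\rho_{r_{\ast}}(t)$, i.e. $z_{n+1}\in\overline{\mathbf{B}}_{\mathcal{V}}(\rho_{r_{\ast}}(t))$. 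Hence
\begin{equation*}
	dist_{\mathcal{H}}\bigl(U(t,t-r)x,\overline{\mathbf{B}}_{\mathcal{V}}(\rho_{r_{\ast}}(t))\bigr)\le\|\xi_{n+1}-z_{n+1}\|_{\mathcal{H}}=\|y_{n+1}\|_{\mathcal{H}}\le Q_{r_{\ast}}(\gamma(r_{\ast}))^{r/r_{\ast}}R_0(t-r),
\end{equation*}
and taking the supremum over $x\in B(t-r)$ gives the claim; when $\sigma=0$ the remainder block is vacuous and the two displayed bounds still hold directly since $Q_{r_{\ast}}\ge1$ and $\rho_{r_{\ast}}(t)\ge\alpha(t,r_{\ast})/(1-b)$.

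I expect the main difficulty to be bookkeeping rather than anything conceptual: one has to (a) track the leftover block of length $\sigma\in[0,r_{\ast})$ carefully so that precisely the constant $Q_{r_{\ast}}$ and the profile $\rho_{r_{\ast}}(t)$ of \eqref{28lem01}, and not merely comparable quantities, come out of the telescoping, and (b) choose $r_{\ast}$ large relative to the absorbing time $\tau_{\ast}(t_0)$ so that the whole telescoped orbit $\{\xi_j\}$ remains inside the absorbing family $\widehat{B}$ --- which is exactly what is needed to apply (ii) and (iii) with the legitimate base points $\xi_j\in B(\tau_j)$ at every single block.
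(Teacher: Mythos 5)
The paper itself gives no proof of Lemma \ref{lem28} --- it is quoted verbatim from \cite{wzl} --- so there is no in-paper argument to compare against; judged on its own terms, your block-iteration proof is the standard route to this statement and is essentially sound: the choice of $r_{\ast}$ (large enough that $\gamma(r_{\ast})<1$, $\beta(t_0,r_{\ast})<1$ and $t_0-r_{\ast}\le\tau_{\ast}$), the propagation $\xi_j\in B(\tau_j)$ via the semi-uniform absorbing property (which is exactly what legitimizes applying ii) and iii) with base points $\xi_j$), the contraction of the $\mathcal{H}$-part giving $\gamma(0)\gamma(r_{\ast})^{n}R_0(t-r)\le Q_{r_{\ast}}(\gamma(r_{\ast}))^{r/r_{\ast}}R_0(t-r)$, and the affine recursion for the $\mathcal{V}$-part are all correct, including the treatment of the remainder block and the cases $\sigma=0$, $n=0$.

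One slip needs fixing: in the $\mathcal{V}$-recursion you uniformize the contraction factor to $b=\beta(t_0,r_{\ast})$ and then assert $\beta(t,0)\,\alpha(t,r_{\ast})/(1-b)+\alpha(t,r_{\ast})=\rho_{r_{\ast}}(t)$. That identity holds only when $b=\beta(t,r_{\ast})$; since $\beta(\cdot,r_{\ast})\in\mathfrak{I}$ is increasing, your $b=\beta(t_0,r_{\ast})\ge\beta(t,r_{\ast})$ makes the left-hand side in general strictly larger than $\rho_{r_{\ast}}(t)$, so as written you have not placed $z_{n+1}$ inside $\overline{\mathbf{B}}_{\mathcal{V}}(\rho_{r_{\ast}}(t))$. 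The repair is immediate and uses your own monotonicity observations: on the block ending at $\tau_{j+1}\le t$, hypothesis iii) gives the factor $\beta(\tau_{j+1},r_{\ast})\le\beta(t,r_{\ast})$ (the role of $t_0$ is only to ensure $\beta(t,r_{\ast})\le\beta(t_0,r_{\ast})<1$ so the geometric sum converges), hence $\|z_{j+1}\|_{\mathcal{V}}\le\beta(t,r_{\ast})\|z_j\|_{\mathcal{V}}+\alpha(t,r_{\ast})$, which yields $\|z_n\|_{\mathcal{V}}\le\alpha(t,r_{\ast})/(1-\beta(t,r_{\ast}))$ and then on the remainder $\|z_{n+1}\|_{\mathcal{V}}\le\beta(t,0)\,\alpha(t,r_{\ast})/(1-\beta(t,r_{\ast}))+\alpha(t,r_{\ast})$, which equals exactly $\rho_{r_{\ast}}(t)=(1+\beta(t,0)-\beta(t,r_{\ast}))\,\alpha(t,r_{\ast})/(1-\beta(t,r_{\ast}))$. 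With that one replacement the rest of your argument goes through unchanged.
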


\section{Existence}\label{sec3}
The existence and uniqueness of weak solution $u$ to problem \eqref{ine01} (see, e.g., \cite{hhmm1}) can be obtained by the usual Faedo-Galerkin approximation and a compactness method. Such solutions satisfy that for any $\left[\tau,T\right] \in \mathbb{R}$,
\begin{equation*}
	u \in C(\left[ \tau,T\right]; H_0^1(\Omega)) \text{,} \quad  \quad \frac{\partial u}{\partial t} \in L^2((\tau,T);H_0^1(\Omega)),
\end{equation*}
with $u(t)=\phi(t-\tau)$ for $t \in \left[ \tau-h, \tau \right]$. 
\begin{Lemma}(\cite{hhmm1}) \label{3lem01}
	For any $\tau \in \mathbb{R}, T>\tau$, $u^0 \in H_0^1(\Omega)$, $\phi \in L^2((-h,0);L^2(\Omega))$ and if there exist positive constants $\eta_1$, $\eta_2 <\frac{1}{2}$ such that $\lambda_1>\max\left\{\mu,l\right\}+\frac{\eta_1 +\eta_2}{2}+\frac{C_b}{2\eta_1}$, then problem \eqref{ine01} has a unique weak solution $u$ on $\left( \tau,T\right)$.
\end{Lemma}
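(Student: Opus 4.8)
The assertion is quoted from \cite{hhmm1}, so the plan is to outline the usual Faedo--Galerkin construction adapted to the pseudoparabolic structure and the memory term. First I would fix the $L^2$-orthonormal eigenbasis $\{w_j\}_{j\ge1}$ of $-\Delta$ with homogeneous Dirichlet conditions, set $V_n=\mathrm{span}\{w_1,\dots,w_n\}$ with orthogonal projection $P_n$, and look for $u^n(t)=\sum_{j=1}^n a^n_j(t)w_j$ solving the Galerkin system
\begin{equation*}
	\langle\partial_t u^n,w_j\rangle+\langle\nabla\partial_t u^n,\nabla w_j\rangle+\langle\nabla u^n,\nabla w_j\rangle+\langle f(u^n),w_j\rangle=\langle b(t,u^n_t),w_j\rangle+\langle g(t),w_j\rangle,\qquad 1\le j\le n,
\end{equation*}
with $u^n(\tau)=P_nu^0$ and $u^n(\tau+\theta)=P_n\phi(\theta)$ for $\theta\in(-h,0)$. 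Since the $j$-th coefficient equation reads $(1+\lambda_j)\dot a^n_j=-\lambda_j a^n_j-\langle f(u^n),w_j\rangle+\langle b(t,u^n_t),w_j\rangle+\langle g(t),w_j\rangle$ with $1+\lambda_j>0$, this is a retarded ODE system whose right-hand side is continuous in $t$ and locally Lipschitz in the state (the $f$-part is smooth, the $b$-part is Lipschitz by (III)); the Cauchy--Lipschitz theorem for functional differential equations provides a unique local solution, which the a priori bounds below continue to all of $[\tau,T]$. (Because $I-\Delta$ is boundedly invertible, one could equivalently work with the functional ODE $\partial_t u=(I-\Delta)^{-1}[\Delta u-f(u)+b(t,u_t)+g(t)]$ in $H_0^1(\Omega)$.)

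Next I would derive two a priori estimates. Testing with $(a^n_j)_j$ gives
\begin{equation*}
	\tfrac12\tfrac{d}{dt}\bigl(\|u^n\|^2+\|\nabla u^n\|^2\bigr)+\|\nabla u^n\|^2+\langle f(u^n),u^n\rangle=\langle b(t,u^n_t),u^n\rangle+\langle g(t),u^n\rangle;
\end{equation*}
using $f(u)u\ge-\mu u^2-C_1$, the Poincar\'e inequality $\lambda_1\|u^n\|^2\le\|\nabla u^n\|^2$, Young's inequality on the forcing and delay terms with the parameters $\eta_1,\eta_2$, then integrating in time and invoking (IV) (or (V)) to bound $\int_\tau^t\|b(s,u^n_s)\|^2\,ds$ by $C_b\int_{\tau-h}^t\|u^n(s)\|^2\,ds$, the condition $\lambda_1>\max\{\mu,l\}+\tfrac{\eta_1+\eta_2}{2}+\tfrac{C_b}{2\eta_1}$ lets Gronwall's lemma close the estimate and yields $u^n$ bounded in $L^\infty(\tau,T;H_0^1(\Omega))$ uniformly in $n$; then \eqref{1eq07} with $H_0^1(\Omega)\hookrightarrow L^6(\Omega)$ shows $f(u^n)$ bounded in $L^\infty(\tau,T;L^{6/5}(\Omega))\hookrightarrow L^\infty(\tau,T;H^{-1}(\Omega))$. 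Testing instead with $(\dot a^n_j)_j$ gives
\begin{equation*}
	\|\partial_t u^n\|^2+\|\nabla\partial_t u^n\|^2+\tfrac12\tfrac{d}{dt}\|\nabla u^n\|^2+\tfrac{d}{dt}\!\int_\Omega F(u^n)\,dx=\langle b(t,u^n_t),\partial_t u^n\rangle+\langle g(t),\partial_t u^n\rangle;
\end{equation*}
bounding $\int_\Omega F(u^n)\,dx$ from below by \eqref{1eq06}, its value at $\tau$ from above by $|F(u)|\le C(|u|+|u|^6)$ together with $u^0\in L^6(\Omega)$, and absorbing the right-hand side by Young's inequality using the first estimate, I obtain $\partial_t u^n$ bounded in $L^2(\tau,T;H_0^1(\Omega))$ uniformly in $n$.

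Then I would pass to the limit: along a subsequence $u^n\to u$ weakly-$\ast$ in $L^\infty(\tau,T;H_0^1)$, $\partial_t u^n\to\partial_t u$ weakly in $L^2(\tau,T;H_0^1)$, and $f(u^n)\to\chi$ weakly in $L^{6/5}((\tau,T)\times\Omega)$. Since the term $-\Delta\partial_t u$ gives the equation no parabolic smoothing, the only source of time compactness is the bound on $\partial_t u^n$; combining it with the $L^\infty(\tau,T;H_0^1)$ bound and the Aubin--Lions--Simon lemma (using $H_0^1(\Omega)\hookrightarrow\hookrightarrow L^2(\Omega)$) gives $u^n\to u$ strongly in $C([\tau,T];L^2(\Omega))$, hence a.e.\ on $(\tau,T)\times\Omega$, and the standard identification lemma (weak $L^{6/5}$ convergence plus a.e.\ convergence) gives $\chi=f(u)$. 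Combining the strong $L^2$ convergence on $[\tau,T]$ with $P_n\phi\to\phi$ on the history interval, hypothesis (IV) yields $b(\cdot,u^n_\cdot)\to b(\cdot,u_\cdot)$ in $L^2(\tau,T;L^2(\Omega))$; passing to the limit in the Galerkin equations and checking $u(\tau)=u^0$, $u_\tau=\phi$ (using $u\in C([\tau,T];H_0^1(\Omega))$, which follows since $\partial_t u\in L^2(\tau,T;H_0^1)\subset L^1(\tau,T;H_0^1)$) gives a weak solution on $(\tau,T)$. For uniqueness I would take two solutions $u,v$, put $w=u-v$ (so $w\equiv0$ on $[\tau-h,\tau]$), test the difference equation with $w$, use $f'\ge-l$ so that $\langle f(u)-f(v),w\rangle\ge-l\|w\|^2$, bound the memory term by Young's inequality and (IV), and invoke the same condition on $\lambda_1$ so that Gronwall's lemma forces $w\equiv0$.

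The main obstacle is the interplay of three features. The nonlinearity $f$ has critical quintic growth (exactly the level of $H_0^1(\Omega)\hookrightarrow L^6(\Omega)$ in $\mathbb R^3$) and is not Lipschitz, so identifying the weak limit of $f(u^n)$ requires the strong/a.e.\ compactness step rather than a soft monotonicity argument. The term $-\Delta\partial_t u$ destroys parabolic regularization, so that the solution never leaves $H_0^1(\Omega)$ and all the time compactness needed for the limit must be extracted from the $\partial_t u^n\in L^2(\tau,T;H_0^1)$ bound coming from the second energy identity. Finally, the memory term must be handled through the integral hypotheses (IV)--(V) on $b$ rather than a pointwise-in-time control, and it is precisely this that makes the structural inequality $\lambda_1>\max\{\mu,l\}+\tfrac{\eta_1+\eta_2}{2}+\tfrac{C_b}{2\eta_1}$ the right condition to close both the basic energy estimate and the uniqueness estimate.
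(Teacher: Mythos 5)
Your outline is correct and coincides with the route the paper itself points to: Lemma \ref{3lem01} is quoted from \cite{hhmm1}, and the paper's only indication of proof is precisely "the usual Faedo--Galerkin approximation and a compactness method," which is what you carry out (Galerkin system with the retarded term, the two energy estimates exploiting $\lambda_1>\max\{\mu,l\}+\frac{\eta_1+\eta_2}{2}+\frac{C_b}{2\eta_1}$, Aubin--Lions compactness to identify $f(u)$, hypothesis (IV) for the delay term, and a Gronwall argument for uniqueness). The only cosmetic caveat is that, since $g$ and $t\mapsto b(t,\phi)$ are merely measurable and square-integrable in time, local solvability of the Galerkin functional ODE should be invoked in its Carath\'eodory form rather than via continuity of the right-hand side in $t$.
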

Invoking Lemma \ref{3lem01}, we will apply the results in the phase space $\mathbb{Y}:=H_0^1(\Omega)\times L^2((-h,0);L^2(\Omega))$, which is a Hilbert space with the norm 
\begin{equation*}
	\left \| (u^0,\phi) \right \|_{\mathbb{Y}}^2=\left\| \nabla u^0 \right\|^2+ \int_{-h}^{0} \left\| \phi(\theta) \right\|^2 {\rm d}\theta,
\end{equation*}
with a pair $\left( u^0, \phi \right)$ of $\mathbb{Y}$. 

Thus we can construct the process $U(t,\tau)$ with respect to problem \eqref{ine01} as follows:
\begin{equation*}
	\left( u^0, \phi \right) \mapsto U(t,\tau) \left( u^0, \phi \right)=\left(u(t),u_t\right) \quad \text{for all} \ t \geq \tau \ \text{and} \ \left( u^0, \phi \right) \in \mathbb{Y},
\end{equation*}
and the mapping $U(t,\tau):\mathbb{Y} \to \mathbb{Y}$ is continuous.

Simultaneously, we need to consider the Hilbert space 
\begin{equation*}
	\mathbb{Y}_1:=H_0^1(\Omega)\times L^2((-h,0);H_0^1(\Omega)),
\end{equation*}
with the norm
\begin{equation*}
	\left \| (u^0,\phi) \right \|_{\mathbb{Y}_1}^2=\left\| \nabla u^0 \right\|^2+ \int_{-h}^{0} \left\| \nabla \phi(\theta) \right\|^2 {\rm d}\theta.
\end{equation*}

We remark that when $t-\tau \geq h$, $U(t,\tau)$ maps $\mathbb{Y}$ to $\mathbb{Y}_1$. 

Let $\mathbb{K}^{\kappa}:=\boldsymbol{E}^{\kappa} \times L^2((-h,0);L^2(\Omega))$, which is a Hilbert space with the norm 
\begin{equation*}
	\left \| (u^0,\phi) \right \|_{\mathbb{K}^{\kappa}}^2=\|  u^0 \|^2_{\kappa}+ \int_{-h}^{0} \left\| \phi(\theta) \right\|^2 {\rm d}\theta.
\end{equation*}
 It's not difficult to find that $\mathbb{K}^{1}=\mathbb{Y}$.

\begin{Lemma}\label{L32}
	Let assumptions $\left(\mathbf{H1}\right)-\left(\mathbf{H3}\right)$ be satisfied. Assume $g \in L^2_{loc}(\mathbb{R}; \boldsymbol{E}^{-1})$ and satisfies
	\begin{equation}\label{1lem08}
	G_{-1}(t):= \sup_{\tau \leq t} \left( \int_{\tau-1}^{\tau} \left\| g(s) \right\|^2_{-1} {\rm d}s\right)^{\frac{1}{2}}< \infty, \quad \forall t \in \mathbb{R}.
	\end{equation}
	Then for all $t$ for which $t \geq \tau + h$ and all $(u^0,\phi) \in \mathbb{Y}$, we have following estimates:
	\begin{equation}\label{1lem11}
		\left \| \nabla u(t)\right \|^2 \leq C\left\{ e^{-\sigma (t-\tau)} \left \| (u^0,\phi) \right \|_{\mathbb{Y}}^{6}+G_{-1}^2(t)  +1\right\},
	\end{equation}
	\begin{equation}\label{1lem12}
	 \int_{t-h}^{t} \left \| \nabla u(s)\right \|^2 {\rm d}s \leq C \bar{\nu}^{-1}e^{\sigma h}\left\{ e^{-\sigma (t-\tau)} \left \| (u^0,\phi) \right \|_{\mathbb{Y}}^{6}+G_{-1}^2(t)  +1\right\} 
	\end{equation}
	where $\bar{\nu} :=4(\bar{\delta} -\sigma-\frac{C_b}{\lambda_1})>0$, $0<\sigma<\bar{\delta}<\min \left\{ k, \frac{\lambda_1}{2\lambda_1 +1}\right\}$, $C>0$ is a positive constant which is independent of $t$ and $\tau$.
\end{Lemma}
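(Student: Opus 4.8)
The plan is to read off both estimates from a single exponentially–weighted energy inequality produced by testing \eqref{ine01} with $u$ itself. First I would multiply \eqref{ine01} by $u(t)$ and integrate over $\Omega$; since $\langle\partial_t u-\Delta\partial_t u,u\rangle=\frac12\frac{d}{dt}(\|u\|^2+\|\nabla u\|^2)$ and $\langle-\Delta u,u\rangle=\|\nabla u\|^2$, writing $E(t):=\|u(t)\|^2+\|\nabla u(t)\|^2$ this yields
\[
\frac12\frac{d}{dt}E(t)+\|\nabla u(t)\|^2+\langle f(u),u\rangle=\langle b(t,u_t),u\rangle+\langle g(t),u\rangle ,
\]
a formal identity which is justified on Faedo--Galerkin approximations and passed to the limit, just as for the existence of weak solutions. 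For the nonlinear term I use $f(u)u\ge-\mu u^2-C_1$, so that with the generalized Poincar\'e inequality $\lambda_1\|u\|^2\le\|\nabla u\|^2$ and $\mu<\lambda_1$ the quantity $\|\nabla u\|^2+\langle f(u),u\rangle$ dominates a positive multiple of $\|\nabla u\|^2$ up to an additive constant. I estimate the force by $|\langle g(t),u\rangle|\le\|g(t)\|_{-1}\|\nabla u\|$ and Young, and the delay term by $\|b(t,u_t)\|\le L_b\|u_t\|_{L^2((-h,0);L^2(\Omega))}$ (from $(\mathbf{H3})$(III) with $b(t,0)=0$), $\langle b,u\rangle\le\|b\|\,\|u\|$ and Young. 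Absorbing the $\|u\|^2$–terms and part of $\|\nabla u\|^2$ into the dissipation and choosing the auxiliary constants compatibly with $0<\sigma<\bar\delta<\min\{k,\lambda_1/(2\lambda_1+1)\}$, I reach a differential inequality
\[
\frac{d}{dt}E(t)+\sigma E(t)+\bar\nu\|\nabla u(t)\|^2\le C\|b(t,u_t)\|^2+C\|g(t)\|_{-1}^2+C ,
\]
with $\bar\nu=4(\bar\delta-\sigma-C_b/\lambda_1)>0$ recording the dissipation left over for the delay term.

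Next I would multiply by $e^{\sigma t}$ and integrate over $[\tau,t]$, obtaining
\[
e^{\sigma t}E(t)+\bar\nu\int_\tau^t e^{\sigma s}\|\nabla u(s)\|^2\,\mathrm{d}s\le e^{\sigma\tau}E(\tau)+C\int_\tau^t e^{\sigma s}\|b(s,u_s)\|^2\,\mathrm{d}s+C\int_\tau^t e^{\sigma s}\|g(s)\|_{-1}^2\,\mathrm{d}s+\frac{C}{\sigma}e^{\sigma t}.
\]
Here is the decisive step. Using $(\mathbf{H3})$(V), $\int_\tau^t e^{\sigma s}\|b(s,u_s)\|^2\,\mathrm{d}s\le C_b\int_{\tau-h}^t e^{\sigma s}\|u(s)\|^2\,\mathrm{d}s$, I split the integral at $s=\tau$; the history part is controlled by $\int_{\tau-h}^\tau e^{\sigma s}\|\phi(s-\tau)\|^2\,\mathrm{d}s\le e^{\sigma\tau}\|(u^0,\phi)\|_{\mathbb{Y}}^2$, while $\int_\tau^t e^{\sigma s}\|u(s)\|^2\,\mathrm{d}s\le\lambda_1^{-1}\int_\tau^t e^{\sigma s}\|\nabla u(s)\|^2\,\mathrm{d}s$ is absorbed by the left-hand side precisely because $\bar\nu>0$, i.e.\ $\bar\delta-\sigma-C_b/\lambda_1>0$. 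Together with $E(\tau)\le C\|(u^0,\phi)\|_{\mathbb{Y}}^2$ this gives, for some $c>0$ comparable to $\bar\nu$,
\[
e^{\sigma t}E(t)+c\int_\tau^t e^{\sigma s}\|\nabla u(s)\|^2\,\mathrm{d}s\le Ce^{\sigma\tau}\|(u^0,\phi)\|_{\mathbb{Y}}^2+C\int_\tau^t e^{\sigma s}\|g(s)\|_{-1}^2\,\mathrm{d}s+\frac{C}{\sigma}e^{\sigma t}.
\]

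Finally I would convert the force integral into $G_{-1}$: splitting $(-\infty,t]$ into the unit intervals $[t-n-1,t-n]$, using $e^{\sigma s}\le e^{\sigma(t-n)}$ on each and $\int_{t-n-1}^{t-n}\|g(s)\|_{-1}^2\,\mathrm{d}s\le G_{-1}^2(t)$, and summing the geometric series gives $e^{-\sigma t}\int_\tau^t e^{\sigma s}\|g(s)\|_{-1}^2\,\mathrm{d}s\le(1-e^{-\sigma})^{-1}G_{-1}^2(t)$. Dividing the last display by $e^{\sigma t}$ and discarding the nonnegative surviving integral yields $\|\nabla u(t)\|^2\le E(t)\le Ce^{-\sigma(t-\tau)}\|(u^0,\phi)\|_{\mathbb{Y}}^2+CG_{-1}^2(t)+C$, and the elementary bound $x^2\le x^6+1$ for $x\ge0$ promotes $\|(u^0,\phi)\|_{\mathbb{Y}}^2$ to $\|(u^0,\phi)\|_{\mathbb{Y}}^6$, which is \eqref{1lem11}. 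For \eqref{1lem12} I keep the integral instead: since $t\ge\tau+h$ one has $[t-h,t]\subset[\tau,t]$ and $e^{\sigma s}\ge e^{\sigma(t-h)}$ there, so $\int_{t-h}^t\|\nabla u(s)\|^2\,\mathrm{d}s\le c^{-1}e^{\sigma h}e^{-\sigma t}\bigl(\text{right-hand side}\bigr)$, and carrying through the factor $e^{\sigma h}$ and applying $x^2\le x^6+1$ once more yields \eqref{1lem12} with the claimed $\bar\nu^{-1}e^{\sigma h}$.

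The main obstacle is the delay term $b(t,u_t)$: it feeds the estimate at time $t$ with the whole history of $u$ on $[t-h,t]$, and iterating back with the initial history on $[\tau-h,\tau]$, so a plain Gronwall argument does not close. The remedy is to run Gronwall with the weight $e^{\sigma t}$ so that $(\mathbf{H3})$(V) becomes applicable, and then to arrange --- via the smallness encoded in $\bar\nu=4(\bar\delta-\sigma-C_b/\lambda_1)>0$ --- that the resulting history contribution $\tfrac{C_b}{\lambda_1}\int e^{\sigma s}\|\nabla u\|^2$ is swallowed by the leftover dissipation; keeping track of all the constants (relating $\mu$, $l$, $k$, $C_b$, $\lambda_1$, $\sigma$, $\bar\delta$) so that each term ends on the correct side is the only delicate bookkeeping, the remaining steps being routine a priori estimates.
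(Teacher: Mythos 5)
Your second half coincides with the paper's: the $e^{\sigma t}$-weighted Gronwall argument, the use of (H3)(II) and (V) to split the delay into a history part over $[\tau-h,\tau]$ and a current part absorbed through Poincar\'e into the leftover $\|\nabla u\|^2$-dissipation, the unit-interval decomposition giving $(1-e^{-\sigma})^{-1}G_{-1}^2(t)$, and the restriction to $[t-h,t]$ with the factor $e^{\sigma h}$ for \eqref{1lem12} are all exactly the paper's steps. The gap is in the first half, at the claim that testing with $u$ alone yields $\frac{d}{dt}E+\sigma E+\bar\nu\|\nabla u(t)\|^2\le C\|b(t,u_t)\|^2+C\|g(t)\|_{-1}^2+C$ with $E=\|u\|^2+\|\nabla u\|^2$. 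Using only $f(u)u\ge-\mu u^2-C_1$, the dissipation you produce is $\|\nabla u\|^2-\mu\|u\|^2$ minus Young losses, and since $\|\nabla u\|^2-\mu\|u\|^2\ge cE$ forces $c\le\frac{\lambda_1-\mu}{\lambda_1+1}$, the exponential rate you can extract is capped by roughly $\frac{2(\lambda_1-\mu)}{\lambda_1+1}$, a quantity tied to $\mu$. The lemma's constants, however, only require $0<\sigma<\bar\delta<\min\{k,\frac{\lambda_1}{2\lambda_1+1}\}$ and $\bar\delta-\sigma>\frac{C_b}{\lambda_1}$, with no relation between $\sigma$ and $\mu$: for instance $\lambda_1=10$, $\mu=9$, $C_b=\frac12$, $\sigma$ close to $0.4$ is admissible (and compatible with the well-posedness condition of Lemma \ref{3lem01}), yet your achievable rate is at most about $\frac{2}{11}<\sigma$, so neither the absorption nor the decay $e^{-\sigma(t-\tau)}$ in \eqref{1lem11} is obtained. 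The assertion that the absorption closes ``precisely because $\bar\nu>0$'' is therefore unjustified: $\bar\delta$ and $\bar\nu$ never enter your derivation at all, and the positivity of $\bar\nu$ is calibrated to a decay rate $\bar\delta$ that your energy does not have.

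The paper avoids this by testing with $u+\partial_t u$: then $\langle f(u),\partial_t u\rangle=\frac{d}{dt}\int_\Omega F(u)\,{\rm d}x$, the energy becomes $\|u\|^2+2\|\nabla u\|^2+2\int_\Omega F(u)\,{\rm d}x$, and the conditions \eqref{1eq05}--\eqref{1eq06}, valid for \emph{every} $\delta>0$, give decay of this energy at rate $\bar\delta$ limited only by $k$ and $\lambda_1$, uniformly in $\mu$; that is exactly what makes $\bar\nu=4(\bar\delta-\sigma-\frac{C_b}{\lambda_1})>0$ the right smallness condition, and it is also the genuine source of the exponent $6$ in \eqref{1lem11}--\eqref{1lem12}, through $\int_\Omega F(u(\tau))\,{\rm d}x\le C\|\nabla u(\tau)\|^{6}+C$, rather than the artificial weakening $x^2\le x^6+1$. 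Your route can be repaired while keeping the test function $u$: combining \eqref{1eq05} and \eqref{1eq06} gives $uf(u)\ge-(k+1)\delta u^2-C_\delta$ for every $\delta>0$, so the dissipation margin is not tied to $\mu$ and the bookkeeping closes for the stated range of $\sigma$; but as written, with the fixed-$\mu$ bound, the key differential inequality fails for admissible parameters, and this is a genuine gap rather than routine constant-chasing.
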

\begin{proof}
	Multiplying \eqref{ine01} by $u+\partial_t u$ and integrating it over $\Omega$, we arrive at
		\begin{align}
			&\frac{1}{2} \frac{d}{dt}\left( \left \| u(t)\right \|^2 +2\left \| \nabla u(t)\right \|^2  \right)+\left \| \nabla u(t)\right \|^2+\left \| \partial_t u(t)\right \|^2+ \left \| \nabla \partial_t u(t)\right \|^2 \nonumber\\
			&\quad +\int_{\Omega} f(u)(u+\partial_t u){\rm d}x \nonumber\\
			&= \int_{\Omega} b(t,u_t)(u+\partial_t u){\rm d}x+\int_{\Omega} g(t)(u+\partial_t u){\rm d}x.
		\end{align}
Using \eqref{1eq05}, the Cauchy-Schwarz and Young inequalities, we observe that
\begin{equation*}
	\begin{aligned}
	&\int_{\Omega} b(t,u_t)(u+\partial_t u){\rm d}x+\int_{\Omega} g(u+\partial_t u){\rm d}x \\
	&\leq  \left \| b(t,u_t) \right \|^2 + \left \| g(t) \right \|^2_{-1} +\frac{1}{2}\left \| u(t) \right \|^2+\frac{1}{2}\left \| \partial_t u(t) \right \|^2+\frac{1}{2}\left \| u(t) \right \|^2_1+\frac{1}{2}\left \| \partial_t u(t) \right \|^2_1,
	\end{aligned}
\end{equation*}
which gives
	\begin{align}
		&\frac{1}{2} \frac{d}{dt}\left( \left \| u(t)\right \|^2 +2\left \| \nabla u(t)\right \|^2  +2\int_{\Omega} F(u(t)){\rm d}x\right)+\frac{1}{2}\left \| \nabla u(t)\right \|^2+ k\int_{\Omega} F(u(t)){\rm d}x \nonumber \\
		&\leq \left \| b(t,u_t) \right \|^2 + \left \| g(t) \right \|^2_{-1} +(\frac{1}{2}+\delta) \left \| u(t) \right \|^2+C_{\delta 1}\left| \Omega \right|.
	\end{align}
By the Poincar\'{e}'s inequality, we get
	\begin{align}
		& \frac{d}{dt}\left( \left \| u(t)\right \|^2 +2\left \| \nabla u(t)\right \|^2  +2\int_{\Omega} F(u(t)){\rm d}x\right)+\left(1- \frac{2(\frac{1}{2}+\delta)}{\lambda_1} \right)\left \| \nabla u(t)\right \|^2 \nonumber\\
		&\quad +2k\int_{\Omega} F(u(t)){\rm d}x \nonumber \\
		&\leq 2\left \| b(t,u_t) \right \|^2 + 2\left \| g(t) \right \|^2_{-1} +2C_{\delta 1}\left| \Omega \right|.
	\end{align}
We can take $\delta$ small enough such that
\begin{equation}
	\bar{\delta} \left(  \left \| u(t)\right \|^2 +2\left \| \nabla u(t)\right \|^2 \right) \leq \left(1- \frac{2(\frac{1}{2}+\delta)}{\lambda_1} \right)\left \| \nabla u(t)\right \|^2,
\end{equation}
where $0<\bar{\delta} < \min \left\{ k, \frac{\lambda_1}{2\lambda_1 +1}\right\}$. Then it follows that
\begin{equation}\label{1lem01}
	\frac{d}{dt}E(t)+\bar{\delta} E(t) \leq 2\left \| b(t,u_t) \right \|^2 + 2\left \| g(t) \right \|^2_{-1} +2C_{\delta 1}\left| \Omega \right|,
\end{equation}
where 
\begin{equation}\label{1lem02}
	E(t)=\left \| u(t)\right \|^2 +2\left \| \nabla u(t)\right \|^2  +2\int_{\Omega} F(u(t)){\rm d}x.
\end{equation}
Multiplying \eqref{1lem01} by $e^{\sigma t}$ with $0<\sigma<\bar{\delta}$, we obtain
\begin{equation*}
	\begin{aligned}
		& \frac{d}{dt}\left(e^{\sigma t}E(t)\right)\leq \left( \sigma -\bar {\delta} \right)e^{\sigma t} E(t) +2e^{\sigma t}\left( \left \| b(t,u_t) \right \|^2 + \left \| g(t) \right \|^2_{-1} +C_{\delta 1}\left| \Omega \right| \right).
	\end{aligned}
\end{equation*}
Integrating the above inequality from $\tau$ to $t$, we have
\begin{equation*}
	\begin{aligned}
		E(t)&\leq e^{-\sigma (t-\tau)} E(\tau)+ \left( \sigma -\bar {\delta} \right) e^{-\sigma t} \int_{\tau}^{t} e^{\sigma s} E(s) {\rm d}s+2e^{-\sigma t} \int_{\tau}^{t} e^{\sigma s}  \left \| b(s,u_s) \right \|^2 {\rm d}s \\
		&\quad+ 2e^{-\sigma t} \int_{\tau}^{t} e^{\sigma s}  \left \| g(s) \right \|^2_{-1} {\rm d}s  +2C_{\delta 1}\left| \Omega \right| \sigma^{-1}.
	\end{aligned}
\end{equation*}
	Therefore, using  (\uppercase \expandafter{\romannumeral 2}) and (\uppercase \expandafter{\romannumeral 5}), we obtain
	\begin{align}\label{1lem07}
		E(t)&\leq e^{-\sigma (t-\tau)} E(\tau)+ \left( \sigma -\bar {\delta} \right) e^{-\sigma t} \int_{\tau}^{t} e^{\sigma s} E(s) {\rm d}s+2C_be^{-\sigma (t-\tau)} \int_{\tau-h}^{\tau}  \left \| u(s) \right \|^2 {\rm d}s\nonumber \\
		&\quad+2C_b e^{-\sigma t} \int_{\tau}^{t} e^{\sigma s} \left \| u(s)\right \|^2 {\rm d}s+ 2e^{-\sigma t} \int_{\tau}^{t} e^{\sigma s}  \left \| g(s) \right \|^2_{-1} {\rm d}s  +2C_{\delta 1}\left| \Omega \right| \sigma^{-1}.
	\end{align}
We use \eqref{1eq06} in \eqref{1lem02} to obtain by choosing $\delta$ small enough
	\begin{align}\label{1lem05}
		E(t)&\geq \left \| u(t)\right \|^2 +2\left \| \nabla u(t)\right \|^2  -2\delta \left \| u(t)\right \|^2 -2C_{\delta 2}\left| \Omega \right| \nonumber\\
		&\geq \frac{1}{2} \left \| \nabla u(t)\right \|^2 -2C_{\delta 2}\left| \Omega \right|.
	\end{align}
	On the other hand,
\begin{equation}\label{1lem03}
	E(\tau)=\left \| u(\tau)\right \|^2 +2\left \| \nabla u(\tau)\right \|^2  +2\int_{\Omega} F(u(\tau)){\rm d}x.
\end{equation}
Thanks to \eqref{1eq07}, we get
\begin{equation*}
	\int_{\Omega} F(u){\rm d}x \leq C_2 \int_{\Omega} \left| u \right| {\rm d}x +\frac{C_2}{6} \int_{\Omega} \left| u \right|^{6}{\rm d}x+C.
\end{equation*}
Applying the Poincar\'{e}'s inequality, the H\"{o}lder's inequality, the Young inequality and the embedding $H_0^1(\Omega) \hookrightarrow L^{6}(\Omega)$, we have

	\begin{align}\label{1lem04}
		\int_{\Omega} F(u) {\rm d}x&\leq C \sqrt{\left| \Omega \right|} \left( \int_{\Omega} \left| u \right|^2{\rm d}x \right)^{\frac{1}{2}}+\frac{C}{6}\int_{\Omega} \left| u \right|^6{\rm d}x +C\nonumber\\
		& \leq C \sqrt{\left| \Omega \right|} \left\| u \right\|+\frac{C}{6}\left\| u \right\|_{L^6(\Omega)}^{6} +C\nonumber\\
		& \leq C \lambda_1^{-1} \sqrt{\left| \Omega \right|} \left\| \nabla u \right\|^{6}+\frac{C}{6}\left\| 
		\nabla u \right\|^{6} +C\nonumber\\
		&\leq C \left\| \nabla u \right\|^{6}+C
	\end{align}
where $C$ is a positive constant. Using \eqref{1lem04} in \eqref{1lem03}, the Poincar\'{e}'s inequality, and the Young inequality, we find that
\begin{equation}\label{1lem06}
	E(\tau)\leq (\lambda_1^{-1}+2)\left \| \nabla u(\tau)\right \|^2  +C \left\| \nabla u(\tau) \right\|^{6} +C\leq C \left\| \nabla u(\tau) \right\|^{6}+C.
\end{equation}
We substitute \eqref{1lem05} and \eqref{1lem06} in \eqref{1lem07} to yield
\begin{equation*}
	\begin{aligned}
		\frac{1}{2} \left \| \nabla u(t)\right \|^2 &\leq Ce^{-\sigma (t-\tau)}\left\| \nabla u(\tau) \right\|^{6}+2C_be^{-\sigma (t-\tau)} \int_{\tau-h}^{\tau}  \left \| u(s) \right \|^2 {\rm d}s+2C_b e^{-\sigma t} \int_{\tau}^{t} e^{\sigma s} \left \| u(s)\right \|^2 {\rm d}s\\
		&+\left( \sigma -\bar {\delta} \right) e^{-\sigma t} \int_{\tau}^{t} e^{\sigma s} \left( \left \| u(s)\right \|^2 +2\left \| \nabla u(s)\right \|^2  +2\int_{\Omega} F(u(s)){\rm d}x \right)  {\rm d}s\\
		&+2e^{-\sigma t} \int_{\tau}^{t} e^{\sigma s}  \left \| g(s) \right \|^2_{-1} {\rm d}s  +2C_{\delta 1}\left| \Omega \right| \sigma^{-1}+2C_{\delta 2}\left| \Omega \right|.
	\end{aligned}
\end{equation*}
Then, for $\bar{\delta} -\sigma-\frac{C_b}{\lambda_1}>0$, using \eqref{1lem08} and the Young inequality, we deduce that
	\begin{align}\label{1lem10}
		 &\left \| \nabla u(t)\right \|^2 +4(\bar{\delta} -\sigma-\frac{C_b}{\lambda_1})e^{-\sigma t} \int_{\tau}^{t} e^{\sigma s} \left \| \nabla u(s)\right \|^2 {\rm d}s \nonumber\\
		 &\leq Ce^{-\sigma (t-\tau)}\left\| \nabla u(\tau) \right\|^{6}+4C_be^{-\sigma (t-\tau)} \int_{\tau-h}^{\tau}  \left \| u(s) \right \|^2 {\rm d}s+4e^{-\sigma t} \int_{\tau}^{t} e^{\sigma s}  \left \| g(s) \right \|^2_{-1} {\rm d}s  +C.
	\end{align}
Noting that 
	\begin{align}\label{1lem09}
		e^{-\sigma t} \int_{\tau}^{t} e^{\sigma s}  \left \| g(s) \right \|^2_{-1} {\rm d}s &\leq  e^{-\sigma t} \left( e^{\sigma t} \int_{t-1}^{t} +e^{\sigma (t-1)}\int_{t-2}^{t-1}+ \cdots \right) \left \| g(s) \right \|^2_{-1} {\rm d}s \nonumber \\
		&\leq G_{-1}^2(t) \sum_{n=0}^{\infty} e^{-\sigma n} \nonumber\\
		& \leq G_{-1}^2(t) (1-e^{-\sigma})^{-1}.
	\end{align}
Hence, substituting \eqref{1lem09} into \eqref{1lem10}, we have
\begin{equation*}
	\begin{aligned}
		&\left \| \nabla u(t)\right \|^2 +4(\bar{\delta} -\sigma-\frac{C_b}{\lambda_1})e^{-\sigma t} \int_{\tau}^{t} e^{\sigma s} \left \| \nabla u(s)\right \|^2 {\rm d}s\\
		&\leq C\left\{ e^{-\sigma (t-\tau)} \left( \left\| \nabla u(\tau) \right\|^{6}+\left\| \phi \right\|_{L^2((-h,0);L^2(\Omega))}^{6} \right)+G_{-1}^2(t)  +1\right\}\\
		&\leq C\left\{ e^{-\sigma (t-\tau)} \left \| (u^0,\phi) \right \|_{\mathbb{Y}}^{6}+G_{-1}^2(t)  +1\right\} .
	\end{aligned}
\end{equation*}
Consequently, for all $t \geq \tau$, we obtain \eqref{1lem11}, and
\begin{equation*}
	\bar{\nu}e^{-\sigma t} \int_{\tau}^{t} e^{\sigma s} \left \| \nabla u(s)\right \|^2 {\rm d}s \leq C\left\{ e^{-\sigma (t-\tau)} \left \| (u^0,\phi) \right \|_{\mathbb{Y}}^{6}+G_{-1}^2(t)  +1\right\} ,
\end{equation*}
where $\bar{\nu} :=4(\bar{\delta} -\sigma-\frac{C_b}{\lambda_1})>0$. Furthermore, for $t-h \geq \tau$, we conclude as $\left[t-h,t\right] \subset \left[ \tau,t\right]$
\begin{equation*}
	\int_{\tau}^{t} e^{\sigma s} \left \| \nabla u(s)\right \|^2 {\rm d}s \geq \int_{t-h}^{t} e^{\sigma s} \left \| \nabla u(s)\right \|^2 {\rm d}s \geq e^{\sigma(t-h)} \int_{t-h}^{t} \left \| \nabla u(s)\right \|^2 {\rm d}s.
\end{equation*}
Therefore, for $t-h \geq \tau$, we obtatin \eqref{1lem12}. 
\end{proof}
Let $\mathcal{R}$ be the set of functions $\varrho : \mathbb{R} \mapsto \left(0,+\infty \right)$ such that 
\begin{equation*}
	\lim\limits_{t \to -\infty} e^{\sigma t} \varrho^{6}(t)=0.
\end{equation*}
By $\mathcal{D}$ we denote the class of all families $\widehat{D} =\left\{ D(t): t \in \mathbb{R} \right\} \subset \mathcal{P}(\mathbb{Y})$ such that $D(t) \subset \overline{\mathbf{B}}_{\mathbb{Y}} \left(0,\varrho(t)\right)$, for some $\varrho \in \mathcal{R}$. 
\begin{Lemma}\label{L33}
	Assume that assumptions $\left(\mathbf{H1}\right)-\left(\mathbf{H3}\right)$ are satisfied. Then the process $U(\cdot, \cdot)$ associated to problem \eqref{ine01} has a pullback $\mathcal{D}$-absorbing family $\widehat{B}=\{B(t):t \in \mathbb{R}\}$ in $\mathbb{Y}=H_0^1(\Omega)\times L^2((-h,0);L^2(\Omega))$, where 
	\begin{equation}
		B(t)=\left\{ \left(v^0,\varphi \right) \in \mathbb{Y}_1 : \left\| \left(v^0,\varphi \right) \right\|_{\mathbb{Y}_1} \leq R(t), \left\| \frac{d \varphi}{ds} \right\|_{L^2((-h,0);L^2(\Omega))} \leq \bar{R}(t)\right\},
	\end{equation}
	where
	\begin{equation*}
		\varrho_1(t)=C\left(G_{-1}^2(t)  +1\right),
	\end{equation*}
	\begin{equation*}
		R^2(t) =\left(1+\bar{\nu}^{-1}e^{\sigma h}\right)\varrho_1(t), \quad R(t) \geq 0,
	\end{equation*}
	\begin{equation*}
		\bar{R}^2(t)=C\varrho_1^{3}(t)+C\varrho_1 (t)+CG^2_{-1}(t), \quad \bar{R}(t) \geq 0.
	\end{equation*}
\end{Lemma}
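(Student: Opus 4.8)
The plan is to produce, for each $t\in\mathbb{R}$ and each $\widehat D=\{D(\tau)\}\in\mathcal D$, a time $\tau_0(t,\widehat D)\le t$ with $U(t,\tau)D(\tau)\subset B(t)$ for all $\tau\le\tau_0$; since $B(t)$ imposes two conditions, I would establish them in turn. Fix $(u^0,\phi)\in D(\tau)$ and set $(u(t),u_t)=U(t,\tau)(u^0,\phi)$; by the weak-solution regularity recalled at the start of Section~\ref{sec3} one has $\int_{-h}^0\|\nabla u_t(\theta)\|^2\,{\rm d}\theta=\int_{t-h}^t\|\nabla u(s)\|^2\,{\rm d}s$ and $\int_{-h}^0\|\partial_\theta u_t(\theta)\|^2\,{\rm d}\theta=\int_{t-h}^t\|\partial_s u(s)\|^2\,{\rm d}s$, and for $t-\tau\ge h$ the image indeed lies in $\mathbb{Y}_1$. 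Because $D(\tau)\subset\overline{\mathbf B}_{\mathbb{Y}}(0,\varrho(\tau))$ for some $\varrho\in\mathcal R$ and $e^{-\sigma(t-\tau)}\varrho^6(\tau)=e^{-\sigma t}\big(e^{\sigma\tau}\varrho^6(\tau)\big)\to 0$ as $\tau\to-\infty$, I would first choose $\tau_0(t,\widehat D)\le t-2h$ so small that $e^{-\sigma(t-\tau)}\varrho^6(\tau)\le 1$ for $\tau\le\tau_0$; this also makes Lemma~\ref{L32} applicable at $t$, at $t-h$ and on $[t-2h,t-h]$, $[t-h,t]$. Then \eqref{1lem11} and \eqref{1lem12} give $\|\nabla u(t)\|^2\le\varrho_1(t)$ and $\int_{t-h}^t\|\nabla u(s)\|^2\,{\rm d}s\le\bar\nu^{-1}e^{\sigma h}\varrho_1(t)$ for a suitable choice of the constant in $\varrho_1$, whence $\|(u(t),u_t)\|_{\mathbb{Y}_1}^2\le(1+\bar\nu^{-1}e^{\sigma h})\varrho_1(t)=R^2(t)$, the first condition of $B(t)$.

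For the second condition I would integrate over $[t-h,t]$ the energy identity obtained at the start of the proof of Lemma~\ref{L32},
\begin{equation*}
\frac12\frac{d}{ds}\big(\|u(s)\|^2+2\|\nabla u(s)\|^2\big)+\|\nabla u(s)\|^2+\|\partial_s u(s)\|^2+\|\nabla\partial_s u(s)\|^2+\int_\Omega f(u)(u+\partial_s u)\,{\rm d}x=\int_\Omega\big(b(s,u_s)+g(s)\big)(u+\partial_s u)\,{\rm d}x,
\end{equation*}
and solve for $\int_{t-h}^t\|\partial_s u(s)\|^2\,{\rm d}s$. The boundary term $\frac12(\|u\|^2+2\|\nabla u\|^2)$ at $t$ and $t-h$ is $\le C\varrho_1(t)$ by the bound above applied also at $t-h$. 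For the nonlinearity I would use $\int_{t-h}^t\!\int_\Omega f(u)\partial_s u\,{\rm d}x\,{\rm d}s=\int_\Omega F(u(t))\,{\rm d}x-\int_\Omega F(u(t-h))\,{\rm d}x\le C\|\nabla u(t)\|^6+C\|\nabla u(t-h)\|^6+C\le C\varrho_1^3(t)+C$ via \eqref{1lem04} and \eqref{1lem11}, and bound $\int_{t-h}^t\!\int_\Omega f(u)u\,{\rm d}x\,{\rm d}s\le C\int_{t-h}^t(1+\|\nabla u(s)\|^6)\,{\rm d}s$ from \eqref{1eq07} and $\boldsymbol E^1\hookrightarrow L^6(\Omega)$; since \eqref{1lem11} also gives $\sup_{s\in[t-h,t]}\|\nabla u(s)\|^2\le C\varrho_1(t)$, this is $\le C\varrho_1^2(t)\int_{t-h}^t\|\nabla u(s)\|^2\,{\rm d}s+C\le C\varrho_1^3(t)+C$. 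The forcing terms I would split by Cauchy--Schwarz and Young so that $\|\partial_s u\|^2$, $\|\nabla\partial_s u\|^2$ are absorbed on the left, leaving $C\int_{t-h}^t\big(\|b(s,u_s)\|^2+\|g(s)\|^2_{-1}+\|\nabla u(s)\|^2\big)\,{\rm d}s$; here $\int_{t-h}^t\|g(s)\|^2_{-1}\,{\rm d}s\le CG_{-1}^2(t)$ by \eqref{1lem08}, hypothesis (IV) together with the Poincar\'e inequality gives $\int_{t-h}^t\|b(s,u_s)\|^2\,{\rm d}s\le C_b\lambda_1^{-1}\int_{t-2h}^t\|\nabla u(s)\|^2\,{\rm d}s\le C\varrho_1(t)$ by \eqref{1lem12} on $[t-2h,t-h]$ and $[t-h,t]$, and $\int_{t-h}^t\|\nabla u(s)\|^2\,{\rm d}s\le C\varrho_1(t)$. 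Collecting everything yields $\int_{t-h}^t\|\partial_s u(s)\|^2\,{\rm d}s\le C\varrho_1^3(t)+C\varrho_1(t)+CG_{-1}^2(t)=\bar R^2(t)$, i.e. $\|{\rm d}u_t/{\rm d}s\|_{L^2((-h,0);L^2(\Omega))}\le\bar R(t)$, and the inclusion $U(t,\tau)D(\tau)\subset B(t)$ holds for all $\tau\le\tau_0$.

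It remains to check $\widehat B\in\mathcal D$. Since $\mathbb{Y}_1\hookrightarrow\mathbb{Y}$ with $\|(v^0,\varphi)\|_{\mathbb{Y}}^2\le\max\{1,\lambda_1^{-1}\}\|(v^0,\varphi)\|_{\mathbb{Y}_1}^2$, one gets $B(t)\subset\overline{\mathbf B}_{\mathbb{Y}}(0,\varrho_B(t))$ with $\varrho_B(t)=\sqrt{\max\{1,\lambda_1^{-1}\}}\,R(t)$; and since $G_{-1}$ is nondecreasing and finite on $\mathbb{R}$, it is bounded on $(-\infty,t]$, so $e^{\sigma t}\varrho_B^6(t)$ is $e^{\sigma t}$ times a quantity bounded as $t\to-\infty$ and hence tends to $0$, i.e. $\varrho_B\in\mathcal R$.

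I expect the $\mathbb{Y}_1$-bound to be routine, essentially a rereading of Lemma~\ref{L32}, and the main obstacle to be the uniform bound for $\int_{t-h}^t\|\partial_s u(s)\|^2\,{\rm d}s$. The two delicate points there are the critical growth of $f$ — which forces one to use the identity $f(u)\partial_s u=\frac{d}{ds}F(u)$ rather than a crude product estimate, together with the $H_0^1\hookrightarrow L^6$ embedding and the $H_0^1$-bounds of Lemma~\ref{L32} — and the delay term $b(s,u_s)$, for which hypothesis (IV) must be invoked to trade $\int\|b\|^2$ for an integral of $\|u\|^2$ over the enlarged window $[t-2h,t]$ before Lemma~\ref{L32} can be applied again.
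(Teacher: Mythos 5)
Your argument is correct and follows the paper's strategy in all essentials: the $\mathbb{Y}_1$-bound by $R(t)$ is exactly the paper's rereading of \eqref{1lem11}--\eqref{1lem12}, the $\bar R(t)$-bound comes from a time-integrated energy estimate over $[t-h,t]$ with the delay term traded for $\int_{t-2h}^{t}\|\nabla u\|^2\,{\rm d}s$ via hypothesis (IV) and Poincar\'e, and the membership $\widehat B\in\mathcal D$ is checked the same way. The one place you diverge is the choice of multiplier for the derivative estimate: the paper tests \eqref{ine01} with $\partial_t u$ alone, so the nonlinearity enters only through $\frac{d}{dt}\int_\Omega F(u)\,{\rm d}x$ (see \eqref{2lem01}) and no term $\int_\Omega f(u)u\,{\rm d}x$ ever appears; you instead recycle the $u+\partial_t u$ identity from Lemma \ref{L32} and must additionally control $\int_{t-h}^{t}\!\int_\Omega f(u)u$, which you do correctly via \eqref{1eq07}, the $H_0^1\hookrightarrow L^6$ embedding and the uniform bound $\sup_{s\in[t-h,t]}\|\nabla u(s)\|^2\le C\varrho_1(t)$, at the price of an extra $C\varrho_1^3(t)$ that is harmless since $\bar R^2(t)$ already contains such a term. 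One small bookkeeping point: the quantity $\int_\Omega F(u(t))-\int_\Omega F(u(t-h))$ sits on the left of the integrated identity, so what you actually need is an upper bound on its negative, i.e.\ the upper bound \eqref{1lem04} at time $t-h$ together with the lower bound \eqref{1eq06} at time $t$ (the paper handles this the same way when it drops $\|\nabla u(t)\|^2+2\int_\Omega F(u(t))$ from the left); your stated one-sided inequality has the direction reversed, but the fix is immediate and does not change the final estimate $\int_{t-h}^{t}\|\partial_s u(s)\|^2\,{\rm d}s\le\bar R^2(t)$.
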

	\begin{proof}
	Firstly, we observe that for all $t \in \mathbb{R}$,
	\begin{equation}
		B(t) \subset \left\{\left(v^0,\varphi \right) \in \mathbb{Y} :\left\| \left(v^0,\varphi \right) \right\|_{\mathbb{Y}} \leq R(t) \right\},
	\end{equation}
	with
	\begin{equation}
		\lim\limits_{t \to -\infty} e^{\sigma t} R^{6}(t)=0,
	\end{equation}
	and so $\widehat{B} \in \mathcal{D}$.
	
	Then we are now concerned with the asymptotic estimate using $R(t)$ for fixed $t \in \mathbb{R}$. It may be proved as follows. By definition, we have
	\begin{equation}
		\left\| U(t,\tau)(u^0,\phi)\right\|^2_{\mathbb{Y}_1}=\left\| \nabla u(t) \right\|^2+\int_{t-h}^{t} \left\| \nabla u(s) \right\|^2 {\rm d}s.
	\end{equation}
	From \eqref{1lem11} and \eqref{1lem12}, for any $t-h \geq \tau$ and all $(u^0,\phi) \in \mathbb{Y}$, using the definition of $\varrho_1(t)$, we obtain
	\begin{equation*}
		\begin{aligned}
			&\left\| U(t,\tau)(u^0,\phi)\right\|^2_{\mathbb{Y}_1}\\
			&\leq C\left\{ e^{-\sigma (t-\tau)} \left \| (u^0,\phi) \right \|_{\mathbb{Y}}^{6}+G_{-1}^2(t)  +1\right\}+C \bar{\nu}^{-1}e^{\sigma h}\left\{ e^{-\sigma (t-\tau)} \left \| (u^0,\phi) \right \|_{\mathbb{Y}}^{6}+G_{-1}^2(t)  +1\right\}\\
			&\leq  C e^{-\sigma(t-\tau)} \left \| (u^0,\phi) \right \|_{\mathbb{Y}}^{6} \left(1+\bar{\nu}^{-1}e^{\sigma h}\right)+(1+\bar{\nu}^{-1}e^{\sigma h}) \varrho_1(t)\\
			&\leq C e^{-\sigma(t-\tau)} \left \| (u^0,\phi) \right \|_{\mathbb{Y}}^{6} \left(1+\bar{\nu}^{-1}e^{\sigma h}\right)+R^2(t).
		\end{aligned}
	\end{equation*}
	Hence, as $e^{\sigma \tau} \rightarrow 0$ when $\tau \rightarrow -\infty$, we find that
	\begin{equation}\label{2lem04}
		\left\| U(t,\tau)(u^0,\phi)\right\|^2_{\mathbb{Y}_1} \leq R^2(t).
	\end{equation}
Next, we consider the asymptotic estimate using $\bar{R}(t)$. We assume now that $t-2h \geq \tau$. Multiplying \eqref{ine01} by $\partial_t u$ and integrating it over $\Omega$, we arrive at
\begin{equation*}
	\left\| \partial_t u\right\|^2+\frac{1}{2} \frac{d}{dt} \left( \left\| \nabla u \right\|^2+2\int_{\Omega} F(u){\rm d}x\right) +\left\|  \nabla \partial_t u\right\|^2= \langle b(t,u_t),\partial_t u \rangle+\langle g(t),\partial_t u \rangle.
\end{equation*}
By the Cauchy and Young inequalities, we observe that
\begin{equation}\label{2lem01}
	 \left\| \partial_t u\right\|^2+ \frac{d}{dt}\left(\left\| \nabla u \right\|^2+\int_{\Omega} F(u){\rm d}x\right) \leq \left\| b(t,u_t) \right\|^2 +\left\| g(t) \right\|^2_{-1}.
\end{equation}
Integrating \eqref{2lem01} over $\left[t-h,t\right]$, we notice that
\begin{equation*}
	\begin{aligned}
		&\int_{t-h}^{t} \left\| \frac{d}{dt} u(s)\right\|^2 {\rm d}s +\left( \left\| \nabla u(t)\right\|^2+ 2\int_{\Omega} F(u(t)){\rm d}x \right)\\
		&\leq \left\| \nabla u(t-h) \right\|^2+2\int_{\Omega} F(u(t-h)){\rm d}x+\int_{t-h}^{t} \left\| b(s,u_s) \right\|^2 {\rm d}s +\int_{t-h}^{t} \left\| g(s) \right\|^2_{-1} {\rm d}s.
	\end{aligned}
\end{equation*}
From (\uppercase \expandafter{\romannumeral 2}), (\uppercase \expandafter{\romannumeral 4}) and the Poincar\'{e}'s inequality, it follows
\begin{equation*}
	\begin{aligned}
		&\int_{t-h}^{t} \left\| \frac{d}{dt} u(s)\right\|^2 {\rm d}s \\
		&\leq \left\| \nabla u(t-h) \right\|^2+2\int_{\Omega} F(u(t-h)){\rm d}x+\frac{C_b}{\lambda_1}\int_{t-2h}^{t} \left\| \nabla u(s) \right\|^2 {\rm d}s +\int_{t-h}^{t} \left\| g(s) \right\|^2_{-1} {\rm d}s.
	\end{aligned}
\end{equation*}
By \eqref{1lem04}, the Young inequality, we observe that 
\begin{equation*}
	\int_{t-h}^{t} \left\| \frac{d}{dt} u(s)\right\|^2 {\rm d}s \leq C\left(\left\| \nabla u(t-h) \right\|^{6}+1+\int_{t-2h}^{t} \left\| \nabla u(s) \right\|^2 {\rm d}s+\int_{t-h}^{t} \left\| g(s) \right\|^2_{-1} {\rm d}s\right).
\end{equation*}
Now, we estimate $\left\| \nabla u(t-h) \right\|^2$. Replacing $t$ by $t-h$ in \eqref{1lem11}, we obtain 
\begin{equation*}
	\left \| \nabla u(t-h) \right \|^2 \leq C e^{-\sigma(t-h-\tau)} \left \| (u^0,\phi) \right \|_{\mathbb{Y}}^{6}+C\left(1+G^2_{-1}(t-h)) \right).
\end{equation*}
Because $t-h \leq t$, $e^{\sigma h}>1$ and $G^2_{-1}(\cdot) \in \mathfrak{I} $, we have
\begin{equation*}
	\begin{aligned}
		\left \| \nabla u(t-h) \right \|^2  \leq C e^{-\sigma(t-\tau)} \left \| (u^0,\phi) \right \|_{\mathbb{Y}}^{6}+\varrho_1(t).
	\end{aligned}
\end{equation*}
Hence, using a convexity argument, we derive that
\begin{equation}\label{2lem02}
	\begin{aligned}
	\left \| \nabla u(t-h) \right \|^{6} & \leq \left( C e^{\sigma h}e^{-\sigma(t-\tau)} \left \| (u^0,\phi) \right \|_{\mathbb{Y}}^{6}+e^{\sigma h}\varrho_1(t) \right)^{3}\\
	& \leq Ce^{-3 \sigma(t-\tau)} \left \| (u^0,\phi) \right \|_{\mathbb{Y}}^{18}+\varrho_1^{3}(t).
	\end{aligned}
\end{equation}
Taking $2h$ in place of $h$ in \eqref{1lem12}, we get that for any $\tau \leq t-2h$,
\begin{equation}\label{2lem03}
	\int_{t-2h}^{t} \left\| \nabla u(s) \right\|^2 {\rm d}s \leq Ce^{-\sigma(t-\tau)} \left \| (u^0,\phi) \right \|_{\mathbb{Y}}^{6}+C\varrho_1(t).
\end{equation}
Since \eqref{1lem08}, it is not difficult to find that
\begin{equation}\label{2lem06}
	\int_{t-h}^{t} \left\| g(s) \right\|^2_{-1} {\rm d}s \leq h G^2_{-1}(t).
\end{equation}
From \eqref{2lem02}, \eqref{2lem03} and \eqref{2lem06}, it follows that for all $\tau \leq t-2h$ and all $(u^0,\phi)\in \mathbb{Y}$,
\begin{equation*}
	\begin{aligned}
		\int_{t-h}^{t} \left\| \frac{d}{dt} u(s)\right\|^2 {\rm d}s &\leq Ce^{-3 \sigma(t-\tau)} \left \| (u^0,\phi) \right \|_{\mathbb{Y}}^{18}+Ce^{-\sigma(t-\tau)} \left \| (u^0,\phi) \right \|_{\mathbb{Y}}^{6} \\
		&\quad+C\varrho_1^{3}(t)+C\varrho_1(t)+CG^2_{-1}(t).
	\end{aligned}
\end{equation*}
Hence, for all $\tau \leq t-2h$ and for any $(u^0,\phi) \in \mathbb{Y}$, we deduce that
\begin{equation*}
	\int_{t-h}^{t} \left\| \frac{d}{dt} u(s)\right\|^2 {\rm d}s \leq Ce^{-3\sigma(t-\tau)} \left \| (u^0,\phi) \right \|_{\mathbb{Y}}^{18}+Ce^{-\sigma(t-\tau)} \left \| (u^0,\phi) \right \|_{\mathbb{Y}}^{6} +\bar{R}^2(t).
\end{equation*}
Because of $e^{\sigma \tau} \rightarrow 0$ when $\tau \rightarrow -\infty$, we obtain
\begin{equation}\label{2lem05}
	\int_{t-h}^{t} \left\| \frac{d}{dt} u(s)\right\|^2 {\rm d}s \leq \bar{R}^2(t).
\end{equation}
Consequently, combining \eqref{2lem04} with \eqref{2lem05}, it is obvious that there exists a pullback $\mathcal{D}$-absorbing family $\widehat{B}=\{B(t)\}_{t \in \mathbb{R}}$ in $\mathbb{Y}=H_0^1(\Omega)\times L^2((-h,0);L^2(\Omega))$.
\end{proof}
In order to prove the process $U(\cdot,\cdot)$ is pullback $\mathcal{D}$-asymptotically compact in $\mathbb{Y}$, applying the ideas in \cite{wzl}, we decompose the nonlinearity as $f(u)=f_0(u)+f_1(u)$, where $f_0, f_1 \in C(\mathbb{R})$ and satisfy
\begin{equation}\label{2eq01}
	f_0(u)u \geq 0, \quad \left| f_0(u)-f_0(v) \right| \leq C \left| u-v \right| \left( \left| u \right|^4+\left| v \right|^4 \right),
\end{equation}
\begin{equation}\label{2eq02}
	\left| f_0(u)-f_0(v) \right| \leq C\left( \left| u \right|^5+\left| v \right|^5 +1 \right),
\end{equation}
\begin{equation}\label{2eq03}
	\lim_{\left| u \right| \to \infty} \inf  \frac{f_1(u)}{u} > -\lambda_1, \quad \left| f_1(u) \right| \leq C(1+\left| u \right|). 
\end{equation}
Hence, for each $\tau \in \mathbb{R}$ and $(u^0,\phi) \in \mathbb{Y}$, we can decompose the solution $U(t,\tau)(u^0,\phi)=(u(t),u_t)$ in to the sum
\begin{equation}
	U(t,\tau)(u^0,\phi)= \widehat{S}(t,\tau)(u^0,\phi)+ \widehat{P}(t,\tau)(u^0,\phi),
\end{equation}
where $\widehat{S}(t,\tau)(u^0,\phi)=(\hat{v}(t),0)$ and $\widehat{P}(t,\tau)(u^0,\phi)=(\hat{w}(t),\hat{w}_t)=(\hat{w}(t),u_t)$ solve the following equations, respectively:
\begin{equation}\label{ine02}
	\begin{cases}
		\partial_t \hat{v} +A \partial_t \hat{v} + A \hat{v} +f_0(\hat{v})=0 & in \  (\tau,\infty)\times\Omega,\\
		\hat{v}(t,x)=0 & on \ (\tau,\infty)\times\partial\Omega,\\
		\hat{v} (\tau,x)=u^0(x)&\tau \in \mathbb{R},x \in \Omega,\\
		\hat{v}(\tau+\theta,x)=0 & \theta \in(-h,0),x \in \Omega,
	\end{cases}
\end{equation}
and 
\begin{equation}\label{ine03}
	\begin{cases}
		\partial_t \hat{w} +A \partial_t \hat{w} + A \hat{w} =\hat{\varPhi} & in \  (\tau,\infty)\times\Omega,\\
		\hat{w}(t,x)=0 & on \ (\tau,\infty)\times\partial\Omega,\\
		\hat{w} (\tau,x)=0&\tau \in \mathbb{R},x \in \Omega,\\
		\hat{w}(\tau+\theta,x)=\phi(\theta,x)  & \theta \in(-h,0),x \in \Omega,
	\end{cases}
\end{equation}
where 
\begin{equation*}
	\hat{\varPhi}=b(t,\hat{w}_t)+g(t)-f_0(u)+f_0(\hat{v})-f_1(u).
\end{equation*}
\begin{Lemma}\label{L34}
	Assume that assumptions $\left(\mathbf{H1}\right)-\left(\mathbf{H3}\right)$ are satisfied. There exist positive constants $\bar{C}_1$, $\delta_1$ such that
	\begin{equation}\label{4lem01}
	\| \widehat{S}(t,\tau)(u^0,\phi) \|_{\mathbb{Y}} \leq Ce^{-\frac{\delta_1}{2}(t-\tau)} \left\| \hat{v}(\tau) \right\|_1, \quad \forall t \geq \tau.
	\end{equation} 
\end{Lemma}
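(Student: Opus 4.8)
The key observation is that the delay component of $\widehat S(t,\tau)(u^0,\phi)$ vanishes identically: since $\widehat S(t,\tau)(u^0,\phi)=(\hat v(t),0)$, the definition of the $\mathbb Y$-norm gives $\|\widehat S(t,\tau)(u^0,\phi)\|_{\mathbb Y}=\|\nabla\hat v(t)\|=\|\hat v(t)\|_1$. Hence the whole statement reduces to an exponential decay estimate $\|\nabla\hat v(t)\|\le \bar C_1\, e^{-\frac{\delta_1}{2}(t-\tau)}\|\hat v(\tau)\|_1$ for the solution $\hat v$ of the delay-free problem \eqref{ine02}, whose well-posedness is obtained as in (the argument behind) Lemma \ref{3lem01}.

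To this end I would test \eqref{ine02} with $\hat v$ in $L^2(\Omega)$. Using $\langle A\partial_t\hat v,\hat v\rangle=\langle\nabla\partial_t\hat v,\nabla\hat v\rangle=\tfrac12\tfrac{d}{dt}\|\nabla\hat v\|^2$, $\langle\partial_t\hat v,\hat v\rangle=\tfrac12\tfrac{d}{dt}\|\hat v\|^2$, $\langle A\hat v,\hat v\rangle=\|\nabla\hat v\|^2$, and discarding the nonlinear term via the sign condition $f_0(u)u\ge0$ from \eqref{2eq01} (the integral $\int_\Omega f_0(\hat v)\hat v\,dx$ being finite because \eqref{2eq02} together with $f_0(0)=0$ gives $|f_0(\hat v)\hat v|\le C(|\hat v|^6+|\hat v|)$ while $\boldsymbol E^1=H_0^1(\Omega)\hookrightarrow L^6(\Omega)$), one obtains
\begin{equation*}
\frac12\frac{d}{dt}\big(\|\hat v(t)\|^2+\|\nabla\hat v(t)\|^2\big)+\|\nabla\hat v(t)\|^2\le 0 .
\end{equation*}
By the generalized Poincar\'e inequality $\lambda_1\|\hat v\|^2\le\|\nabla\hat v\|^2$ one has $\|\nabla\hat v\|^2\ge\frac{\lambda_1}{1+\lambda_1}\big(\|\hat v\|^2+\|\nabla\hat v\|^2\big)$, so writing $\Lambda(t):=\|\hat v(t)\|^2+\|\nabla\hat v(t)\|^2$ and $\delta_1:=\frac{2\lambda_1}{1+\lambda_1}>0$, the inequality becomes $\frac{d}{dt}\Lambda(t)+\delta_1\Lambda(t)\le 0$. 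Gronwall's lemma yields $\Lambda(t)\le e^{-\delta_1(t-\tau)}\Lambda(\tau)$; since $\|\nabla\hat v(t)\|^2\le\Lambda(t)$ and $\Lambda(\tau)\le(1+\lambda_1^{-1})\|\nabla\hat v(\tau)\|^2=(1+\lambda_1^{-1})\|\hat v(\tau)\|_1^2$, taking square roots gives \eqref{4lem01} with $\bar C_1=\sqrt{1+\lambda_1^{-1}}$.

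The argument is essentially routine; the only steps deserving care are: (i) carrying out the above energy identity rigorously, i.e.\ first on the Faedo--Galerkin approximations of \eqref{ine02} and then passing to the limit (or using the regularity $\partial_t\hat v\in L^2((\tau,T);H_0^1(\Omega))$ and $\hat v\in C([\tau,T];H_0^1(\Omega))$ of the weak solution to justify the differentiation of $\|\hat v\|^2$ and $\|\nabla\hat v\|^2$); and (ii) checking that the nonlinearity is eliminated with the correct sign, which is precisely why the splitting $f=f_0+f_1$ is arranged so that $f_0(u)u\ge0$. I do not expect a genuine obstacle: the delay term is absent from \eqref{ine02}, the dissipation $\|\nabla\hat v\|^2$ directly controls the full $\mathbb Y$-norm of $(\hat v,0)$, and the only new feature relative to the non-delay setting is the bookkeeping of the product space, which is trivial here because the second component is zero.
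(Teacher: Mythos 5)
Your proposal is correct and follows essentially the same route as the paper: multiply \eqref{ine02} by $\hat v$, discard the nonlinear term via the sign condition $f_0(u)u\ge 0$ in \eqref{2eq01}, absorb the energy into the dissipation via the Poincar\'e inequality, and conclude by Gronwall together with the fact that the delay component of $\widehat S(t,\tau)(u^0,\phi)$ vanishes. The only difference is cosmetic: you make the admissible $\delta_1$ and the constant $\bar C_1$ explicit, whereas the paper simply takes $\delta_1$ small enough.
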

\begin{proof}
	Multiplying the first equation in \eqref{ine02} by $\hat{v}$ and integrating it over $\Omega$, it follows
	\begin{equation*}
	\frac{1}{2} \frac{d}{dt} \left( \left\| \hat{v} \right\|^2 + \left\| \nabla \hat{v} \right\|^2 \right)+ \left\| \nabla \hat{v} \right\|^2 +\int_{\Omega} f_0(\hat{v}) \hat{v} {\rm d}x=0 .
	\end{equation*}
Using \eqref{2eq01}, we have
\begin{equation*}
	\frac{d}{dt} \left( \left\| \hat{v} \right\|^2 + \left\| \nabla \hat{v} \right\|^2 \right)+ 2\left\| \nabla \hat{v} \right\|^2  \leq 0 .
\end{equation*}
We can choose $\delta_1$ small enough such that
\begin{equation*}
	\frac{d}{dt} \left( \left\| \hat{v} \right\|^2 + \left\| \nabla \hat{v} \right\|^2 \right)+ \delta_1 \left( \left\| \hat{v} \right\|^2 + \left\| \nabla \hat{v} \right\|^2 \right)  \leq 0.
\end{equation*}
	Then, Gronwall's inequality yields
	\begin{equation*}
		\left\| \hat{v}(t) \right\|^2 + \left\| \nabla \hat{v}(t) \right\|^2 \leq e^{-\delta_1(t-\tau)}\left(\left\| \hat{v}(\tau) \right\|^2 + \left\| \nabla \hat{v}(\tau) \right\|^2\right).
	\end{equation*}
Hence, by Poincar\'{e} inequality and the definition of $\| \widehat{S}(t,\tau)(u^0,\phi) \|_{\mathbb{Y}} $, we can get \eqref{4lem01} immediately.
\end{proof}
\begin{Lemma}\label{L35}
	Let assumptions $\left(\mathbf{H1}\right)-\left(\mathbf{H3}\right)$ be satisfied. Assume $g \in L^2_{loc}(\mathbb{R}; \boldsymbol{E}^{-1})$ and satisfies that
\begin{equation}\label{5lem11}
	G_{-2/3}(t):= \sup_{\tau \leq t} \left( \int_{\tau-1}^{\tau} \left\| g(s) \right\|^2_{-2/3} {\rm d}s\right)^{\frac{1}{2}}< \infty, \quad \forall t \in \mathbb{R}.
\end{equation}
	Then, there exist positive constants $\bar{C}_2$, $\bar{C}_3$ such that
\begin{equation}\label{5lem09}
	\| \widehat{P}(t,\tau)(u^0,\phi) \|^2_{\mathbb{K}^{4/3}} \leq \bar{C}_2\left( t-\tau +1\right) \left( G^{30}_{-2/3}(t)+1\right)(e^{\bar{C}_3\left(t-\tau +1\right)\left( G^{12}_{-1}(t)+1\right)}+1).
\end{equation}
\end{Lemma}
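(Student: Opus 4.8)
The plan is to bound the two components of $\widehat{P}(t,\tau)(u^0,\phi)=(\hat w(t),u_t)$ separately. For the history component, $\int_{t-h}^{t}\|u(s)\|^2\,{\rm d}s\le\lambda_1^{-1}\int_{t-h}^{t}\|\nabla u(s)\|^2\,{\rm d}s$ is already controlled by Lemma~\ref{L32} (estimate \eqref{1lem12}), and once $(u^0,\phi)$ is taken inside the pullback absorbing family $\widehat{B}$ of Lemma~\ref{L33} and $\tau$ is negative enough that the term $e^{-\sigma(t-\tau)}\|(u^0,\phi)\|_{\mathbb{Y}}^6$ is absorbed, this is $\le C(G_{-1}^2(t)+1)$, which is dominated by the right-hand side of \eqref{5lem09}. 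So everything reduces to estimating $\|\hat w(t)\|_{4/3}^2=\|A^{2/3}\hat w(t)\|^2$.

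For that I would run a fractional energy estimate directly on \eqref{ine03}. Multiplying the first equation of \eqref{ine03} by $A^{1/3}\hat w$ and integrating over $\Omega$ (rigorously on the Faedo--Galerkin truncations, then passing to the limit), the self-adjointness and positivity of $A$ give
\[\frac{1}{2}\frac{d}{dt}\big(\|A^{1/6}\hat w\|^2+\|A^{2/3}\hat w\|^2\big)+\|A^{2/3}\hat w\|^2=\langle\hat\varPhi,A^{1/3}\hat w\rangle .\]
Writing $Y(t)=\|A^{1/6}\hat w(t)\|^2+\|A^{2/3}\hat w(t)\|^2$ and using the generalized Poincar\'e inequality to dominate $Y$ by $\|A^{2/3}\hat w\|^2$, this becomes $\frac{d}{dt}Y+\delta Y\le 2\langle\hat\varPhi,A^{1/3}\hat w\rangle$ for some $\delta>0$ depending only on $\lambda_1$, with the crucial initial value $Y(\tau)=0$ since $\hat w(\tau)=0$.

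Next I would estimate the right-hand side term by term, recalling $\hat\varPhi=b(t,u_t)+g(t)-f_0(u)+f_0(\hat v)-f_1(u)$. The non-delayed force is handled by $\langle g(t),A^{1/3}\hat w\rangle=\langle A^{-1/3}g(t),A^{2/3}\hat w\rangle\le\|g(t)\|_{-2/3}\|A^{2/3}\hat w\|$, which is precisely why the sharper hypothesis \eqref{5lem11} is imposed; the delay term uses $\|b(t,u_t)\|\le L_b\|u_t\|_{L^2((-h,0);L^2(\Omega))}=L_b\big(\int_{t-h}^{t}\|u(s)\|^2\,{\rm d}s\big)^{1/2}$ (conditions (II) and (III)), hence is again controlled through Lemma~\ref{L32}; and $f_1$ is lower order since $|f_1(u)|\le C(1+|u|)$. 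After Cauchy--Schwarz and Young these three contribute $\varepsilon\|A^{2/3}\hat w\|^2$ (absorbed into $\delta Y$) plus a forcing bounded by $C\big(1+\|g(t)\|_{-2/3}^2+\int_{t-h}^{t}\|u(s)\|^2\,{\rm d}s\big)$. For the critical term I would write $f_0(u)-f_0(\hat v)=\big(\int_0^1 f_0'(\hat v+s\hat w)\,{\rm d}s\big)\hat w=:h\,\hat w$, with $|h|\le C(1+|u|^4+|\hat v|^4)$, and estimate, by H\"older with exponents $(3/2,18,18/5)$ together with the embeddings $\boldsymbol E^{4/3}\hookrightarrow L^{18}$ and $\boldsymbol E^{2/3}\hookrightarrow L^{18/5}$ (valid exactly at the target regularity $\boldsymbol E^{4/3}$),
\[|\langle h\hat w,A^{1/3}\hat w\rangle|\le\|h\|_{L^{3/2}}\|\hat w\|_{L^{18}}\|A^{1/3}\hat w\|_{L^{18/5}}\le C\big(1+\|u\|_1^4+\|\hat v\|_1^4\big)\|A^{2/3}\hat w\|^2\le C\big(1+\|u\|_1^4+\|\hat v\|_1^4\big)Y .\]
Thus this term enters as a Gronwall coefficient rather than as a forcing, which is the origin of the exponential factor in \eqref{5lem09}.

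Collecting everything I obtain $\frac{d}{dt}Y+\frac{\delta}{2}Y\le a(t)Y+b(t)$ with $a(t)\le C(1+\|u(t)\|_1^4+\|\hat v(t)\|_1^4)$ and $b(t)\le C\big(1+\|g(t)\|_{-2/3}^2+\int_{t-h}^{t}\|u(s)\|^2\,{\rm d}s\big)$; since $Y(\tau)=0$, Gronwall yields $Y(t)\le e^{\int_\tau^t a(\sigma)\,{\rm d}\sigma}\int_\tau^t b(s)\,{\rm d}s$. Finally I would bound $\int_\tau^t a$ using Lemma~\ref{L34} (so that the $\hat v$-contribution integrates to a constant, which inside $\widehat B$ is controlled by a power of $G_{-1}(t)$) and Lemma~\ref{L32} (so that $\sup_{s\in[\tau,t]}\|u(s)\|_1^{4}\le C(G_{-1}^2(t)+1)^2$), together with a convexity step analogous to \eqref{2lem02}, to reach $\int_\tau^t a\le\bar C_3(t-\tau+1)(G_{-1}^{12}(t)+1)$; and $\int_\tau^t b$ using the definitions of $G_{-1}$ and $G_{-2/3}$ (so that $\int_\tau^t\|g(s)\|_{-2/3}^2\,{\rm d}s\le(t-\tau+1)G_{-2/3}^2(t)$) and again Lemma~\ref{L32}, tracking the resulting powers, to reach $\int_\tau^t b\le\bar C_2(t-\tau+1)(G_{-2/3}^{30}(t)+1)$; this gives \eqref{5lem09}. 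The main obstacle is the critical nonlinearity: one must estimate $\langle f_0(u)-f_0(\hat v),A^{1/3}\hat w\rangle$ so that it consumes only a controllable amount of the top-order dissipation $\|A^{2/3}\hat w\|^2$ while leaving a coefficient depending solely on norms of $u$, $\hat v$, $\hat w$ already bounded by the earlier lemmas, which forces the H\"older exponents and the placement of the fractional derivative on $\hat w$ (rather than on $f_0(u)-f_0(\hat v)$, which lies only in $\boldsymbol E^{-1}$) to sit exactly at the $\boldsymbol E^{4/3}$ threshold dictated by the three-dimensional critical embedding $\boldsymbol E^1\hookrightarrow L^6$; the delay adds the auxiliary difficulty that $u_t$ is only an $L^2$-in-history quantity, handled via hypotheses (III)--(V) and the $L^2$-in-time estimate of $\nabla u$ from Lemma~\ref{L32}.
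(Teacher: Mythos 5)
Your proposal is correct and follows essentially the same route as the paper: the $A^{1/3}\hat w$ multiplier applied to \eqref{ine03}, the embeddings $\boldsymbol{E}^{4/3}\hookrightarrow L^{18}(\Omega)$, $\boldsymbol{E}^{2/3}\hookrightarrow L^{18/5}(\Omega)$, $\boldsymbol{E}^{1}\hookrightarrow L^{6}(\Omega)$ to turn the critical difference $f_0(u)-f_0(\hat v)$ into a Gronwall coefficient of order $\|u\|_1^4+\|\hat v\|_1^4$ controlled by Lemmas \ref{L32} and \ref{L34}, the zero initial datum $\hat w(\tau)=0$, and the same bookkeeping leading to the factors $(t-\tau+1)(G^{30}_{-2/3}(t)+1)$ and $e^{C(t-\tau+1)(G^{12}_{-1}(t)+1)}$. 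The only deviations are minor: the paper first derives the integrated estimate \eqref{5lem07} (testing with $\hat w$ and using hypothesis (IV)) to control both the $L^2((-h,0);L^2(\Omega))$ component and $\int_\tau^t\|b(s,\hat w_s)\|^2\,{\rm d}s$, where you instead invoke the Lipschitz condition (III) pointwise together with Lemma \ref{L32}; and your mean-value representation of $f_0(u)-f_0(\hat v)$ is not needed (and would require $f_0\in C^1$), since the bound you actually use is exactly \eqref{2eq01}.
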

\begin{proof}
	It can be seen from the definition that $\| \widehat{P}(t,\tau)(u^0,\phi) \|^2_{\mathbb{K}^{4/3}}= \left\| \hat{w}(t) \right\|^2_{4/3}+\int_{t-h}^{t} \left\| \hat{w}(s) \right\|^2 {\rm d}s$, we first estimate $\int_{t-h}^{t} \left\| \hat{w}(s) \right\|^2 {\rm d}s$.
	
	Multiplying the first equation in \eqref{ine03} by $\hat{w}$ and integrating it over $\Omega$, we have
	\begin{equation}\label{5lem01}
		\frac{1}{2} \frac{d}{dt}\left( \| \hat{w} \|^2 +\| \nabla \hat{w} \|^2 \right)+ \| \nabla \hat{w} \|^2 \leq \langle \hat{\varPhi}(t), \hat{w} \rangle.
	\end{equation}
	In light of \eqref{2eq01}-\eqref{2eq03} and the following Sobolev embedding
	\begin{equation*}
		\boldsymbol{E}^1 \hookrightarrow L^6(\Omega),
	\end{equation*}
	we deduce that 
		\begin{align}\label{5lem02}
			&| \langle \hat{\varPhi}(t), \hat{w} \rangle| \nonumber \\
			&\leq |\langle b(t,\hat{w}_t), \hat{w} \rangle |+| \langle g(t), \hat{w} \rangle |+| \langle f_0(u)-f_0(\hat{v}), \hat{w} \rangle |+| \langle f_1(u), \hat{w} \rangle| \nonumber \\
			&\leq \left\| g(t) \right\|_{-1}\left\| \hat{w} \right\|_1+\left\| b(t,\hat{w}_t) \right\|\left\| \hat{w} \right\|  +C\int_{\Omega}(\left| u \right|^5+\left| \hat{v} \right|^5 +1) \left| \hat{w}\right| {\rm d}x \nonumber\\
			&\quad + C\int_{\Omega}(1+\left| u \right| ) \left| \hat{w}\right| {\rm d}x \nonumber \\
			& \leq \left\| g(t) \right\|^2_{-1}+\frac{1}{4} \left\| \hat{w} \right\|^2_1+\left\| b(t,\hat{w}_t) \right\|^2 +\frac{1}{4} \left\| \hat{w} \right\|^2+C\left( \left\| u \right\|^5_{L^6}+\left\| \hat{v} \right\|^5_{L^6}+1\right)^2 \nonumber \\
			& \quad +\frac{1}{4} \left\| \hat{w} \right\|^2_{L^6}+C(1+\left\| u \right\|^2)+\frac{1}{4} \left\| \hat{w} \right\|^2 \nonumber\\
			&\leq \left\| g(t) \right\|^2_{-1}+\left\| b(t,\hat{w}_t) \right\|^2+C\left( \left\| u \right\|^{10}_{1}+\left\| \hat{v} \right\|^{10}_{1}+1\right)+\left( \frac{1}{2}+\frac{1}{2\lambda_1}\right)\left\| \hat{w} \right\|^2_1.
		\end{align}
	By \eqref{1lem11},\eqref{4lem01} and recall that $R( \cdot) \in \mathfrak{I}$, we get
		\begin{align}\label{5lem03}
		&\left\| u \right\|^{10}_{1}+\left\| \hat{v} \right\|^{10}_{1}+1 \nonumber\\
		&\leq C e^{-5\sigma(t-\tau)}R^{30}(\tau)+Ce^{-5\delta_1(t-\tau)}R^{10}(\tau)+C \left( G^{10}_{-1}(t)+1\right) \nonumber\\
		&\leq C \left(e^{-5\sigma(t-\tau)}+e^{-5\delta_1(t-\tau)}+1 \right) \left( G^{30}_{-1}(t)+1\right).
		\end{align}
	From \eqref{5lem01}, \eqref{5lem02} and \eqref{5lem03}, we arrive at 
	\begin{equation*}
		\begin{aligned}
		& \frac{d}{dt}\left( \| \hat{w} \|^2 +\| \nabla \hat{w} \|^2 \right)+ \left(1-\frac{1}{\lambda_1}\right)\| \nabla \hat{w} \|^2 \\
		&\leq 2\left\| g(t) \right\|^2_{-1}+2\left\| b(t,\hat{w}_t) \right\|^2+C \left(e^{-5\sigma(t-\tau)}+e^{-5\delta_1(t-\tau)}+1 \right) \left( G^{30}_{-1}(t)+1\right).
		\end{aligned}
	\end{equation*}
	Integrating this last estimate over $[\tau,t]$, we find that
	\begin{equation*}
		\begin{aligned}
			&\| \hat{w}(t) \|^2 +\| \nabla \hat{w}(t) \|^2+\left(1-\frac{1}{\lambda_1}\right) \int_{\tau}^{t}\| \nabla \hat{w}(s) \|^2 {\rm d}s \\
			&\leq \| \hat{w}(\tau) \|^2 +\| \nabla \hat{w}(\tau) \|^2+2 \int_{\tau}^{t} \left\| g(s) \right\|^2_{-1} {\rm d}s + 2  \int_{\tau}^{t} \left\| b(s,\hat{w}_s) \right\|^2 {\rm d}s \\
			&+  C\int_{\tau}^{t} \left(e^{-5\sigma(s-\tau)}+e^{-5\delta_1(s-\tau)} +1\right) \left( G^{30}_{-1}(s)+1\right) {\rm d}s.
		\end{aligned}
	\end{equation*}
Then, due to $\hat{w}(\tau)=0$, (\uppercase \expandafter{\romannumeral 2}) and  (\uppercase \expandafter{\romannumeral 4}), it follows that
	\begin{equation*}
	\begin{aligned}
		&\left(1-\frac{1}{\lambda_1}\right) \int_{\tau}^{t}\| \nabla \hat{w}(s) \|^2 {\rm d}s \\
		&\leq 2 \int_{\tau}^{t} \left\| g(s) \right\|^2_{-1} {\rm d}s + 2C_b  \int_{\tau}^{t} \left\| \hat{w}(s) \right\|^2 {\rm d}s+2C_b  \int_{\tau-h}^{\tau} \left\| \hat{w}(s) \right\|^2 {\rm d}s \\
		&+  C\int_{\tau}^{t} \left(e^{-5\sigma(s-\tau)}+e^{-5\delta_1(s-\tau)}+1 \right) \left( G^{30}_{-1}(s)+1\right) {\rm d}s.
	\end{aligned}
	\end{equation*}
Then 
	\begin{equation*}
	\begin{aligned}
		&\left(1-\frac{1+2C_b}{\lambda_1}\right) \int_{\tau}^{t}\| \nabla \hat{w}(s) \|^2 {\rm d}s \\
		&\leq C(t-\tau)G^2_{-1}(t) + C \left\| \phi \right\|^2_{L^2((h,0),L^2(\Omega))}  +C\left( t-\tau +1\right) \left( G^{30}_{-1}(t)+1\right)\\
		&\leq C(t-\tau)G^2_{-1}(t) +R^2(\tau)+C\left( t-\tau +1\right) \left( G^{30}_{-1}(t)+1\right).
	\end{aligned}
\end{equation*}
Hence
\begin{equation*}
	\int_{\tau}^{t}\| \nabla \hat{w}(s) \|^2 {\rm d}s \leq C\left( t-\tau +1\right) \left( G^{30}_{-1}(t)+1\right),
\end{equation*}
and when $t-h \geq \tau$
\begin{equation}\label{5lem07}
	\int_{t-h}^{t}\| \nabla \hat{w}(s) \|^2 {\rm d}s \leq C\left( t-\tau +1\right) \left( G^{30}_{-1}(t)+1\right).
\end{equation}
Next, we will estimate $\left\| \hat{w}(t) \right\|^2_{4/3}$. Multiplying the first equation in \eqref{ine03} by $A^{1/3}\hat{w}$ and integrating it over $\Omega$, we have
\begin{equation}\label{5lem04}
	\frac{1}{2} \frac{d}{dt}\left( \| A^{1/6}\hat{w} \|^2 +\| A^{2/3} \hat{w} \|^2 \right)+ \| A^{2/3}\hat{w} \|^2 \leq \langle \hat{\varPhi}(t), A^{1/3}\hat{w} \rangle.
\end{equation}
In light of \eqref{2eq01}-\eqref{2eq03} and the following Sobolev embeddings
\begin{equation}\label{5lem10}
	\boldsymbol{E}^{2/3} \hookrightarrow L^{18/5}(\Omega), \ \boldsymbol{E}^1 \hookrightarrow L^6(\Omega), \ \boldsymbol{E}^{4/3} \hookrightarrow L^{18}(\Omega), 
\end{equation}
we deduce that 
	\begin{align}\label{5lem05}
		&| \langle \hat{\varPhi}(t), A^{1/3}\hat{w} \rangle| \nonumber\\
		&\leq |\langle b(t,\hat{w}_t), A^{1/3}\hat{w} \rangle |+| \langle g(t), A^{1/3}\hat{w} \rangle |+| \langle f_0(u)-f_0(\hat{v}), A^{1/3}\hat{w} \rangle |+| \langle f_1(u),A^{1/3}\hat{w} \rangle| \nonumber\\
		&\leq \left\| b(t,\hat{w}_t) \right\| \| A^{1/3}\hat{w} \|+\left\| g(t) \right\|_{-2/3}\left\| \hat{w} \right\|_{4/3}C\int_{\Omega}\left| \hat{w}\right|(\left| u \right|^4+\left| \hat{v} \right|^4) | A^{1/3}\hat{w}| {\rm d}x \nonumber\\
		&\quad + C\int_{\Omega}(1+\left| u \right| ) | A^{1/3}\hat{w}| {\rm d}x \nonumber \\
		& \leq C\left\| g(t) \right\|^2_{-2/3}+C \left\| \hat{w} \right\|^2_{4/3}+\left\| b(t,\hat{w}_t) \right\|^2 +C \| A^{1/3}\hat{w} \|^2+C \left( 1+ \|u\| \right) \| A^{1/3}\hat{w}\| \nonumber\\
		&\quad +C\| \hat{w} \|_{L^{18}} \left( \| u\|^4_{L^6}+\| \hat{v}\|^4_{L^6}\right)\|A^{1/3} \hat{w} \|_{L^{18/5}} \nonumber\\
		&\leq C\left\| g(t) \right\|^2_{-2/3}+\left\| b(t,\hat{w}_t) \right\|^2+C\left( \| u\|^4_{1}+\| \hat{v}\|^4_{1}+1\right)\left\| \hat{w} \right\|^2_{4/3}+C.
	\end{align}
By \eqref{1lem11}, \eqref{4lem01} and recall that $R( \cdot) \in \mathfrak{I}$, we get
	\begin{align}\label{5lem06}
		&\left\| u \right\|^{4}_{1}+\left\| \hat{v} \right\|^{4}_{1}+1 \nonumber\\
		&\leq C e^{-2\sigma(t-\tau)}R^{12}(\tau)+Ce^{-2\delta_1(t-\tau)}R^{4}(\tau)+C \left( G^{4}_{-1}(t)+1\right) \nonumber\\
		&\leq C \left(e^{-2\sigma(t-\tau)}+e^{-2\delta_1(t-\tau)}+1 \right) \left( G^{12}_{-1}(t)+1\right).
	\end{align}
From\eqref{5lem04}, \eqref{5lem05} and \eqref{5lem06}, we have
\begin{equation*}
	\begin{aligned}
		&\frac{d}{dt}\left( \| \hat{w} \|^2_{1/3} +\|  \hat{w} \|^2_{4/3} \right)\\
		&\leq C \left(e^{-2\sigma(t-\tau)}+e^{-2\delta_1(t-\tau)}+1 \right) \left( G^{12}_{-1}(t)+1\right)\left( \| \hat{w} \|^2_{1/3} +\|  \hat{w} \|^2_{4/3} \right)\\
		&\quad +C\left\| g(t) \right\|^2_{-2/3}+\left\| b(t,\hat{w}_t) \right\|^2+C.
	\end{aligned}
\end{equation*}
Applying Gronwall's inequality and noting that  (\uppercase \expandafter{\romannumeral 2}), (\uppercase \expandafter{\romannumeral 4}) and \eqref{5lem07}, we arrive at
	\begin{align}\label{5lem08}
		 &\| \hat{w} \|^2_{1/3} +\|  \hat{w} \|^2_{4/3} \nonumber\\
		 &\leq C \left( \int_{\tau}^{t} \left(\left\| g(s) \right\|^2_{-2/3}+\left\| b(s,\hat{w}_s) \right\|^2+1\right){\rm d}s \right) e^{C \int_{\tau}^{t} \left(e^{-2\sigma(s-\tau)}+e^{-2\delta_1(s-\tau)}+1 \right) \left( G^{12}_{-1}(s)+1\right){\rm d}s}\nonumber\\
		 & \leq C \left( (t-\tau)G^{2}_{-2/3}(t)+(t-\tau)+\int_{\tau}^{t} \left\| b(s,\hat{w}_s) \right\|^2{\rm d}s \right)e^{C\left(t-\tau +1\right)\left( G^{12}_{-1}(t)+1\right)}\nonumber\\
		 &\leq C \left( (t-\tau)(G^{2}_{-2/3}(t)+1)+C\left( t-\tau +1\right) \left( G^{30}_{-1}(t)+1\right) +R^2(\tau) \right) e^{C\left(t-\tau +1\right)\left( G^{12}_{-1}(t)+1\right)}\nonumber\\
		 & \leq C\left( t-\tau +1\right) \left( G^{30}_{-2/3}(t)+1\right)e^{C\left(t-\tau +1\right)\left( G^{12}_{-1}(t)+1\right)}.
	\end{align}
Combining \eqref{5lem07} and \eqref{5lem08}, we conclude that
\begin{equation*}
	\begin{aligned}
		&\| \widehat{P}(t,\tau)(u^0,\phi) \|^2_{\mathbb{K}^{4/3}} \\
		&\leq C\left( t-\tau +1\right) \left( G^{30}_{-2/3}(t)+1\right)e^{C\left(t-\tau +1\right)\left( G^{12}_{-1}(t)+1\right)}+C\left( t-\tau +1\right) \left( G^{30}_{-1}(t)+1\right) \\
		&\leq C\left( t-\tau +1\right) \left( G^{30}_{-2/3}(t)+1\right)(e^{C\left(t-\tau +1\right)\left( G^{12}_{-1}(t)+1\right)}+1),
	\end{aligned}
\end{equation*}
which implies \eqref{5lem09}.
\end{proof}
Lemmas \ref{L34}, \ref{L35} imply that $\{U(t,\tau)\}$ is pullback $\mathcal{D}$-asymptotically compact in $\mathbb{K}^1$. Therefore, we conclude the followig result by Theorem \ref{Th27}.
\begin{Theorem}\label{Th36}
	The process $\{U(t,\tau)\}$ has a pullback $\mathcal{D}$-attractor $\mathcal{A}=\{A(t):t \in \mathbb{R}\} \in \mathbb{K}^1 $, and satisfies
	\begin{equation*}
		A(t)=\bigcap_{s \leq t} \overline{\bigcup_{\tau \leq s} U(t,\tau)B(\tau)}^{\mathbb{K}^1}.
	\end{equation*}
\end{Theorem}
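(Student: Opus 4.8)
The plan is to apply the abstract existence criterion, Theorem~\ref{Th27}: one has to verify that $\{U(t,\tau)\}$ is a norm-to-weak continuous process, that it admits a pullback $\mathcal{D}$-absorbing family lying in $\mathcal{D}$, and that it is pullback $\omega$-$\mathcal{D}$-limit compact; the representation of $A(t)$ asserted in Theorem~\ref{Th36} is then exactly the $\omega$-limit formula produced by Theorem~\ref{Th27}, the closures being taken in $\mathbb{K}^1=\mathbb{Y}$. The norm-to-weak continuity is free: the process was already observed to act continuously on $\mathbb{Y}$, and strong continuity implies the weak-convergence property required in the definition. The absorbing family is supplied by Lemma~\ref{L33}, whose proof also records that the constructed $\widehat{B}$ lies in $\mathcal{D}$, since $e^{\sigma t}R^{6}(t)\to0$ as $t\to-\infty$. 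Finally, by the equivalence between pullback $\mathcal{D}$-asymptotic compactness and pullback $\omega$-$\mathcal{D}$-limit compactness recalled in Section~\ref{sec2}, it only remains to prove that $\{U(t,\tau)\}$ is pullback $\mathcal{D}$-asymptotically compact in $\mathbb{K}^1$.

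For the asymptotic compactness I would fix $t\in\mathbb{R}$, $\widehat{D}\in\mathcal{D}$, a sequence $\tau_n\to-\infty$ and $x_n\in D(\tau_n)$, and argue by a two-step decomposition. First, since $G_{-1}(\cdot)$, and hence $R(\cdot)$, is nondecreasing, $R(t-r)\le R(t)$ for every $r\ge0$, so by Lemma~\ref{L34} I can fix $r=r(\varepsilon,t)$ large enough that $Ce^{-\delta_1 r/2}R(t)<\varepsilon$. For all sufficiently large $n$ the pullback $\mathcal{D}$-absorbing property gives $y_n:=U(t-r,\tau_n)x_n\in B(t-r)$ and $U(t,\tau_n)x_n\in B(t)$, and by the process identity $U(t,\tau_n)x_n=U(t,t-r)y_n$. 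Now I split this over the \emph{fixed} window $[t-r,t]$ as in \eqref{ine02}--\eqref{ine03}: $U(t,t-r)y_n=\widehat{S}(t,t-r)y_n+\widehat{P}(t,t-r)y_n$. By Lemma~\ref{L34}, $\|\widehat{S}(t,t-r)y_n\|_{\mathbb{K}^1}\le Ce^{-\delta_1 r/2}\|y_n\|_{\mathbb{Y}}\le Ce^{-\delta_1 r/2}R(t)<\varepsilon$, uniformly in $n$ (the first component of $y_n$ is bounded in $H_0^1(\Omega)$ by $\|y_n\|_{\mathbb{Y}}$). For $\widehat{P}(t,t-r)y_n=(\hat w_n(t),u_{t,n})$, with $r$ and $t$ frozen, Lemma~\ref{L35} bounds $\|\hat w_n(t)\|_{4/3}^2\le\|\widehat{P}(t,t-r)y_n\|_{\mathbb{K}^{4/3}}^2$ by a constant independent of $n$, so $\{\hat w_n(t)\}_n$ is bounded in $\boldsymbol{E}^{4/3}$ and therefore relatively compact in $\boldsymbol{E}^1$ by the compact embedding $\boldsymbol{E}^{4/3}\hookrightarrow\hookrightarrow\boldsymbol{E}^1$; while the history component $u_{t,n}$ is, for $n$ large, the second component of an element of $B(t)$, hence it lies in a set bounded in $L^2((-h,0);H_0^1(\Omega))$ with derivative bounded in $L^2((-h,0);L^2(\Omega))$, which by the Aubin--Lions--Simon compactness lemma is relatively compact in $L^2((-h,0);L^2(\Omega))$. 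Thus $\{\widehat{P}(t,t-r)y_n\}_n$ is relatively compact in $\mathbb{K}^1$, so $\{U(t,\tau_n)x_n\}$ lies, up to finitely many terms, in an $\varepsilon$-neighbourhood of a relatively compact set; since $\varepsilon$ was arbitrary, the sequence is totally bounded, hence precompact in the complete space $\mathbb{K}^1$. This establishes the required compactness, and Theorem~\ref{Th27} then yields the pullback $\mathcal{D}$-attractor with the stated formula.

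The crux of the argument is that the smoothing estimate of Lemma~\ref{L35} for the $\widehat{P}$-part is \emph{not} uniform as $\tau\to-\infty$ — it grows, in fact exponentially, in $t-\tau$ — so it cannot be invoked along the sequence $\tau_n$ directly; this is precisely why one first transports the data into the absorbing set $B(t-r)$ over a pullback time and only then runs the $\widehat{S}+\widehat{P}$ splitting on the fixed interval $[t-r,t]$, where the $\widehat{P}$-bound is a finite constant depending only on $r,t$ and the exponential decay of $\widehat{S}$, together with the monotonicity $R(t-r)\le R(t)$, can be made arbitrarily small by enlarging $r$. A secondary point requiring care is that the higher-order space $\mathbb{K}^{4/3}$ shares the delay factor $L^2((-h,0);L^2(\Omega))$ with $\mathbb{K}^1$, so no compactness of the history variable is produced by the regularity estimate itself; that compactness must instead be extracted from the extra bound on $d\varphi/ds$ built into $B(t)$ in Lemma~\ref{L33}, via the Aubin--Lions--Simon lemma.
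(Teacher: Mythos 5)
Your proposal is correct and takes essentially the same route as the paper: the absorbing family of Lemma \ref{L33}, the $\widehat{S}+\widehat{P}$ decomposition of Lemmas \ref{L34}--\ref{L35} to get pullback $\mathcal{D}$-asymptotic compactness, and then the abstract Theorem \ref{Th27}. The paper asserts the compactness in a single line, and your write-up simply supplies the details it omits (pulling back into $B(t-r)$ before splitting on the fixed window $[t-r,t]$, the compact embedding $\boldsymbol{E}^{4/3}\hookrightarrow\boldsymbol{E}^{1}$ for the first component, and the Aubin--Lions argument for the delay component based on the $\bar{R}(t)$ bound built into $B(t)$), all consistent with the paper's intended argument.
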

\section{Regularity}\label{sec4}
To establish the regularity of the pullback $\mathcal{D}$-attractor $\mathcal{A}=\{A(t):t \in \mathbb{R}\}$, we will use Lemma \ref{lem28}.

\begin{Lemma}\label{L41}
	Undeer assumptions $\left(\mathbf{H1}\right)-\left(\mathbf{H3}\right)$, assume $g(t) \in L^2_{loc}(\mathbb{R};\boldsymbol{E}^{-1})$ and \eqref{5lem11} holds true. Then $\bigcup_{s \leq t}A(s)$ is bounded in $\mathbb{K}^{4/3}=\boldsymbol{E}^{4/3} \times L^2((-h,0),L^2(\Omega))$ for every $t \in \mathbb{R}$. 
\end{Lemma}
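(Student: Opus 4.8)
The plan is to derive the boundedness of $\bigcup_{s\le t}A(s)$ in $\mathbb{K}^{4/3}$ from the invariance of the pullback $\mathcal{D}$-attractor together with the backward-uniform absorption of $\widehat{B}$ and the decomposition estimates in Lemmas \ref{L34} and \ref{L35}. The key point is that the absorbing family $\widehat{B}=\{B(t)\}$ constructed in Lemma \ref{L33} is semi-uniformly pullback $\mathcal{D}$-absorbing: by the monotonicity of $G_{-1}(\cdot)$ (it belongs to $\mathfrak{I}$) the radii $R(t),\bar R(t)$ are increasing in $t$, so for every $s\le t$ one has $B(s)\subset B(t)$ and $U(s,\tau)B(\tau)\subseteq B(s)$ for all sufficiently negative $\tau$. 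Hence $A(s)\subset B(s)\subset B(t)$ for all $s\le t$ (using $A(s)\subseteq\overline{\bigcup_{\sigma\le s}U(s,\sigma)B(\sigma)}\subseteq B(s)$ via Theorem \ref{Th36}), and it suffices to bound $\overline{\bigcup_{\sigma\le s}U(s,\sigma)B(\sigma)}$ in $\mathbb{K}^{4/3}$ uniformly for $s\le t$, or more directly to invoke invariance.

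The main step is: fix $t\in\mathbb R$ and let $s\le t$. By invariance, $A(s)=U(s,s-r)A(s-r)$ for every $r\ge 0$, and since $A(s-r)\subset B(s-r)$ we have, for any $z=(u^0,\phi)\in A(s-r)$,
\begin{equation*}
	U(s,s-r)z=\widehat S(s,s-r)z+\widehat P(s,s-r)z .
\end{equation*}
By Lemma \ref{L34}, $\|\widehat S(s,s-r)z\|_{\mathbb Y}\le Ce^{-\frac{\delta_1}{2}r}\|\hat v(s-r)\|_1\le Ce^{-\frac{\delta_1}{2}r}R(s-r)$, which tends to $0$ as $r\to\infty$ because $R(s-r)$ grows at most like a power while $e^{-\frac{\delta_1}{2}r}$ decays; here one uses $R(s-r)\le R(t)\cdot(\text{bounded growth})$, or more simply that $R$ is increasing so $R(s-r)\le R(s)\le R(t)$ for $r\ge 0$. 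For the regular part, Lemma \ref{L35} gives
\begin{equation*}
	\|\widehat P(s,s-r)z\|^2_{\mathbb K^{4/3}}\le \bar C_2\,(r+1)\bigl(G^{30}_{-2/3}(s)+1\bigr)\bigl(e^{\bar C_3(r+1)(G^{12}_{-1}(s)+1)}+1\bigr).
\end{equation*}
This bound blows up as $r\to\infty$, so one cannot simply send $r\to\infty$; instead fix a single value $r=r_0\ge h$ (say $r_0=h$, or $r_0$ large enough that the $\widehat S$-term is $\le 1$). Then for that fixed $r_0$,
\begin{equation*}
	\|U(s,s-r_0)z\|_{\mathbb K^{4/3}}\le \|\widehat S(s,s-r_0)z\|_{\mathbb K^{4/3}}+\|\widehat P(s,s-r_0)z\|_{\mathbb K^{4/3}} .
\end{equation*}
The second summand is bounded by the Lemma \ref{L35} estimate with $r=r_0$ and, since $s\le t$ and $G_{-2/3},G_{-1}\in\mathfrak I$ are increasing, by the same quantity with $s$ replaced by $t$; the first summand: from Lemma \ref{L34} and $\|\cdot\|_{\mathbb K^{4/3}}$ controlling $\|\cdot\|_1$ only on the second component which is $0$ for $\widehat S$ — here one needs that $\widehat S(s,s-r_0)z=(\hat v(s),0)$ with $\hat v(s)\in\boldsymbol E^1$, so we must in fact regularize $\hat v$ too, or absorb it. A cleaner route: note $\widehat S(s,s-r_0)z=U(s,s-r_0)z-\widehat P(s,s-r_0)z$, hence $\hat v(s)=u(s)-\hat w(s)$ with $u(s)\in\boldsymbol E^1$ (from Lemma \ref{L32}/\ref{L33}, $\|u(s)\|_1\le R(t)$) and $\hat w(s)\in\boldsymbol E^{4/3}$ (from Lemma \ref{L35}); but this only gives $\hat v(s)\in\boldsymbol E^1$, not $\boldsymbol E^{4/3}$. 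Therefore the correct argument must run $\widehat S$ to the $\mathbb Y$-norm and use that $A(s)$, being invariant and inside the $\mathbb Y_1$-bounded set $B(s)$, already sits in $\mathbb Y_1$; then one more application of Lemma \ref{L35}-type smoothing from time $s-r_0$ (with data in $B(s-r_0)\subset\mathbb Y_1$) upgrades the $\widehat P$-part to $\mathbb K^{4/3}$ while the $\widehat S$-part, decaying in $\mathbb Y\supset$... ах — the delicacy is precisely that $\mathbb K^{4/3}$ does not embed from $\mathbb Y$, so the $\widehat S$-piece must be shown to be small, not merely bounded, in $\mathbb K^{4/3}$.

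I therefore expect the \textbf{main obstacle} to be handling the $\widehat S$-component: Lemma \ref{L34} only bounds it in $\mathbb Y$, whereas the target space $\mathbb K^{4/3}$ is strictly stronger. The resolution I anticipate in the paper is to choose $r_\ast\ge h$ large and split $U(s,s-r)=U(s,s-r_\ast)U(s-r_\ast,s-r)$: the inner propagator maps $B(s-r)$ into the $\mathbb Y_1$-bounded set $B(s-r_\ast)$ for $r\ge r_\ast$ plus an exponentially small $\widehat S$-error, and then Lemma \ref{L35} applied over the \emph{fixed} window $[s-r_\ast,s]$ with $\mathbb Y_1$-bounded (indeed $\mathbb Y$-bounded suffices since Lemma \ref{L35} only needs $(u^0,\phi)\in\mathbb Y$) initial data yields a bound of $\|U(s,s-r_\ast)(\cdot)\|_{\mathbb K^{4/3}}$ depending only on $r_\ast$, on $G_{-2/3}(s)\le G_{-2/3}(t)$ and on $G_{-1}(s)\le G_{-1}(t)$. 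Passing $r\to\infty$ along the invariant sets $A(s-r)$ (each inside $B(s-r)$, hence inside the domain where Lemmas \ref{L34}–\ref{L35} apply), the $\widehat S$-contribution vanishes and one obtains
\begin{equation*}
	\|a\|_{\mathbb K^{4/3}}\le \bar C_2\,(r_\ast+1)\bigl(G^{30}_{-2/3}(t)+1\bigr)\bigl(e^{\bar C_3(r_\ast+1)(G^{12}_{-1}(t)+1)}+1\bigr)=:\mathcal M(t)\qquad\forall\,a\in\textstyle\bigcup_{s\le t}A(s),
\end{equation*}
which is the asserted bound. The routine verifications left are: (i) $R,\bar R,G_{-1},G_{-2/3}$ increasing so all the "replace $s$ by $t$" steps are legitimate; (ii) the $\widehat S$-term is small in $\mathbb Y$ and, after composing with the fixed-window regularizing step, contributes only an exponentially decaying error in $\mathbb K^{4/3}$ as well (because that fixed window is a continuous, hence bounded, map $\mathbb Y\to\mathbb K^{4/3}$ on bounded sets by Lemma \ref{L35}); (iii) closedness of $\overline{\mathbf B}_{\mathbb K^{4/3}}(\mathcal M(t))$ in $\mathbb K^1$, so the bound survives the closure in Theorem \ref{Th36}.
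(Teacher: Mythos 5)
Your reduction to a fixed window $[s-r_\ast,s]$ does not close the argument, and this is precisely the gap. With the decomposition of Lemmas \ref{L34}--\ref{L35} the regular part $\widehat P$ starts from \emph{zero} data, so its $\mathbb{K}^{4/3}$-bound grows exponentially with the window length $r_\ast$, while the rough part $\widehat S$ is merely of size $Ce^{-\frac{\delta_1}{2}r_\ast}R(t)$ in $\mathbb{K}^1$: for fixed $r_\ast$ this error does not tend to $0$ as $r\to\infty$, and $\widehat S(s,s-r_\ast)$ need not take values in $\mathbb{K}^{4/3}$ at all (the equation does not smooth, as you yourself note). Hence what your argument actually yields is only
\begin{equation*}
	dist_{\mathbb{K}^1}\bigl(A(s),\overline{\mathbf{B}}_{\mathbb{K}^{4/3}}(\mathcal{M}(t,r_\ast))\bigr)\le Ce^{-\frac{\delta_1}{2}r_\ast}R(t),
	\qquad \mathcal{M}(t,r_\ast):=\bar C_2(r_\ast+1)\bigl(G^{30}_{-2/3}(t)+1\bigr)\bigl(e^{\bar C_3(r_\ast+1)(G^{12}_{-1}(t)+1)}+1\bigr),
\end{equation*}
in which the error can be made small only by letting $r_\ast\to\infty$, which simultaneously sends the radius $\mathcal{M}(t,r_\ast)$ to infinity. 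This never gives containment of $A(s)$ in a fixed $\mathbb{K}^{4/3}$-ball; your final display ``the $\widehat S$-contribution vanishes and $\|a\|_{\mathbb{K}^{4/3}}\le\mathcal{M}(t)$'' conflates the two parameters $r$ and $r_\ast$, and the intermediate assertion that Lemma \ref{L35} bounds $\|U(s,s-r_\ast)\cdot\|_{\mathbb{K}^{4/3}}$ (rather than only the $\widehat P$-component) is false.

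The missing ingredient is the one Lemma \ref{lem28} is built for: a decomposition in which the regular part carries an \emph{arbitrary} initial datum $z_\tau\in\mathbb{K}^{4/3}$ and is \emph{contracted} in the $\mathbb{K}^{4/3}$-norm. The paper re-decomposes $U(t,\tau)x_\tau=S_{x_\tau}(t,\tau)y_\tau+P_{x_\tau}(t,\tau)z_\tau$ for every splitting $x_\tau=y_\tau+z_\tau$ with $z_\tau\in\mathbb{K}^{4/3}$ (problems \eqref{ine04}--\eqref{ine05}, where $w(\tau)=w_\tau\neq 0$) and proves \eqref{6lem07}, namely $\|P_{x_\tau}(t,\tau)z_\tau\|_{4/3}\le\beta(t,t-\tau)\|z_\tau\|_{4/3}+\alpha(t,t-\tau)$ with $\beta(t,r_\ast)<1$ for $r_\ast$ large; establishing this estimate is the bulk of the proof and itself uses Lemmas \ref{L32}--\ref{L35} to control the coefficients. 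It is exactly this contraction, inserted into the iteration hidden in Lemma \ref{lem28} (successive windows of length $r_\ast$, the regular output of each window reused as the datum $z$ of the next), that produces attraction to a \emph{fixed} ball $\overline{\mathbf{B}}_{\mathbb{K}^{4/3}}(\rho_{r_\ast}(t))$ at the geometric rate $\gamma(r_\ast)^{r/r_\ast}$. Then invariance $A(t)=U(t,t-r)A(t-r)\subseteq U(t,t-r)B(t-r)$, letting $r\to\infty$, and the fact that closed $\mathbb{K}^{4/3}$-balls are closed in $\mathbb{K}^1$ give $A(t)\subseteq\overline{\mathbf{B}}_{\mathbb{K}^{4/3}}(\hat\rho_{r_\ast}(T_0))$ for all $t\le T_0$. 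Your skeleton (decompose, exponential decay of the rough part, monotonicity of $G_{-1},G_{-2/3}$, invariance) is right, but without the nonzero-data regular problem and the $\beta<1$ contraction the passage to a fixed ball cannot be made.
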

\begin{proof}
	Let $\tau \in \mathbb{R}$ and $x_{\tau}=(u^0,\phi) \in B(\tau)$. For any $y_{\tau} \in B(\tau)$ and $z_{\tau} \in \mathbb{K}^{4/3}$ satisfying $x_{\tau}=y_{\tau}+z_{\tau}$, we decompose the solution $U(t,\tau)x_{\tau}=(u(t),u_t)$ into the sum
	\begin{equation*}
		U(t,\tau)x_{\tau}=S_{x_{\tau}}(t,\tau)y_{\tau}+P_{x_{\tau}}(t,\tau)z_{\tau},
	\end{equation*}
where $S_{x_{\tau}}(t,\tau)y_{\tau}=(v(t),0)$ and $P_{x_{\tau}}(t,\tau)z_{\tau}=(w(t),w_t)=(w(t),u_t)$ solve the following problems, respectively
\begin{equation}\label{ine04}
	\begin{cases}
		\partial_t v +A \partial_t v + A v +f_0(v)=0 & in \  (\tau,\infty)\times\Omega,\\
		v(t,x)=0 & on \ (\tau,\infty)\times\partial\Omega,\\
		v (\tau,x)=v_{\tau}&\tau \in \mathbb{R},x \in \Omega,\\
		v(\tau+\theta,x)=0 & \theta \in(-h,0),x \in \Omega,
	\end{cases}
\end{equation}
and 
\begin{equation}\label{ine05}
	\begin{cases}
		\partial_t w +A \partial_t w + A w =\varPhi & in \  (\tau,\infty)\times\Omega,\\
		w(t,x)=0 & on \ (\tau,\infty)\times\partial\Omega,\\
		w (\tau,x)=w_{\tau}&\tau \in \mathbb{R},x \in \Omega,\\
		w(\tau+\theta,x)=\phi(\theta,x)  & \theta \in(-h,0),x \in \Omega,
	\end{cases}
\end{equation}
where
\begin{equation*}
	\varPhi=b(t,w_t)+g(t)-f_0(u)+f_0(v)-f_1(u) \quad \quad \text{and} \quad \quad v_{\tau}+w_{\tau}=u(\tau)=u^0.
\end{equation*}
From the above construction, we see that assumption $i)$ of Lemma \ref{lem28} holds.

Multiplying the first equation in \eqref{ine04} by $v$ and integrating it over $\Omega$, similar completely to \eqref{4lem01}, we have 
\begin{equation}\label{6lem10}
	\left\| S_{x_{\tau}}(t,\tau)y_{\tau} \right\|_{\mathbb{K}^1} \leq \gamma(t-\tau) \| y_{\tau} \|_{\mathbb{K}^1},
\end{equation}
where 
\begin{equation}
\gamma(t-\tau)=Ce^{-\frac{\delta_1}{2}(t-\tau)}.
\end{equation}
This implies assumption $ii)$ of Lemma \ref{lem28} holds.

Next, we will verify assumption $iii)$ of Lemma \ref{lem28}. By the definition, we know that $\| P_{x_{\tau}}(t,\tau)z_{\tau} \|^2_{\mathbb{K}^{4/3}} \\= \left\| w(t) \right\|^2_{4/3}+\int_{t-h}^{t} \left\| w(s) \right\|^2 {\rm d}s$. Above all, we estimate $\int_{t-h}^{t} \left\| w(s) \right\|^2 {\rm d}s$. Multiplying the first equation in \eqref{ine05} by $w$ and integrating it over $\Omega$, it follows that
\begin{equation}
	\frac{1}{2} \frac{d}{dt} \left( \left\|w \right\|^2+ \left\| \nabla w \right\|^2 \right)+ \left\| \nabla w \right\|^2 \leq \langle \varPhi(t), w \rangle.
\end{equation}
Same as \eqref{5lem02} and \eqref{5lem03}, we have 
\begin{equation}
	| \langle \varPhi(t), w \rangle | \leq \left\| g(t) \right\|^2_{-1}+ \left\| b(t,w_t) \right\|^2+ C \left( \|u \|^{10}_1+ \|v\|^{10}_1+1\right)+\left( \frac{1}{2} +\frac{1}{2\lambda_1} \right) \| w\|^2_1,
\end{equation}
and 
\begin{equation}
	\|u\|^{10}_1+\|v\|^{10}_1+1 \leq C \left( e^{-5 \sigma(t-\tau)}+e^{-5 \delta_1 (t-\tau)}+1\right)\left(G^{30}_{-1}(t)+1\right).
\end{equation}
We arrive at
	\begin{equation*}
	\begin{aligned}
		& \frac{d}{dt}\left( \| w \|^2 +\| \nabla w \|^2 \right)+ \left(1-\frac{1}{\lambda_1}\right)\| \nabla w \|^2 \\
		&\leq 2\left\| g(t) \right\|^2_{-1}+2\left\| b(t,w_t) \right\|^2+C \left(e^{-5\sigma(t-\tau)}+e^{-5\delta_1(t-\tau)}+1 \right) \left( G^{30}_{-1}(t)+1\right).
	\end{aligned}
\end{equation*}
	Integrating above estimate over $[\tau,t]$, we find that
\begin{equation*}
	\begin{aligned}
		&\| w(t) \|^2 +\| \nabla w(t) \|^2+\left(1-\frac{1}{\lambda_1}\right) \int_{\tau}^{t}\| \nabla w(s) \|^2 {\rm d}s \\
		&\leq \| w(\tau) \|^2 +\| \nabla w(\tau) \|^2+2 \int_{\tau}^{t} \left\| g(s) \right\|^2_{-1} {\rm d}s + 2  \int_{\tau}^{t} \left\| b(s,w_s) \right\|^2 {\rm d}s \\
		&+  C\int_{\tau}^{t} \left(e^{-5\sigma(s-\tau)}+e^{-5\delta_1(s-\tau)} +1\right) \left( G^{30}_{-1}(s)+1\right) {\rm d}s.
	\end{aligned}
\end{equation*}
Similar  to \eqref{5lem07} and noting that the continuous embeddings $\boldsymbol{E}^s \hookrightarrow \boldsymbol{E}^r$ for any $s>r$, when $t-h \geq \tau$, we have
\begin{equation}\label{6lem01}
	\int_{t-h}^{t}\| \nabla w(s) \|^2 {\rm d}s \leq C\left( t-\tau +1\right) \left( G^{30}_{-1}(t)+1\right)+C\| w(\tau) \|^2_{4/3}.
\end{equation}
Next, we will estimate $\left\| w(t) \right\|^2_{4/3}$. Multiplying the first equation in \eqref{ine03} by $A^{1/3}w$ and integrating it over $\Omega$, we observe that
\begin{equation}
	\frac{1}{2} \frac{d}{dt}\left( \| A^{1/6} w \|^2+ \| A^{2/3} w \|^2 \right)+\| A^{2/3}w\|^2 \leq \langle \varPhi(t), A^{1/3} w \rangle.
\end{equation}
In light of \eqref{2eq01}-\eqref{2eq03} and \eqref{5lem10}, we deduce that
	\begin{align}
		& | \langle \varPhi(t),A^{1/3}w \rangle| \nonumber \\
		&\leq | \langle g(t), A^{1/3}w \rangle|+| \langle b(t,w_t),A^{1/3}w \rangle |+ | \langle f_0(u)-f_0(v), A^{1/3}w \rangle| +| \langle f_1(u), A^{1/3}w \rangle  \nonumber\\
		&\leq \|g(t) \|_{-2/3} \|w \|_{4/3}+ \| b(t,w_t) \|^2+\frac{1}{4} \| A^{1/3} w\|^2+C(1+\|u \|)\|A^{1/3}w\| \nonumber \\
		&\quad +C\int_{\Omega} |w| (|u|^4+|v|^4)|A^{1/3}w| {\rm d}x \nonumber\\
		& \leq \|g(t) \|_{-2/3} \|w \|_{4/3}+ \| b(t,w_t) \|^2+\frac{1}{4} \| A^{1/3} w\|^2+C(1+\|u \|)\|A^{1/3}w\| \nonumber \\
		&\quad +C\int_{\Omega} |w| (|\hat{v}|^4+|v|^4)|A^{1/3}w| {\rm d}x +C\int_{\Omega} (|u|+|v|) | \hat{w}|^4 |A^{1/3}w| {\rm d}x\nonumber \\
		& \leq \|g(t) \|_{-2/3} \|w \|_{4/3}+ \| b(t,w_t) \|^2+\frac{1}{4} \| A^{1/3} w\|^2+C(1+\|u \|)\|A^{1/3}w\| \nonumber \\
		&\quad +C\|w\|_{L^{18}}(\|\hat{v}\|^4_{L^6}+\|v\|^4_{L^6})\| A^{1/3} w\|_{L^{18/5}}+C( \|u\|_{L^2}+\|v\|_{L^2}) \|\hat{w} \|^4_{L^{18}} \|A^{1/3}w\|_{L^{18/5}} \nonumber \\
		&\leq \|g(t) \|_{-2/3} \|w \|_{4/3}+ \| b(t,w_t) \|^2+\frac{1}{4} \| w\|^2_{4/3}+C(1+\|u \|_1)\|w\|_{4/3}  \nonumber \\
		&\quad +C( \|\hat{v}\|^4_1+\|v\|^4_1)\|w\|^2_{4/3}+C(\|u\|_1+\|v\|_1)\|\hat{w}\|^4_{4/3} \|w\|_{4/3} \nonumber  \\
		&\leq \left( \frac{1}{2}+C(\|\hat{v}\|^4_1+\|v\|^4_1)\right)\| w \|^2_{4/3}+C((\|u\|^2_1+\|v\|^2_1)\|\hat{w}\|^8_{4/3}+\|u\|^2_1) \nonumber   \\
		&\quad +C(\|g(t) \|^2_{-2/3}+\| b(t,w_t) \|^2+1).
	\end{align}
Noting that $G_{-1}(t) \leq CG_{-2/3}(t)$, by Lemma \ref{L32}-\ref{L35}, we have
\begin{equation}
	C(\|\hat{v}\|^4_1+\|v\|^4_1) \leq h(t,t-\tau),
\end{equation}
\begin{equation}
	C((\|u\|^2_1+\|v\|^2_1)\|\hat{w}\|^8_{4/3}+\|u\|^2_1) \leq Q(t,t-\tau),
\end{equation}
where
\begin{equation*}
	h(t,t-\tau)=Ce^{-2 \delta_1(t-\tau)}(G^4_{-2/3}(t)+1),
\end{equation*}
\begin{equation*}
	Q(t,t-\tau)=C(t-\tau +1)^4(G^{126}_{-2/3}(t)+1)(e^{C(t-\tau+1)(G^{12}_{-2/3}(t)+1)}+1),
\end{equation*}
for some constant $C>0$ large enough.

Obviously, there exits a constant $\delta_2>0$ small enough such taht 
\begin{equation*}
	\begin{aligned}
	\frac{d}{dt} ( \|w(t)\|^2_{1/3}+\| w(t)\|^2_{4/3}) &\leq (h(t,t-\tau)-\delta_2)( \|w(t)\|^2_{1/3}+\| w(t)\|^2_{4/3})\\
	&\quad +C(Q(t,t-\tau)+\|g(t) \|^2_{-2/3}+\| b(t,w_t) \|^2+1).
	\end{aligned}
\end{equation*}
Then, from Gronwall's inequality and $Q(\cdot,\cdot) \in \widehat{\mathfrak{I}}$, it derive that
	\begin{align}\label{6lem02}
		\|w(t)\|^2_{1/3}+\| w(t)\|^2_{4/3} & \leq \left(( \|w(\tau)\|^2_{1/3}+\| w(\tau)\|^2_{4/3})+\int_{\tau}^{t}C(Q(s,s-\tau)+1)  {\rm d}s\right. \nonumber \\
		&\quad \left.+C\int_{\tau}^{t} \|g(s) \|^2_{-2/3} {\rm d}s+C\int_{\tau}^{t}\| b(s,w_s) \|^2 {\rm d}s\right)e^{\int_{\tau}^{t} (h(s,s-\tau)-\delta_2){\rm d}s}.
	\end{align}
Due to \eqref{6lem01},(\uppercase \expandafter{\romannumeral 2}) and (\uppercase \expandafter{\romannumeral 4}), we observe that
\begin{equation}\label{6lem03}
	e^{\int_{\tau}^{t} h(s,s-\tau){\rm d}s}	\leq e^{(G^4_{-2/3}(t)+1) \int_{\tau}^{t}Ce^{-2 \delta_1(t-\tau)} {\rm d}s} \leq e^{C(G^4_{-2/3}(t)+1)},
\end{equation}
\begin{equation}\label{6lem04}
	e^{-\delta_2(t-\tau)} \int_{\tau}^{t} \|g(t) \|^2_{-2/3} {\rm d}s\leq \int_{\tau}^{t}e^{-\delta_2(t-s)} \|g(t) \|^2_{-2/3} {\rm d}s \leq C G^2_{-2/3}(t),
\end{equation}
\begin{equation}\label{6lem05}
	e^{-\delta_2(t-\tau)} \int_{t-h}^{t}\| \nabla w(s) \|^2 {\rm d}s \leq C\left( t-\tau +1\right)e^{-\delta_2(t-\tau)} \left( G^{30}_{-1}(t)+1\right)+Ce^{-\delta_2(t-\tau)}\| w(\tau) \|^2_{4/3}.
\end{equation}
We now can find that
\begin{equation*}
	C\left( t-\tau +1\right)e^{-\delta_2(t-\tau)} \left( G^{30}_{-1}(t)+1\right) \leq Q(t,t-\tau).
\end{equation*}
Substituting \eqref{6lem03}-\eqref{6lem05} into \eqref{6lem02}, we derive that 
	\begin{align}\label{6lem06}
		\| w(t)\|^2_{4/3} &\leq Ce^{-\delta_2 (t-\tau) }e^{C(G^4_{-2/3}(t)+1)}\| w(\tau)\|^2_{4/3} \nonumber \\
		& \quad +C(1+Q(t,t-\tau)+G^2_{-2/3}(t))e^{C(G^4_{-2/3}(t)+1)}.
	\end{align}
 Combining \eqref{6lem01} and \eqref{6lem06} and using $\|z_{\tau} \|^2_{4/3}=\| w(\tau)\|^2_{4/3}+\left\| \phi \right\|_{L^2((-h,0);L^2(\Omega))}^{2}$, we have
\begin{equation*}
	\begin{aligned}
		&\| P_{x_{\tau}}(t,\tau)z_{\tau} \|^2_{4/3}\\
		&\leq Ce^{-\delta_2 (t-\tau) }e^{C(G^4_{-2/3}(t)+1)}\| w(\tau)\|^2_{4/3}+C\| w(\tau) \|^2_{4/3}\\
		& \quad +C(1+Q(t,t-\tau)+G^2_{-2/3}(t))e^{C(G^4_{-2/3}(t)+1)}+C\left( t-\tau +1\right) \left( G^{30}_{-1}(t)+1\right).
	\end{aligned}
\end{equation*}
Then, we can conclude that
\begin{equation}\label{6lem07}
	\| P_{x_{\tau}}(t,\tau)z_{\tau} \|_{4/3} \leq \beta(t,t-\tau) \|z_{\tau} \|_{4/3}+\alpha(t,t-\tau),
\end{equation}
where 
\begin{equation*}
	\beta(t,t-\tau)=Ce^{-\frac{\delta_2(t-\tau)}{2}}e^{C(G^4_{-2/3}(t)+1)},
\end{equation*}
\begin{equation*}
	\alpha(t,t-\tau)=C(1+Q^{1/2}(t,t-\tau)+G_{-2/3}(t))e^{C(G^4_{-2/3}(t)+1)},
\end{equation*}
for some $C>0$ large enough.

Since $G_{-2/3}(\cdot) \in \mathfrak{I}$ and $Q(\cdot,\cdot) \in \widehat{\mathfrak{I}}$, we know that $\beta(\cdot,\cdot) \in \widehat{\mathfrak{D}}$ and $\alpha(\cdot,\cdot) \in \widehat{\mathfrak{I}}$. Hence, taking $\tau=t-r(r \geq 0)$ and $z_{t-r} \in \boldsymbol{E}^{4/3}$ in \eqref{6lem07}, it is easy to check that assumption $iii)$ of Lemma \ref{lem28} is satisfied.

Let $T_0 \in \mathbb{R}$ be arbitrary fixed. Recall that the family $\widehat{B}=\{B(t)\}_{t \in \mathbb{R}}$ is pullback $\mathcal{D}$-abosorbing and $G_{-2/3}(\cdot) \in \mathfrak{I}$, we can choose $r_{\ast}:=r_{\ast}(T_0)>0$ large enough such that 
\begin{equation}
	U(t,t-r)B(t-r) \subseteq B(t),  \quad \forall r \geq r_{\ast}, \ t\leq T_0, 
\end{equation}
and the following estimates hold ture
\begin{equation}
	\gamma(r_{\ast})=Ce^{-\frac{\delta_1 r_{\ast}}{2}}<1,
\end{equation}
\begin{equation}
	\beta(t,r_{\ast})=Ce^{-\frac{\delta_2 r_{\ast}}{2}}e^{C(G^4_{-2/3}(t)+1)} \leq \beta(T_0,r_{\ast})=Ce^{-\frac{\delta_2 r_{\ast}}{2}}e^{C(G^4_{-2/3}(T_0)+1)} <1, \quad  \forall t \leq T_0,
\end{equation}
\begin{equation}\label{6lem11}
	\frac{1}{1-Ce^{-\frac{\delta_2 r_{\ast}}{2}}e^{C(G^4_{-2/3}(t)+1)} }\leq 2, \quad  \forall t \leq T_0.
\end{equation}
From \eqref{28lem01}, setting
\begin{equation*}
	\begin{aligned}
		\rho_{r_{\ast}}(t) & =  \frac{1+\beta(t,0)-\beta(t,r_{\ast})}{1-\beta(t,r_{\ast})} \alpha(t,r_{\ast})\\
		&=\frac{C\left( 1+e^{C(G^4_{-2/3}(t)+1)}-e^{-\frac{\delta_2 r_{\ast}}{2}}e^{C(G^4_{-2/3}(t)+1)}\right)}{1-Ce^{-\frac{\delta_2 r_{\ast}}{2}}e^{C(G^4_{-2/3}(t)+1)} } \alpha(t,r_{\ast}),
	\end{aligned}
\end{equation*}
and 
\begin{equation*}
	Q_{r_{\ast}}=\frac{\gamma(0)}{\gamma(r_{\ast})}=e^{\frac{\delta_2 r_{\ast}}{2}}.
\end{equation*}
By \eqref{6lem11} and $\alpha(\cdot,\cdot) \in \widehat{\mathfrak{I}}$, it follows that
\begin{equation}\label{6lem12}
	\rho_{r_{\ast}}(t) \leq \hat{\rho}_{r_{\ast}}(t) \leq \hat{\rho}_{r_{\ast}}(T_0) , \quad \forall t \leq T_0,
\end{equation}
where
\begin{equation*}
	\hat{\rho}_{r_{\ast}}(t)=C(1+e^{C(G^4_{-2/3}(t)+1)}) \alpha(t,r_{\ast}),
\end{equation*}
for some $C>0$ large enough.

From the invariant property of the pullback $\mathcal{D}$-attractor $\mathcal{A}=\{A(t):t\in \mathbb{R}\}$ and recall that $A(t) \subseteq B(t) (\forall t \in \mathbb{R})$, it deduces that
\begin{equation}\label{6lem13}
	A(t)=U(t,t-r)A(t-r) \subseteq U(t,t-r)B(t-r), \quad \forall t \in \mathbb{R}, \ \forall r \geq 0.
\end{equation}
Then, combining \eqref{6lem12} and \eqref{6lem13}, Lemma \ref{lem28} yields for each $t \leq T_0$, 
\begin{equation*}
\begin{aligned}
	&dist_{\mathbb{K}^1}(A(t),\overline{\mathbf{B}}_{\mathbb{K}^{4/3}}(\hat{\rho}_{r_{\ast}}(t))) \\
	&\leq dist_{\mathbb{K}^1}(U(t,t-r)D(t-r),\overline{\mathbf{B}}_{\mathbb{K}^{4/3}}(\hat{\rho}_{r_{\ast}}(t)))\\
	&\leq Q_{r_{\ast}}(\gamma(r_{\ast}))^{\frac{r}{r_{\ast}}}R_0(t-r)\\
	&\leq Ce^{-\frac{\delta_2(r-r_{\ast})}{2}}R_0(t-r)\\
	&\leq C	e^{-\frac{\delta_2(r-r_{\ast})}{2}}(G_{-1}(t-r)+1) \to0, \quad \text{as} \ r \to +\infty.
\end{aligned}
\end{equation*}
Thus
\begin{equation*}
	A(t) \subseteq \overline{\mathbf{B}}_{\mathbb{K}^{4/3}}(\hat{\rho}_{r_{\ast}}(t)) \subseteq \overline{\mathbf{B}}_{\mathbb{K}^{4/3}}(\hat{\rho}_{r_{\ast}}(T_0)), \quad \forall t \leq T_0.
\end{equation*}
The proof of the lemma is thus finished.
\end{proof}
\begin{Theorem}\label{Th42}
Let assumptions $\left(\mathbf{H1}\right)-\left(\mathbf{H3}\right)$ be satisfied. Assume $g \in L^2_{loc}(\mathbb{R}; \boldsymbol{E}^{0})$ and \eqref{5lem11} holds true. Then\\
$i)$ If $g(t)$ satisfies
	\begin{equation}\label{7lem11}
		\widehat{G}(t):=\left(\int_{-\infty}^{t} e^{\sigma_3s} \|g(s)\|^2 {\rm d}s \right)^{\frac{1}{2}} <+\infty, \quad  \forall t \in \mathbb{R},
	\end{equation}
where $0<\sigma_3 \leq \delta_3$, then, the pullback $\mathcal{D}$-attractor $\mathcal{A} =\{A(t):t \in \mathbb{R}\} $ given by Theorem \ref{Th36} satisfies that
	\begin{equation}\label{7lem09}
		A(t) \ \text{is bounded in}\ \mathbb{K}^{2} \text{, for any }\ t \in \mathbb{R}.
	\end{equation}
$ii)$ If $g(t)$ satisfies
\begin{equation}\label{7lem12}
	G(t):=\sup_{\tau \leq t} \left( \int_{\tau -1}^{\tau} \|g(s)\|^2 {\rm d}\right)^{\frac{1}{2}} \leq +\infty, \quad \forall t\in \mathbb{R},
\end{equation}
then,
	\begin{equation}\label{7lem10}
		\bigcup_{s \leq t} A(s) \ \text{is bounded in }\ \mathbb{K}^{2} \text{, for any }\ t \in \mathbb{R}.
	\end{equation}
\end{Theorem}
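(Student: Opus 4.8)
\textbf{Proof plan for Theorem \ref{Th42}.}

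The strategy is to bootstrap the regularity obtained in Lemma \ref{L41} (boundedness of the attractor in $\mathbb{K}^{4/3}$) one more step to reach $\mathbb{K}^2$, again through the splitting $U(t,\tau)=S_{x_\tau}(t,\tau)+P_{x_\tau}(t,\tau)$ of \eqref{ine04}--\eqref{ine05} and an application of Lemma \ref{lem28}, but now with $\mathcal{H}=\mathbb{K}^1$ and $\mathcal{V}=\mathbb{K}^2$. The decaying part $S_{x_\tau}$ is handled exactly as in \eqref{6lem10}: the estimate $\|S_{x_\tau}(t,\tau)y_\tau\|_{\mathbb{K}^1}\le \gamma(t-\tau)\|y_\tau\|_{\mathbb{K}^1}$ with $\gamma(t-\tau)=Ce^{-\frac{\delta_1}{2}(t-\tau)}$ gives assumptions $i)$ and $ii)$ of Lemma \ref{lem28} for free. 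The real work is the $\mathbb{K}^2$-estimate for $P_{x_\tau}(t,\tau)z_\tau=(w(t),w_t)$. First I would multiply the $w$-equation by $w$ (and by $Aw$) to control $\int_{t-h}^t\|w(s)\|^2\,{\rm d}s$ and $\|w(\tau)\|$-type terms in $\mathbb{K}^{4/3}$, reusing \eqref{6lem01}; then I would multiply the first equation in \eqref{ine05} by $A w$ and integrate over $\Omega$, obtaining
\begin{equation*}
	\frac{1}{2}\frac{d}{dt}\bigl(\|A^{1/2}w\|^2+\|Aw\|^2\bigr)+\|Aw\|^2\le \langle \varPhi(t),Aw\rangle.
\end{equation*}

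The key point is to estimate $\langle \varPhi(t),Aw\rangle$ using $\varPhi=b(t,w_t)+g(t)-f_0(u)+f_0(v)-f_1(u)$. The linear-in-$w$ terms $\langle b(t,w_t),Aw\rangle$, $\langle g(t),Aw\rangle=\langle A^{-1/2}g,A^{3/2}w\rangle$ and $\langle f_1(u),Aw\rangle$ are absorbed via Young's inequality with $\tfrac14\|Aw\|^2$, now that $g\in L^2_{loc}(\mathbb{R};\boldsymbol{E}^0)$ provides the required $\|g(t)\|$ control. For the critical term I would again split $f_0(u)-f_0(v)$ using $u=\hat v+\hat w=v+w$ in the spirit of Lemma \ref{L41}: write it as a sum of a piece bounded by $C(|\hat v|^4+|v|^4)|w|$ and a piece bounded by $C(|u|+|v|)|\hat w|^4$, then use the Sobolev embeddings $\boldsymbol{E}^1\hookrightarrow L^6$, $\boldsymbol{E}^{2}\hookrightarrow L^\infty$ (valid in $\mathbb{R}^3$) together with $\|\hat w\|_{4/3}$-bounds from Lemma \ref{L41} and the $\mathbb{K}^{4/3}$-boundedness of the attractor, to arrive at a differential inequality of the form
\begin{equation*}
	\frac{d}{dt}\bigl(\|w(t)\|^2_{1}+\|w(t)\|^2_{2}\bigr)\le \bigl(\tilde h(t,t-\tau)-\delta_4\bigr)\bigl(\|w(t)\|^2_{1}+\|w(t)\|^2_{2}\bigr)+C\bigl(\tilde Q(t,t-\tau)+\|g(t)\|^2+\|b(t,w_t)\|^2+1\bigr),
\end{equation*}
with $\tilde h\in\widehat{\mathfrak{I}}$ exponentially decaying in its second argument and $\tilde Q\in\widehat{\mathfrak{I}}$.

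Then Gronwall's inequality, together with (II), (IV) of $(\mathbf{H3})$, \eqref{6lem01} and the hypothesis on $g$, yields $\|P_{x_\tau}(t,\tau)z_\tau\|_{\mathbb{K}^2}\le \beta(t,t-\tau)\|z_\tau\|_{\mathbb{K}^2}+\alpha(t,t-\tau)$ with $\beta\in\widehat{\mathfrak{D}}$, $\alpha\in\widehat{\mathfrak{I}}$, exactly as in the passage \eqref{6lem02}--\eqref{6lem07}; this gives assumption $iii)$ of Lemma \ref{lem28}. Applying Lemma \ref{lem28} with $\mathcal{V}=\mathbb{K}^2$ and then invariance $A(t)=U(t,t-r)A(t-r)\subseteq U(t,t-r)B(t-r)$ as in \eqref{6lem13}, one obtains $dist_{\mathbb{K}^1}(A(t),\overline{\mathbf{B}}_{\mathbb{K}^2}(\hat\rho_{r_\ast}(t)))\to 0$ as $r\to\infty$, hence $A(t)\subseteq\overline{\mathbf{B}}_{\mathbb{K}^2}(\hat\rho_{r_\ast}(t))$. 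For part $ii)$, the $\sup$-type bound \eqref{7lem12} makes $\hat\rho_{r_\ast}(\cdot)$ an increasing function, so $A(t)\subseteq\overline{\mathbf{B}}_{\mathbb{K}^2}(\hat\rho_{r_\ast}(T_0))$ for all $t\le T_0$, giving $\bigcup_{s\le t}A(s)$ bounded in $\mathbb{K}^2$; for part $i)$, the integrated condition \eqref{7lem11} only controls each fixed fibre, giving $A(t)$ bounded in $\mathbb{K}^2$ for each $t$. The main obstacle I anticipate is controlling the critical nonlinearity in the $\|Aw\|$-estimate: passing from the $\mathbb{K}^{4/3}$-level (where $\boldsymbol{E}^{4/3}\hookrightarrow L^{18}$ was exactly enough) to the $\mathbb{K}^2$-level requires the endpoint embedding $\boldsymbol{E}^2\hookrightarrow L^\infty$ in dimension three and careful bookkeeping so that the factor multiplying $\|w\|^2_2$ remains an exponentially-decaying-in-time $\widehat{\mathfrak{I}}$ function (otherwise the Gronwall exponent blows up); this is where the $\mathbb{K}^{4/3}$-bound on the attractor from Lemma \ref{L41}, rather than the mere $\mathbb{K}^1$-bound, is essential.
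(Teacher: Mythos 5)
Your overall architecture (split off an exponentially decaying part, estimate the remainder in $\mathbb{K}^2$, then use invariance of the attractor) is the right shape, but the specific estimate you propose for the critical term does not close, and this is exactly the point where the paper changes strategy. You keep the decomposition \eqref{ine04}--\eqref{ine05}, so the $w$-equation still carries the difference $f_0(u)-f_0(v)$, and you propose to bound $\langle f_0(u)-f_0(v),Aw\rangle$ by splitting it into pieces like $C(|\hat v|^4+|v|^4)|w|$ and $C(|u|+|v|)|\hat w|^4$ and using $\boldsymbol{E}^1\hookrightarrow L^6$, $\boldsymbol{E}^2\hookrightarrow L^\infty$. But $\hat v$ and $v$ are only controlled in $H_0^1$ (their equations have data merely in $\mathcal{H}=\mathbb{K}^1$), so $|\hat v|^4,|v|^4\in L^{3/2}$ at best; even with $w\in L^\infty$ this puts $|v|^4|w|$ in $L^{3/2}$, which cannot be paired with $Aw\in L^2$, and H\"older in any other arrangement needs either $v\in L^8$ or $Aw\in L^3$, neither of which is available. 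The $L^{18}$--$L^{18/5}$ interpolation that made the analogous term work against $A^{1/3}w$ in Lemma \ref{L41} has no counterpart against $Aw$: this is precisely why the paper only gains the fractional step $\mathbb{K}^1\to\mathbb{K}^{4/3}$ by that route and cannot iterate it to $\mathbb{K}^2$. A second, related problem is your plan to reapply Lemma \ref{lem28} with $\mathcal{V}=\mathbb{K}^2$: its hypothesis $iii)$ must hold for trajectories starting in the absorbing family $B(t-r)$, which is only bounded in $\mathbb{Y}_1$, so the $\mathbb{K}^{4/3}$-bound of Lemma \ref{L41} (which concerns the attractor, not $B$) is not available for the solutions $u$ entering those estimates.

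The paper's proof of Theorem \ref{Th42} avoids both issues by changing the decomposition: for data $x_\tau\in A(\tau)$ it writes $U(t,\tau)x_\tau=Y(t,\tau)x_\tau+Z(t,\tau)x_\tau$, where $v$ solves the purely linear equation \eqref{7ine01} (so $\|Y(t,\tau)x_\tau\|_{\mathbb{K}^1}\le Ce^{-\frac{\delta_1}{2}(t-\tau)}\|v_\tau\|_1$ trivially), and the \emph{entire} nonlinearity $f(u)$ is put into the $w$-equation \eqref{7ine02} as part of the forcing $\Psi=g+b(t,w_t)-f(u)$. Because $u(t)$ stays on the attractor, Lemma \ref{L41} and the embedding $\boldsymbol{E}^{4/3}\hookrightarrow L^{10}(\Omega)$ together with $|f(u)|\le C(1+|u|^5)$ make $f(u)$ a uniformly bounded $L^2$-forcing, $\|f(u)\|\le C(1+\hat{\rho}^5_{r_*}(T_0))$, so the $Aw$-multiplier estimate is essentially linear: one gets \eqref{7lem08} with constant coefficient $\delta_3$, and a weighted-in-time integration (using \eqref{7lem11} or \eqref{7lem12}) bounds $\|w(t)\|_2$. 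The conclusion then follows from the exponential decay of $Y$ and invariance $A(t)=U(t,t-r)A(t-r)$ directly, with no second application of Lemma \ref{lem28}. If you want to repair your plan, adopt this step: restrict to initial data on the attractor, use the already-proved $\mathbb{K}^{4/3}$-bound to subsume $f(u)$ into the forcing, and drop the $f_0$/$f_1$ splitting at this stage.
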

\begin{proof}
	We split the solution $U(t,\tau)x_{\tau}=(u(t),u_t)$ with $x_{\tau} \in A(\tau)$ into the sum
	\begin{equation*}
		U(t,\tau)x_{\tau}=Y(t,\tau)x_{\tau}+Z(t,\tau)x_{\tau},
	\end{equation*}
	where $Y(t,\tau)x_{\tau}=(v(t),0)$ and $Z(t,\tau)x_{\tau}=(w(t),w_t)=(w(t),u_t)$ solve the following probelms respectively
	\begin{equation}\label{7ine01}
		\begin{cases}
			\partial_t v +A \partial_t v + A v =0 & in \  (\tau,\infty)\times\Omega,\\
			v(t,x)=0 & on \ (\tau,\infty)\times\partial\Omega,\\
			v (\tau,x)=v_{\tau}=u^0&\tau \in \mathbb{R},x \in \Omega,\\
			v(\tau+\theta,x)=0 & \theta \in(-h,0),x \in \Omega,
		\end{cases}
	\end{equation}
	and 
	\begin{equation}\label{7ine02}
		\begin{cases}
			\partial_t w +A \partial_t w + A w =\Psi & in \  (\tau,\infty)\times\Omega,\\
			w(t,x)=0 & on \ (\tau,\infty)\times\partial\Omega,\\
			w (\tau,x)=0&\tau \in \mathbb{R},x \in \Omega,\\
			w(\tau+\theta,x)=\phi(\theta,x)  & \theta \in(-h,0),x \in \Omega,
		\end{cases}
	\end{equation}
	where
	\begin{equation*}
		\Psi(t)=g(t)+b(t,w_t)-f(u).
	\end{equation*}
Applying the same argument as equations \eqref{4lem01} and \eqref{6lem10}, we can estimate the solution  of \eqref{7ine01} as follows
\begin{equation}\label{7lem05}
	\| Y(t,\tau) x_{\tau} \|_{\mathbb{K}^1} \leq C e^{-\frac{\delta_1}{2}(t-\tau)} \| v_{\tau} \|_1.
\end{equation}
Multiplying the first equation in \eqref{7ine02} by $w$ and integrating it over $\Omega$, we have
	\begin{equation}
	\frac{1}{2} \frac{d}{dt}\left( \| w \|^2 +\|  w \|^2_1 \right)+ \|  \hat{w} \|^2_1 \leq \langle \hat{\Psi}(t), w \rangle.
\end{equation}
In light of \eqref{1eq01} and the following Sobolev embedding
\begin{equation*}
	\boldsymbol{E}^1 \hookrightarrow L^6(\Omega),
\end{equation*}
we deduce that 
\begin{equation*}
\begin{aligned}
	&| \langle \Psi(t), w \rangle|  \\
	&\leq |\langle b(t,w_t),w \rangle |+| \langle g(t), w \rangle |+| \langle f(u), w \rangle |\\
	&\leq \left\| g(t) \right\|_{-1}\left\|w \right\|_1+\left\| b(t,w_t) \right\|\left\| w \right\|  + C\int_{\Omega}(1+\left| u \right|^5 ) \left| w\right| {\rm d}x  \\
	& \leq \left\| g(t) \right\|^2_{-1}+\frac{1}{4} \left\| w \right\|^2_1+C\left\| b(t,w_t) \right\|^2 +\frac{1}{2} \left\| w \right\|^2 +C(1+\left\| u \right\|^5_{L^6})+\frac{1}{4} \left\| w \right\|^2_{L^6} \\
	&\leq \left\| g(t) \right\|^2_{-1}+\left\| b(t,w_t) \right\|^2+C\left( \left\| u \right\|^{10}_{1}+1\right)+\left( \frac{1}{2}+\frac{1}{2\lambda_1}\right)\left\| w \right\|^2_1.
\end{aligned}
\end{equation*}
Similar to \eqref{5lem07}, we deduce that
\begin{equation}
	\int_{t-h}^{t}\| \nabla w(s) \|^2 {\rm d}s \leq C\left( t-\tau +1\right) \left( G^{30}_{-1}(t)+1\right).
\end{equation}
Multiplying the first equation in \eqref{7ine02} by $Aw$ and integrating it over $\Omega$, we have
\begin{equation}\label{7lem01}
	\frac{1}{2} \frac{d}{dt}(\| w(t) \|^2_1+\|w(t)\|^2_2)+\|w \|^2_2 = \langle \Psi(t) ,Aw \rangle.
\end{equation}
Using \eqref{1eq01} and the Sobolev embeddings $\boldsymbol{E}^{4/3} \hookrightarrow \boldsymbol{E}^{6/5} \hookrightarrow L^{10}(\Omega)$, we find that
	\begin{align}\label{7lem02}
		| \langle \Psi(t) ,Aw \rangle | &\leq | \langle g(t), Aw \rangle |+ | \langle b(t,w_t), Aw \rangle |+ | \langle f(u),Aw \rangle | \nonumber \\
		&\leq \| g(t)\| \|w\|_2+ \|b(t,w_t) \| \| w\|_2+C \int_{\Omega} (1+|u|^5) |Aw| {\rm d}x \nonumber \\
		&\leq  \| g(t)\| \|w\|_2+ \|b(t,w_t) \| \| w\|_2+C(1+\|u\|^5_{L^{10}}) \|w\|_2 \nonumber \\
		& \leq \frac{1}{2}\|w\|^2_2+C(\|u\|^{10}_{4/3}+\|b(t,w_t)\|^2+\|g(t)\|^2+1).
	\end{align}  
Let $T_0 \in \mathbb{R}$ be  arbitrary fixed. By the invariant property of $\mathcal{A}=\{A(t):t\in \mathbb{R}\}$ and $x_{\tau}=(u^0,\phi) \in A(\tau)$, using Lemma \ref{L41}, we have
\begin{equation}\label{7lem03}
	(u(t),u_t) \in A(t) \subseteq \overline{\mathbf{B}}_{\mathbb{K}^{4/3}}(\hat{\rho}_{r_{\ast}}(T_0)), \quad \forall t \leq T_0.
\end{equation}
From \eqref{7lem01}-\eqref{7lem02}, there exists constant $\delta_3$ such that
\begin{equation}\label{7lem08}
	\frac{d}{dt}(\| w(t) \|^2_1+\|w(t)\|^2_2)+\delta_3 (\| w(t) \|^2_1+\|w(t)\|^2_2) \leq C(\hat{\rho}^{10}_{r_{\ast}}(T_0)+\|b(t,w_t)\|^2+\|g(t)\|^2+1).
\end{equation}
Multiplying \eqref{7lem08} by $e^{\sigma_3 t}$, consequently,
\begin{equation*}
	\begin{aligned}
		&\frac{d}{dt}e^{\sigma_3 t}(\| w(t) \|^2_1+\|w(t)\|^2_2) \\
		&\leq -(\delta_3-\sigma_3)e^{\sigma_3 t}(\| w(t) \|^2_1+\|w(t)\|^2_2)+Ce^{\sigma_3 t}(\hat{\rho}^{10}_{r_{\ast}}(T_0)+\|b(t,w_t)\|^2+\|g(t)\|^2+1),
	\end{aligned}
\end{equation*}
where $\sigma_3 \leq \delta_3$.\\
Integrating the above inequality from $\tau$ to $t$ and noting that $w(\tau)=0$, it follows that
\begin{equation}\label{7lem04}
		\| w(t) \|^2_1+\|w(t)\|^2_2\leq C e^{-\sigma_3t} \int_{\tau}^{t} e^{\sigma_3 s}(\hat{\rho}^{10}_{r_{\ast}}(T_0)+\|b(t,w_t)\|^2+\|g(t)\|^2+1) {\rm d}s .
\end{equation}
If $g(t)$ satisfies \eqref{7lem11}, (\uppercase \expandafter{\romannumeral 2}) and (\uppercase \expandafter{\romannumeral 5}), then \eqref{7lem04} yields
\begin{equation}
	\| w(t) \|^2_1+\|w(t)\|^2_2 \leq C(\hat{\rho}^{10}_{r_{\ast}}(T_0)+e^{-\sigma_3 t}\widehat{G}^2(t) +\int_{\tau -h}^{t} \|w(s)\|^2 {\rm d}s),
\end{equation}
which implies that 
\begin{equation}\label{7lem06}
	\| Z(t,\tau)x_{\tau} \|_2 \leq \rho_1(t), \quad \forall \tau \leq t \leq T_0,
\end{equation}
where $\rho_1(t)=C(\hat{\rho}^{10}_{r_{\ast}}(T_0)+e^{-\sigma_3 t}\widehat{G}^2(t) +\int_{\tau -h}^{t} \|w(s)\|^2 {\rm d}s)$.

For any $t \leq T_0$, combining \eqref{7lem05} and \eqref{7lem06}, we obtain
\begin{equation*}
	\begin{aligned}
		&dist_{\mathbb{K}^1}(A(t),\overline{\mathbf{B}}_{\mathbb{K}^2}(\rho_1(t))) \\
		&= dist_{\mathbb{K}^1}(U(t,t-r)A(t-r),\overline{\mathbf{B}}_{\mathbb{K}^2}(\rho_1(t))) \\
		&\leq Ce^{-\delta_1 r}\| v_{\tau} \|_1 \to 0, \quad \text{as} \ r\to +\infty.
	\end{aligned}
\end{equation*}
Hence 
\begin{equation*}
	A(t) \subseteq \overline{\mathbf{B}}_{\mathbb{K}^2}(\rho_1(t)), \quad \forall t \leq T_0,
\end{equation*}
which implies \eqref{7lem09}.

On the other hand, if $g(t)$ satisfies \eqref{7lem12}, from \eqref{7lem04}, we infer
\begin{equation}
	\begin{aligned}
		&\| w(t) \|^2_1+\|w(t)\|^2_2 \\
		&\leq C e^{-\eta t} \int_{\tau}^{t} e^{\eta s}(\hat{\rho}^{10}_{r_{\ast}}(T_0)+\|b(t,w_t)\|^2+\|g(t)\|^2+1) {\rm d}s \\
		& \leq C(\hat{\rho}^{10}_{r_{\ast}}(T_0)+G^2(t)+\int_{\tau -h}^{t} \|w(s)\|^2 {\rm d}s).
	\end{aligned}
\end{equation}
Consequently,
\begin{equation}\label{7lem07}
		\| Z(t,\tau)x_{\tau} \|_2 \leq \rho_2(T_0), \quad \forall \tau \leq t \leq T_0,
\end{equation}
where $\rho_2(T_0)=C(\hat{\rho}^{10}_{r_{\ast}}(T_0)+G^2(t)+\int_{\tau -h}^{t} \|w(s)\|^2 {\rm d}s)$.

Owing to \eqref{7lem05} and \eqref{7lem07}, for every $t \leq T_0$, we conclude that
\begin{equation}
	\begin{aligned}
		&dist_{\mathbb{K}^1}(A(t),\overline{\mathbf{B}}_{\mathbb{K}^2}(\rho_2(T_0))) \\
		&= dist_{\mathbb{K}^1}(U(t,t-r)A(t-r),\overline{\mathbf{B}}_{\mathbb{K}^2}(\rho_2(T_0))) \\
		&\leq Ce^{-\delta_1 r}\| v_{\tau} \|_1 \to 0, \quad \text{as} \ r\to +\infty.
	\end{aligned}
\end{equation}
Hence 
\begin{equation}
	A(t) \subseteq \overline{\mathbf{B}}_{\mathbb{K}^2}(\rho_2(t)), \quad \forall t \leq T_0,
\end{equation}
which proves \eqref{7lem10}.
\end{proof}

\section{Declarations}
\textbf{Ethical Approval}

Not applicable.

\textbf{Competing interests}

We declare that this work is original and has not been published previously. No conflict of interes exists in the submission of this manuscript, and it is approved by all authors for publication.

\textbf{Authors' contributions}

Equal contributions by all authors. All authors read and approved the final manuscript.

\textbf{Funding}

Qin, Cai and Wang were supported by the National Natural Science Foundation of China with contract number 12171082, the fundamental research funds for the central universities with contract numbers 2232022G-13,2232023G-13 and by a grant from science and technology commission of Shanghai municipality. Mei was supported  by NSERC Grant RGPIN 2022-03374. 

\textbf{Availability of data and materials }

Data sharing not applicable to this article as no datasets were generated or analysed during the current study.

\end{document}